\documentclass[11pt]{article}
\usepackage{amsmath}
\usepackage{amsfonts}
\usepackage{amssymb}
\usepackage{amsthm}
\usepackage{graphicx}
\usepackage{empheq}
\usepackage{indentfirst}
\usepackage{cite} 
\usepackage{mathrsfs}
\usepackage{cases}
\usepackage{graphics}
\usepackage{xcolor}  
\usepackage{bm}
\usepackage{makeidx}
\usepackage[T1]{fontenc}
\textwidth=15.5cm
\textheight=23cm
\topmargin=-1cm
\oddsidemargin=0cm
\flushbottom
\newtheorem{theorem}{\textbf{Theorem}}[section]
\newtheorem{lemma}{\textbf{Lemma}}[section]
\newtheorem{proposition}{\textbf{Proposition}}[section]
\newtheorem{corollary}{\textbf{Corollary}}[section]
\newtheorem{remark}{\textbf{Remark}}[section]
\newtheorem{definition}{\textbf{Definition}}[section]
%
\allowdisplaybreaks[4] 
%
%
\def\be{\begin{equation}}
\def\ee{\end{equation}}
\def\bea{\begin{eqnarray}}
\def\eea{\end{eqnarray}}
\def\bt{\begin{theorem}}
\def\et{\end{theorem}}
\def\bl{\begin{lemma}}
\def\el{\end{lemma}}
\def\br{\begin{remark}}
\def\er{\end{remark}}
\def\bp{\begin{proposition}}
\def\ep{\end{proposition}}
\def\bc{\begin{corollary}}
\def\ec{\end{corollary}}
\def\bd{\begin{definition}}
\def\ed{\end{definition}}
\def\vp{\varphi}
\def\non{\nonumber}
%


\begin{document}

\title{Well-posedness and Long-time Behavior of \\
a Bulk-surface Coupled Cahn-Hilliard-diffusion System \\
with Singular Potential for Lipid Raft Formation}

\author{
Hao Wu
\footnote{
Corresponding author.
School of Mathematical Sciences,
Fudan University,
Handan Road 220, Shanghai 200433, P.R. China.
Email: \texttt{haowufd@fudan.edu.cn}
}
\ \
and
\ \
Shengqin Xu
\footnote{
School of Mathematical Sciences,
Fudan University,
Handan Road 220, Shanghai 200433, P.R. China.
Email: \texttt{20110840009@fudan.edu.cn}
}
}
\date{\today}
\maketitle


\begin{center}
\textit{Dedicated to Prof. Pierluigi Colli on the occasion of his 65th birthday \\
with friendship and admiration.} \medskip
\end{center}


\begin{abstract}
\noindent We study a bulk-surface coupled system that describes the processes of lipid-phase separation and lipid-cholesterol interaction on cell membranes, in which cholesterol exchange between cytosol and cell membrane is also incorporated. The PDE system consists of a surface Cahn-Hilliard equation for the relative concentration of saturated/unsaturated lipids and a surface diffusion-reaction equation for the cholesterol concentration on the membrane, together with a diffusion equation for the cytosolic cholesterol concentration in the bulk. The detailed coupling between bulk and surface evolutions is characterized by a mass exchange term $q$. For the system with a physically relevant singular potential, we first prove the existence, uniqueness and regularity of global weak solutions to the full bulk-surface coupled system under suitable assumptions on the initial data and the mass exchange term $q$. Next, we investigate the large cytosolic diffusion limit that gives a reduction of the full bulk-surface coupled system to a system of surface equations with non-local contributions. Afterwards, we study the long-time behavior of global solutions in two categories, i.e., the equilibrium and non-equilibrium models according to different choices of the mass exchange term $q$. For the full bulk-surface coupled system with a decreasing total free energy, we prove that every global weak solution converges to a single equilibrium as $t\to +\infty$. For the reduced surface system with a mass exchange term of reaction type, we establish the existence of a global attractor.
\medskip \\
\noindent
\textbf{Keywords}: Phase separation, Lipid raft formation, Bulk-surface coupling, Diffusion-reaction, Singular potential, Well-posedness, Long-time behavior.
\medskip \\
\noindent
\textbf{MSC 2010}: 35A01, 35A02, 35K35, 35Q92.
\end{abstract}

\section{Introduction}
\setcounter{equation}{0}
\noindent
Let $\Omega \subset\mathbb{R}^3$ be a bounded domain with smooth boundary $\Gamma:=\partial\Omega$ and $T>0$. We consider the following bulk-surface coupled system on the formation of lipid rafts \cite{GKRR16,AK20}:
\begin{subequations}
	\begin{alignat}{3}
	& \partial_t u=D\Delta u,
    & \qquad\qquad  \text{in}\ \Omega\times(0,T), \label{1.a}\\
    & D\partial_{\bm{n}} u=-q,
    &  \qquad\qquad\text{on}\ \Gamma\times(0,T), \label{1.b}\\
    &\partial_t\varphi =\Delta_\Gamma \mu,
    &  \qquad\qquad\text{on}\ \Gamma\times(0,T), \label{1.c}\\
    &\mu=-\epsilon \Delta_\Gamma \varphi +\frac{1}{\epsilon}W'(\varphi)-\frac{1}{2}\eta,
    &  \qquad\qquad\text{on}\ \Gamma\times(0,T),\label{1.d}\\
    &\partial_t v=\Delta_\Gamma \eta +q,
    &  \qquad\qquad\text{on}\ \Gamma\times(0,T), \label{1.e}\\
    &\eta=\frac{4}{\delta}\left(v-\frac{1+\varphi}{2}\right),
    &  \qquad\qquad\text{on}\ \Gamma\times(0,T).\label{1.f}
	\end{alignat}
\end{subequations}
This system is subject to the initial conditions
\begin{subequations}
\begin{alignat}{3}
&u|_{t=0}=u_{0}(x),&\qquad\qquad \text{in}\ \Omega,\label{ini1}\\
&\varphi|_{t=0}=\varphi_{0}(x), \quad  v|_{t=0}=v_{0}(x), &\qquad \qquad \text{on}\ \Gamma.
\label{ini2}
\end{alignat}
\end{subequations}
In \eqref{1.a}, the positive constant $D$  denotes the diffusivity coefficient. In \eqref{1.b}, $\bm{n}=\bm{n}(x)$ denotes the unit outward normal vector on $\Gamma$, and $\partial_{\bm{n}}$ denotes the outer normal derivative on $\Gamma$ such that $\partial_{\bm{n}}u=\nabla u \cdot\bm{n}$. In \eqref{1.c}--\eqref{1.e}, $\Delta_\Gamma$ denotes the Laplace-Beltrami operator on $\Gamma$.

Lipid rafts represent heterogeneous, nanometer-sized microdomains of specific lipid compositions on cell membranes \cite{Pi06}, which have been linked to a wide range of cellular functions like membrane trafficking, signal transduction and protein sorting \cite{BL00,LS10}. Several phenomenological models have been proposed to understand the dynamics and processes governing the formation and maintenance of lipid rafts \cite{EDF13,FSH10,Fo05,GSR08}. It was argued that the competition between lipid phase separation and recycling of raft components is of major importance for the dynamics and structure of lipid rafts. Moreover, different from other phase separation processes, one striking feature in the formation of lipid rafts is the emergence of microdomains with a length-scale below the system size \cite{FSH10}.

The system \eqref{1.a}--\eqref{1.f} under investigation was derived in \cite[Section 2]{GKRR16} based on thermodynamical conservation laws and free energy inequalities for coupled bulk and surface processes. From the modelling point of view, $\Omega$ represents the cell and $\Gamma$ represents its outer membrane, respectively.
The bulk function $u:\Omega\times(0,T)\to \mathbb{R}$ denotes the relative concentration of cytosolic cholesterol in the cell. Its dynamics is governed by the equation \eqref{1.a} that represents a simple diffusion process. The equation \eqref{1.b} characterizes the outflow of cholesterol on the boundary $\Gamma$ and it indicates  possible mass exchange between bulk and surface. Next, the surface function $\varphi:\Gamma\times(0,T)\to [-1,1]$ denotes a rescaled relative concentration of saturated/unsaturated lipid molecules on the cell membrane. The values $\varphi=1$ and $\varphi=-1$ represent the pure saturated-lipid and pure unsaturated-lipid phases, respectively. The lipid phase separation process on the cell membrane is modelled within the diffuse-interface framework, i.e., by the surface Cahn-Hilliard eqaution \eqref{1.c}--\eqref{1.d}. The Cahn-Hilliard equation is a classical phase-field model that gives a continuous description of phase separation process for binary mixtures \cite{CH}. In recent years, the surface Cahn-Hilliard equation and its variants have been used to study phase separation on lipid membranes \cite{RV06,ES10a,ES10b}.
It was shown that the surface Cahn-Hilliard equation can successfully simulate the formation of lipid microdomains \cite{WBV12,YQMO19}. See also \cite{ZWQ21} for the quantitative comparison of the surface Cahn-Hilliard equation with experimental data.
Finally, the surface function $v:\Gamma\times(0,T)\to \mathbb{R}$  denotes the relative concentration of membrane-bound cholesterol, where $v=1$ indicates the maximal saturation. Dynamics of cholesterol on the cell membrane is given by the equations \eqref{1.e}--\eqref{1.f}, which combines a mass-preserving relaxation of the interaction energy and a mass exchange with the bulk reservoir of cholesterol given by the flux from the cytosol. As a consequence, the equation \eqref{1.e} presents cross-diffusion contributions (for $\vp$ and $v$) as well as a mass source term  (cf. \eqref{1.b}).

The equations \eqref{1.a}--\eqref{1.c} and \eqref{1.e} represent the mass balance for the cytosolic cholesterol concentration $u$ in the bulk $\Omega$, and mass balance relations for the relative lipid concentration $\varphi$ as well as the cholesterol concentration $v$ on the surface $\Gamma$. In particular, the specific forms of \eqref{1.b} and \eqref{1.e} indicate that the outflow of cholesterol from the cytosol serves as a source-term in the membrane-cholesterol \cite{GKRR16}. By integration over $\Omega$ and $\Gamma$, we easily find the following properties \cite{AK20}:
\begin{align}
&\int_\Omega u(x,t)\,\mathrm{d}x+ \int_\Gamma v(x,t)\,\mathrm{d}S= \int_\Omega u_0(x)\,\mathrm{d}x+ \int_\Gamma v_0(x)\,\mathrm{d}S,\qquad \forall\, t\in[0,T],\label{mass1}\\
&\int_\Gamma \varphi(x,t)\,\mathrm{d}S=\int_\Gamma \varphi_0(x)\,\mathrm{d}S,\qquad \forall\, t\in[0,T].\label{mass2}
\end{align}
The identity \eqref{mass1} implies that the combined total mass of cytosolic and surface cholesterol is conserved, while \eqref{mass2} gives the mass conversion of lipids on the membrane.

From the energetic point of view, the surface free energy that describes the lipid phase separation and lipid-cholesterol interaction is given as follows \cite{GKRR16}:
\begin{align}
\mathcal{F}(\varphi,v) = \int_\Gamma \Big[\frac{\epsilon}{2}|\nabla_\Gamma\varphi|^2 +\frac{1}{\epsilon}W(\varphi) +\frac{2}{\delta}\Big(v-\frac{1+\varphi}{2}\Big)^2\Big]\,\mathrm{d}S,
\label{Surf:E}
\end{align}
where $\nabla_\Gamma$ denotes the surface gradient on $\Gamma$.
Then the two unknowns $\mu$, $\eta$ in \eqref{1.d} and \eqref{1.f} can be defined as chemical potentials associated with $\mathcal{F}$ such that
\begin{align*}
&\mu:=\frac{\delta\mathcal{F}}{\delta \varphi}
= -\epsilon \Delta_\Gamma \varphi +\frac{1}{\epsilon}W'(\varphi)-\frac{1}{\delta}(2v-1-\varphi),\\
&\eta:=\frac{\delta\mathcal{F}}{\delta v}
= \frac{2}{\delta}(2v-1-\varphi).
\end{align*}
The first two terms in \eqref{Surf:E} forms a classical Ginzburg-Landau type energy for the phase separation process on the membrane. The gradient term represents the surface energy of free interfaces (i.e., heterogeneity of the lipid mixtures) and the nonlinearity $W$ represents the homogeneous free energy. Competition between these two terms leads to
the spatial phase separation. The third term in \eqref{Surf:E} accounts for the affinity between saturated lipid molecules and the
membrane-bound cholesterol, which represents a preferential binding of cholesterol to the lipid-saturated phase. The constants $\epsilon>0$, $\delta>0$ characterize the width of transition layers
(diffuse interfaces) between the distinct lipid phases and the strength of the lipid-cholesterol affinity, respectively. The nonlinear function $W$ in \eqref{Surf:E} is a double-well potential that has two minima and a local unstable maximum in between. A physically significant example is given by
\be
W_{\text{log}}(r)=\frac{\theta}{2}\big[(1-r)\ln(1-r)+(1+r)\ln(1+r)\big] -\frac{\theta_{0}}{2} r^2,\quad \forall\, r\in[-1,1],
\label{logpot}
\ee
with $0<\theta<\theta_{0}$ (see, e.g., \cite{CH,CMZ}). It is also referred to as the Flory-Huggins potential in the context of polymer solutions \cite{Flo42,Hug41}.
The logarithmic part of $W_{\text{log}}$ is related to the mixing entropy for the biniary mixture, while the quadratic term accounts for the demixing effects. We say $W_{\text{log}}$ is a singular potential, because its derivative $W'$ diverges to $\pm \infty$ when its argument approaches $\pm 1$, respectively.
In the literature, $W_{\text{log}}$ is often approximated by a fourth-order polynomial (via Taylor's expansion at $r=0$ with suitably adjusted coefficients, see e.g., \cite{DD95}):
\be
W_{\text{reg}}(r)=\frac{1}{4}\big(1-r^2\big)^2,\qquad \forall\,r\in\mathbb{R},
\label{regular}
\ee
or some more general polynomials  \cite{Mi19}.
As in \cite[Section 2]{GKRR16}, after taking the bulk free energy into account, we can derive the following energy identity (for sufficient regular solutions):
\begin{align}
& \frac{\mathrm{d}}{\mathrm{d}t}\mathcal{E}(u,\varphi,v) + D\int_\Omega |\nabla u|^2\,\mathrm{d}x +\int_\Gamma \big(|\nabla_\Gamma \mu|^2+|\nabla_\Gamma \eta|^2\big)\,\mathrm{d}S    =\int_\Gamma q (\eta-u)\,\mathrm{d}S,\quad \label{BEL1}
\end{align}
where
\begin{align}
\mathcal{E}(u,\varphi,v)
= \frac12\int_\Omega u^2\,\mathrm{d}x+ \mathcal{F}(\varphi,v),
\label{ToE}
\end{align}
denotes the total free energy for the full bulk-surface coupled system \eqref{1.a}--\eqref{1.f}.

The mass exchange term $q$ in \eqref{1.b} and \eqref{1.e} plays an important role in the modelling and analysis. It follows from \eqref{BEL1} that the temporal change of $\mathcal{E}(u,\varphi,v)$ depends on specific choices of $q$. To this end, two different qualitative regimes have been investigated in \cite{GKRR16}. The first one is referred to as the \emph{equilibrium case}, in which the constitutive relation on $q$ guarantees $\int_\Gamma q (\eta-u)\,\mathrm{d}S\leq 0$ so that the total free energy $\mathcal{E}$ is decreasing in time. A typical example in this category takes the following form (see e.g., \cite[(2.17)]{GKRR16}):
\begin{align}
 & q(u,\varphi,v) = -A(\eta-u), \label{q1}
\end{align}
where $A\geq 0$ can be either a constant or a function possibly depending on temporal and spatial variables.
The second regime is called the \emph{non-equilibrium case}, in which the total free energy  $\mathcal{E}$ might increase during the evolution.
One typical constitutive relation in this category considers
the membrane attachment as an elementary ``reaction'' between free sites on the membrane and cholesterol, with possible detachment proportional to the membrane-cholesterol concentration (see e.g., \cite[(1.8)]{GKRR16}):
\begin{align}
 & q(u,v) = B_1u(1-v)-B_2 v,  \label{q2}
\end{align}
 for some constants $B_1,\,B_2 > 0$.
 In \cite{GKRR16}, the authors compared qualitative behaviors of the system \eqref{1.a}--\eqref{1.f} under different choices for the mass exchange term $q$. In the equilibrium case, due to the validity of a global free energy inequality, it was expected that the evolution will approach a steady state as time goes infinity (though this point was not rigorously proved). Besides, the numerical simulations therein indicated that equilibrium-type models could only support the formation of macrodomains and connected phases. On the other hand, the non-equilibrium processes (e.g., with the prototypical choice \eqref{q2}) could lead to the formation of microdomains such as the raft-like structures. This observation in \cite{GKRR16} suggested that the persisting exchange of cholesterol between bulk and cell membrane in the non-equilibrium process is responsible for the formation of complex patterns on a mesoscopic scale. Therefore, the non-equilibrium scenario turns out to be more appealing in the biological modelling. For comprehensive discussions, we refer to \cite{GKRR16}.

Since in the application the cytosolic diffusion inside cells is usually much faster than the lateral diffusion on the cell membranes, the large cytosolic diffusion limit $D\to+\infty$ leads to a natural reduction of the full system \eqref{1.a}--\eqref{1.f} to a nonlocal model defined solely on the cell membrane (see \cite[Section 3]{GKRR16}):
\begin{subequations}
	\begin{alignat}{3}
	& \partial_t u=-\frac{1}{|\Omega|}\int_\Gamma q\,\mathrm{d}S,
    & \qquad\qquad \text{for}\ t\in(0,T),
    \label{r1.a}\\
    &\partial_t\varphi =\Delta_\Gamma \mu,
    &  \qquad\qquad\text{on}\ \Gamma\times(0,T),
    \label{r1.c}\\
    &\mu=-\epsilon \Delta_\Gamma \varphi +\frac{1}{\epsilon}W'(\varphi)-\frac{1}{2}\eta,
    &  \qquad\qquad\text{on}\ \Gamma\times(0,T),
    \label{r1.d}\\
    &\partial_t v=\Delta_\Gamma \eta +q,
    &  \qquad\qquad\text{on}\ \Gamma\times(0,T),
    \label{r1.e}\\
    &\eta=\frac{4}{\delta}\Big(v-\frac{1+\varphi}{2}\Big),
    &  \qquad\qquad\text{on}\ \Gamma\times(0,T).
    \label{r1.f}
	\end{alignat}
\end{subequations}
In this reduced model, the concentration of cytosolic
cholesterol $u$ becomes a time-dependent function that is spatially constant in $\Omega$ and satisfies an ordinary differential equation \eqref{r1.a}. The system \eqref{r1.c}--\eqref{r1.f} is then subject to the initial conditions
\begin{subequations}
\begin{alignat}{3}
&\varphi|_{t=0}=\varphi_{0}(x), \quad  v|_{t=0}=v_{0}(x), &\qquad \qquad \text{on}\ \Gamma,
\label{rini1}\\
&u|_{t=0}= u_0 =\frac{1}{|\Omega|}\Big(M-\int_\Gamma v_0\,\mathrm{d}S\Big),
\label{rini2}
\end{alignat}
\end{subequations}
where the constant $M >0$ denotes the total mass of cholesterol in the bulk and surface (which is conserved in time, cf. \eqref{mass1}). Numerical simulations performed in \cite{GKRR16} indicated that if the effect of the lipid interaction with membrane-bound cholesterol is sufficiently large (i.e., when $0<\delta \ll 1$, cf. \eqref{Surf:E}), the long-time behavior (in terms of stationary states) of the reduced system \eqref{r1.a}--\eqref{r1.f} can be very close to that of the well-known Ohta-Kawasaki system for phase separation in diblock copolymers \cite{NO95,OK86}, which has been shown to generate intermediate-sized structures in the phase separation process.

When the double well potential $W$ is assumed to be regular with the following specific form
$$
W(r)=(r^2-1)^2,
$$
rigorous mathematical analysis for the full bulk-surface coupled system \eqref{1.a}--\eqref{1.f} as well as the reduced surface system \eqref{r1.a}--\eqref{r1.f} has been carried out in the recent contribution \cite{AK20}. Under suitable assumptions on the mass exchange term $q$, the authors of \cite{AK20} proved existence, uniqueness and regularity of global weak solutions to the initial boundary value problem \eqref{1.a}--\eqref{ini2}. Furthermore, they investigated the large cytosolic diffusion limit as $D\to +\infty$ in the full bulk-surface coupled system \eqref{1.a}--\eqref{1.f} and then proved existence of global weak solutions to the reduced problem \eqref{r1.a}--\eqref{rini2}. For the reduced system \eqref{r1.a}--\eqref{r1.f}, they established the existence of stationary solutions by Leray-Shauder's principle and showed the boundedness of global weak solutions under certain sublinear growth condition on $q$. Finally, they studied the limit of large affinity between membrane components as $\delta\to 0^+$ and recovered a variant of the Ohta-Kawasaki system. We also mention that existence of weak solutions to the corresponding sharp interface model (i.e., a Mullins-Sekerka equation coupled to diffusion equations in the bulk domain $\Omega$ and on the boundary $\Gamma$) has been established in \cite{AK21a}. Besides, the sharp-interface limit as $\epsilon\to 0^+$, that is, the convergence of weak solutions of the diffuse-interface model to weak (varifold) solutions of the corresponding sharp-interface model was rigorously proved in \cite{AK21b}.

In view of the physical interpretation of $\vp$, only values in the interval $[-1,1]$ are admissible. However, the Cahn-Hilliard equation with a regular potential does not admit a maximum principle that can guarantee the physical bound $\vp\in [-1,1]$ along evolution (see for instance, \cite[Remark 2.1]{CMZ} for a counterexample). This motivates us to extend the mathematical analysis done in \cite{AK20} to the more interesting case with a singular potential $W$, in particular, including the physically relevant logarithmic type \eqref{logpot}. Furthermore, our second aim is to rigorously prove the long-time behavior of global weak solutions to both systems under suitable assumptions on the mass exchange term $q$.

In summary, we establish the following results.
\begin{itemize}
\item[(1)] We prove existence and uniqueness of global weak solutions to the full bulk-surface coupled system \eqref{1.a}--\eqref{ini2} (see Theorem \ref{thm:weak}). For the mass exchange term $q$ with a general form, the existence of a global weak solution is obtained under a linear growth assumption on $q$, while uniqueness can be proven if $q$ is globally Lipschitz continuous. Besides, in the equilibrium case with $q$ given by \eqref{q1}, we show that the global weak solution is uniformly bounded for all $t\geq0$. The proof is based on a suitable regularization of the singular potential $W$ and a Faedo-Galerkin approximating scheme for the bulk-surface coupled system.
\item[(2)] We show the instantaneous regularity of global weak solutions to the full bulk-surface coupled system \eqref{1.a}--\eqref{ini2} for positive time  (see Theorem \ref{thm:reg}). In particular, we prove that if the initial datum $\vp_0$ is not a pure state then the weak solution $\vp$ will keep a strictly positive distance from the pure states $\pm 1$ for any $t>0$. Moreover, this positive distance can be uniform in the equilibrium case \eqref{q1} from a certain time on. The property of \emph{separation from pure states} is crucial in the study of the Cahn-Hilliard equation with singular potential, since it enables us to overcome possible singularities for derivatives of the potential $W$ near $\pm 1$ and then obtain higher-order spatial regularity of solutions (see e.g., \cite{AW07,CMZ,FG12,GGW23,GGM2017,GGW18,H1,Mi19} and the references cited therein).
\item[(3)]  Applying the well-known {\L}ojasiewicz-Simon approach \cite{LS83} (see e.g., \cite{AW07,FS,HJ2001,HT01,GGW18,JWZ,RH99} for its various applications), we prove that in the equilibrium case with $q$ given by \eqref{q1}, every global weak solution to the full bulk-surface coupled system \eqref{1.a}--\eqref{ini2}  converges to a single equilibrium as time goes to infinity (see Theorem \ref{thm:conv}), provided that the potential function $W$ is real analytic (cf. \eqref{logpot}) and the mass exchange between the bulk and the cell membrane decays sufficiently fast. This result rigorously justifies the expectation made in \cite{GKRR16} about the long-time behavior of global solutions in the equilibrium case. The stationary state $\vp_\infty$ coincides with a critical point of the Cahn-Hilliard energy subject to the mass constraint
    $\int_\Gamma \vp_\infty\,\mathrm{d}S=\int_\Gamma \vp_0\,\mathrm{d}S$, while other steady states can be determined by the bulk-surface mass constraint (cf. \eqref{mass1})  and the constitutive relation  \eqref{q1}.
\item[(4)] For the reduced problem \eqref{r1.a}--\eqref{rini2}, we are mainly interested in the non-equilibrium case with a reaction type source term $q$ given by \eqref{q2}. We establish the existence and uniqueness of a global weak solution that is uniformly bounded in time (see Theorem \ref{thm:rweak}). The proof for the existence result is based on the large cytosolic diffusion limit as $D\to +\infty$ for the full system \eqref{1.a}--\eqref{1.f} with a modified mass exchange term $\widetilde{q}$ given by \eqref{q2m} (see Proposition \ref{prop:LD}). This extends \cite[Proposition 2.6]{AK20} to the current case with a singular potential. Although the free energy $\mathcal{F}$ may increase in the non-equilibrium case, under the special structure of \eqref{q2}, we find that the system \eqref{r1.a}--\eqref{r1.f} admits a dissipative type estimate, which yields the uniform-in-time boundedness of it solution.
\item[(5)] For the reduced problem \eqref{r1.a}--\eqref{rini2} with $q$ given by \eqref{q2},  we further investigate its long-time behavior in terms of the global attractor (see Theorem \ref{thm:att}). First, we show that global weak solutions to the reduced problem \eqref{r1.a}--\eqref{rini2} generate a strongly continuous semigroup $\mathcal{S}(t)$ in a suitable phase space $\mathcal{V}_{M,m}$. The dissipative estimate obtained in (4) further gives the existence of an absorbing set in $\mathcal{V}_{M,m}$. Combining this with the regularity property of weak solutions (which yields the asymptotic compactness), we are able to prove the existence of a global attractor for the dissipative dynamical system $(\mathcal{S}(t),\mathcal{V}_{M,m})$ by adapting the classical theory for infinite dimensional dynamical systems \cite{T}.
\end{itemize}

Our study provides a first-step theoretical analysis on the bulk-surface coupled system \eqref{1.a}--\eqref{1.f} as well as its reduced system \eqref{r1.a}--\eqref{r1.f} with a singular potential. Some interesting future issues include, for instance, estimates on the fractal dimension of the global attractor and existence of exponential attractors, connection with the Ohta-Kawasaki equation in the large affinity limit, evolution problem on evolving domains/surfaces (cf. e.g., \cite{CE21,CEGP,ER15,YQO20,ZT19} for modelling and analysis of phase separation on dynamic surfaces). \medskip

\emph{Plan of the paper}. The remaining part of this paper is organized as follows. In Section \ref{sec:main}, we first introduce some notations, assumptions and analytic tools that will be used throughout the paper. Then we present the main results of this paper. In Section \ref{sec:wellBS}, we study well-posedness of the full bulk-surface coupled system \eqref{1.a}--\eqref{ini2}. We prove Theorem \ref{thm:weak} on the existence and uniqueness of global weak solutions and Theorem \ref{thm:reg} on their regularity properties. Section \ref{sec:LTBS} is devoted to the long-time behavior of the full system \eqref{1.a}--\eqref{ini2} in the equilibrium case with a mass exchange term $q$ satisfying \eqref{q1}. We characterize the $\omega$-limit set and prove the convergence of global weak solutions to a single equilibrium as $t\to+\infty$ by means of the {\L}ojasiewicz-Simon approach (Theorem \ref{thm:conv}). In the final Section \ref{sec:RdS}, we study the reduced problem \eqref{r1.a}--\eqref{rini2} in the non-equilibrium case with a mass exchange term $q$ satisfying \eqref{q2}. We prove the existence and uniqueness of a global weak solution (Theorem \ref{thm:rweak}) and establish the existence of a global attractor (Theorem \ref{thm:att}).

\section{Main Results}\label{sec:main}
\setcounter{equation}{0}
\subsection{Preliminaries}
Let $X$ be a real Banach space. The dual space of $X$ is denoted by $X'$, and the duality pairing between $X$, $X'$ is denoted by
$\langle \cdot,\cdot\rangle_{X',X}$. Given an interval $J\subset [0,+\infty)$, we introduce the function space $L^p(J;X)$ with $p\in [1,+\infty]$, which consists of Bochner measurable $p$-integrable
functions with values in the Banach space $X$. The boldface letter $\bm{X}$ denotes the vectorial space $X^d$ ($d\in \mathbb{Z}^+$) endowed with the product structure.

We assume that $\Omega \subset\mathbb{R}^3$ is a smooth bounded domain with its boundary denoted by $\Gamma$. Besides, $\Gamma$ is assumed to be a two-dimensional smooth, compact and connected hypersurface without boundary. For the standard Lebesgue and Sobolev spaces on $\Omega$ and $\Gamma$, we use the notations $L^{p}(\Omega)$, $W^{k,p}(\Omega)$, $L^{p}(\Gamma)$, $W^{k,p}(\Gamma)$ for any $p \in [1,+\infty]$ and $k > 0$,
equipped with the corresponding norms
$\|\cdot\|_{L^{p}(\Omega)}$, $\|\cdot\|_{W^{k,p}(\Omega)}$, $\|\cdot\|_{L^{p}(\Gamma)}$, $\|\cdot\|_{W^{k,p}(\Gamma)}$, respectively. When $p = 2$, these spaces are Hilbert spaces and we simply denote $H^{k}(\Omega) := W^{k,2}(\Omega)$, $H^{k}(\Gamma) := W^{k,2}(\Gamma)$.
For details of how to define surface Sobolev spaces, we refer to \cite{Aubin82}. The following interpolation inequality  will be used in the subsequent analysis (see e.g., \cite[Chapter 2, (2.27)]{La}):
\begin{align}
\|f\|_{L^2(\Gamma)}\leq C\|f\|_{H^1(\Omega)}^\frac12\|f\|_{L^2(\Omega)}^\frac12,\qquad \forall\, f\in H^1(\Omega),
\label{inter1}
\end{align}
where $C>0$ only depends on $\Omega$ and $\Gamma$.

Next, we recall some basic facts of calculus on surfaces, see e.g.,  \cite[Section 2]{DE13}.
Let $\bm{n}=\bm{n}(x)$ be the unit outward normal vector on $\Gamma$. For a scalar differentiable function $f: \Gamma \to \mathbb{R}$, its surface (tangential) gradient is defined as $\nabla_\Gamma f = \nabla \widetilde{f} - (\nabla \widetilde{f}\cdot \bm{n})\bm{n}$. Here, $\nabla$ denotes the gradient in $\mathbb{R}^3$, $\widetilde{f}$ is a smooth extension of $f$ to a three-dimensional neighbourhood $U$ of the surface $\Gamma$ such that $\widetilde{f}|_\Gamma =f$. The surface gradient $\nabla_\Gamma f$ depends only on
the values of $f$ on $\Gamma$ and its components are denoted by $\nabla_\Gamma f= (\underline{D}_if)_{i=1}^3$. For a differentiable vector $\bm{f}: \Gamma \to \mathbb{R}^3$, $\nabla_\Gamma \bm{f}$ is a matrix with components $(\nabla_\Gamma \bm{f})_{ij}=\underline{D}_j\bm{f}_i$, $1\leq i, j\leq 3$.
Then the surface divergence operator is defined by $\mathrm{div}_\Gamma \bm{f} = \mathrm{tr}(\nabla_\Gamma \bm{f})$ and the Laplace-Beltrami operator $\Delta_\Gamma$ is given by the surface divergence of the surface gradient, that is, for any twice differentiable
function $f: \Gamma \to \mathbb{R}$, $\Delta_\Gamma f = \mathrm{div}_\Gamma (\nabla_\Gamma f)$. Moreover, we have the formula for integration by parts as well as Green's
formula \cite[Theorems 2.10, 2.14]{DE13}:
\begin{align*}
& \int_\Gamma \nabla_\Gamma f \,\mathrm{d}S = \int_\Gamma f \overline{H} \bm{n} \,\mathrm{d}S,& & \forall\, f\in C^1(\Gamma),\\
& \int_\Gamma \nabla_\Gamma f\cdot \nabla_\Gamma g\,\mathrm{d}S=-\int_\Gamma f\Delta_\Gamma g \,\mathrm{d}S,& & \forall\, f,g\in C^1(\Gamma),
\end{align*}
where $\overline{H}=\mathrm{div}_\Gamma (\bm{n})$ denotes the mean curvature of $\Gamma$.

We denote by $|\Omega|$ the Lebesgue measure of the domain $\Omega$ and by $|\Gamma|$ the
two-dimensional surface measure of its boundary $\Gamma$.
For every $f\in (H^1(\Omega))'$, we denote by $\langle f\rangle_\Omega$ its generalized mean value over $\Omega$ such that
$\langle f\rangle_\Omega=|\Omega|^{-1}\langle f,1\rangle_{(H^1(\Omega))',\,H^1(\Omega)}$. If $f\in L^1(\Omega)$, then its spatial mean is simply given by $\langle f\rangle_\Omega=|\Omega|^{-1}\int_\Omega f \,\mathrm{d}x$. Concerning a function $f$ on $\Gamma$, its mean value (denoted by $\langle f\rangle_\Gamma$), can be defined in a similar manner.
Then we recall Poincar\'{e}'s inequalities in $\Omega$ and on $\Gamma$ (see e.g., \cite[Theorem 2.12]{DE13} for the case on $\Gamma$):
\begin{align*}
&\|f-\langle f\rangle_\Omega\|_{L^p(\Omega)} \leq C_\Omega\|\nabla f\|_{\bm{L}^p(\Omega)},\qquad\, \forall\,
f\in W^{1,p}(\Omega), \\
&\|f-\langle f\rangle_\Gamma\|_{L^p(\Gamma)} \leq C_\Gamma\|\nabla_\Gamma f\|_{\bm{L}^p(\Gamma)},\qquad \forall\,
f\in W^{1,p}(\Gamma),
\end{align*}
where $p\in [1,+\infty)$, $C_\Omega$ (resp. $C_\Gamma$) is a positive constant depending only on $\Omega$ (resp. $\Gamma$) and $p$.
Next, for any given $m\in\mathbb{R}$, we define
$$
H^1_{(m)}(\Gamma) = \big\{f\in H^1(\Gamma)\ \big|\ \langle f\rangle_\Gamma=m\big\}.
$$
Consider the Poisson equation
\begin{align}
-\Delta_\Gamma \vp = g,\quad \text{on}\ \ \Gamma. \label{ellipg}
\end{align}
By the Lax-Milgram theorem, it easily follows that for any $g\in (H^1(\Gamma))'$ satisfying $\langle g\rangle_\Gamma= 0$, the equation \eqref{ellipg} admits a unique weak solution $\vp \in H^1_{(0)}(\Gamma)$, see e.g., \cite[Theorem 3.1]{DE13}. Moreover, if $g\in L^2(\Gamma)$ with $\langle g\rangle_\Gamma= 0$, then it holds (see \cite[Theorem 3.3]{DE13}, also   \cite{Aubin82})
$$
\|\vp\|_{H^2(\Gamma)}\leq C\|g\|_{L^2(\Gamma)},
$$
where $C>0$ only depends on $\Gamma$. Hence, the restriction of $-\Delta_\Gamma$ from $H^1_{(0)}(\Gamma)$ onto $\big(H^1_{(0)}(\Gamma)\big)':=\{f\in (H^1(\Gamma))'\,|\,\langle f\rangle_\Gamma= 0\} $ is an isomorphism, $-\Delta_\Gamma$ is positively defined on $H^1_{(0)}(\Gamma)$ and self-adjoint. For any $g\in \big(H^1_{(0)}(\Gamma)\big)'$, we can set $\|g\|_{\big(H^1_{(0)}(\Gamma)\big)'} =\|\nabla_\Gamma(-\Delta_\Gamma)^{-1}g\|_{\bm{L}^2(\Gamma)}$. Besides, for any $g\in (H^1(\Gamma))'$, $g\to (\|g-\langle g\rangle_\Gamma\|_{(H^1_{(0)}(\Gamma))'}^2+|\langle g\rangle_\Gamma|^2)^{1/2}$ gives an equivalent norm on $(H^1(\Gamma))'$. By Poincar\'e's inequality, we find
$g\to \|\nabla_\Gamma g\|_{\bm{L}^2(\Gamma)}$ and
$g\to (\|\nabla_\Gamma g\|_{\bm{L}^2(\Gamma)}^2+|\langle g\rangle_\Gamma|^2)^{1/2} $ give equivalent norms on $H^1_{(0)}(\Gamma)$, $H^1(\Gamma)$, respectively.

Throughout this paper, the symbols $C$, $C_i$, $i\in \mathbb{N}$, denote generic positive constants that may depend on structural coefficients of the system, the initial data, $\Omega$, $\Gamma$ and the final time $T$. Their values may change from line to line and specific dependence will be pointed out if necessary.
\medskip

\textbf{Basic assumptions}. Let us now introduce some basic hypotheses for the initial boundary value problem \eqref{1.a}--\eqref{ini2}.
\begin{enumerate}
	\item[\textbf{(H1)}] The singular potential $W$ belongs to the class of functions $C\big([-1,1]\big)\cap C^{3}\big((-1,1)\big)$ and can be written into the following form
	\begin{equation}
	W(r)=F(r)-\frac{\theta_{0}}{2}r^2,\nonumber
	\end{equation}
	such that
	\begin{equation}
	\lim_{r\to \pm 1} F'(r)=\pm \infty \quad \text{and}\ \  F''(r)\ge \theta,\qquad \forall\, r\in (-1,1),\nonumber
	\end{equation}
	where $\theta$ is a strictly positive constant and $\theta_0\in \mathbb{R}$. We make the extension $F(r)=+\infty$ for any $r\notin[-1,1]$.
	In addition, there exists $r_0\in(0,1)$ such that $F''$ is nondecreasing in $[1-r_0,1)$ and nonincreasing in $(-1,-1+r_0]$.
	\item[\textbf{(H2)}] The function $F$ satisfies the following growth assumption on its derivatives
	\be F^{\prime \prime}(r) \leq C \mathrm{e}^{C\left|F^{\prime}(r)\right|},
\qquad \forall\, r \in(-1,1), \label{de2}
\ee
	 for some positive constant  $C$.
	\item[\textbf{(H3)}] The structural coefficients $D$, $\epsilon$,   $\delta$ are  prescribed constants such that
	\be
	D>0,\quad \epsilon=1,\quad  \delta>0. \nonumber
	\ee
\end{enumerate}
\begin{remark}
The assumptions $\textbf{(H1)}$ and $\textbf{(H2)}$ on the singular potential $W$ are similar to those in \cite{AW07,GGM2017,MZ04}. In particular, $\textbf{(H2)}$ will be used to derive the strict separation property of the phase function $\vp$.
The thermodynamically relevant logarithmic potential \eqref{logpot} satisfies $\textbf{(H1)}$ and $\textbf{(H2)}$ with
$$
F(r)=\frac{\theta}{2}\big[(1-r)\ln(1-r)+(1+r)\ln(1+r)\big].
$$
In this paper, we focus on the diffuse interface model \eqref{1.a}--\eqref{1.f} and do not study its sharp-interface limit as $\epsilon \to 0^+$, so we just set $\epsilon=1$ in \textbf{(H3)} because its value does not influence the subsequent analyses. A formal asymptotic expansion for the sharp interface reduction as $\epsilon \to 0^+$ of the system \eqref{1.a}--\eqref{1.f} was performed in \cite[Section 4]{GKRR16}
and a rigorous justification in the case of a regular potential like \eqref{regular} can be found in \cite{AK21b}. The resulting limit problem consists of a free boundary problem of Mullins-Sekerka type on $\Gamma$ coupled to a diffusion process in $\Omega$ that also includes an interaction with the cholesterol concentration.
\end{remark}

Next, we consider the mass exchange term $q$. According to \cite[Remark 2.7]{AK20}, a linear growth condition for a general choice of $q$ (see \eqref{ligro} below) is needed in order to establish the existence of global weak solutions (see \cite[Theorem 2.3]{AK20} for the case with a regular potential like \eqref{regular}). However, the biologically interesting case \eqref{q2} does not fulfil this requirement. A possible modification is to introduce a proper cut-off via a bounded smooth function $\widetilde{h}$ such that
\begin{align}
\widetilde{q}(u,v)= B_1u -B_1\widetilde{h}(u)v-B_2 v.\label{q2m}
\end{align}
Then we impose the following assumptions on $q$.
\begin{itemize}
\item[\textbf{(H4)}] In the non-equilibrium case \eqref{q2} and its variant \eqref{q2m}, $B_1,B_2  > 0$ are given constants. Besides, the function $\widetilde{h}$ in \eqref{q2m} satisfies
$$
\widetilde{h}\in C^1(\mathbb{R})\cap W^{1,\infty}(\mathbb{R})\qquad \text{and}\qquad  \widetilde{h}(r)=r,\quad \forall\,r\in [-h_0,\,h_0],
$$
    for some $h_0>0$.
\item[\textbf{(H5)}] In the equilibrium case \eqref{q1}, the coefficient $A$ is allowed to be a function depending on the temporal variable, which  satisfies
$$
A\in C^1([0,+\infty))\cap W^{1,\infty}([0,+\infty))\qquad
    \text{and}
    \qquad A(r)\geq 0,\quad \forall\,r\geq 0.
$$
\end{itemize}
\begin{remark}
The assumption \textbf{(H4)} implies that the modified mass exchange term $\widetilde{q}$ satisfies a linear growth condition and it coincides with the original one given in \eqref{q2} as long as $|u|\leq h_0$. This restriction can be removed for the reduced system \eqref{r1.a}--\eqref{r1.f} under suitable assumptions on the initial datum $u_0$. The assumption \textbf{(H5)} guarantees that the mass exchange process leads to a negative contribution to the change of the total free energy as time evolves. Moreover, the value of $A$ measures the strength of mass exchange between bulk and surface.
\end{remark}

\textbf{Analytic tools}. We present some preliminary results that will be crucial in the subsequent analyses.

First, to derive the property of strict separation from pure states $\pm1$ for the phase function $\varphi$, we consider the following elliptic equation with a singular nonlinearity $F'$:
\be
-\Delta_\Gamma \varphi +F^{\prime}(\varphi)=g,\qquad \text { on } \Gamma.
\label{ph1}
\ee
Then we have

\bl\label{lem:sep}
Suppose that $\Gamma$ is a two-dimensional smooth, compact and connected hypersurface without boundary and $F$ satisfies the assumption \textbf{(H1)}.
\begin{itemize}
\item[(1)] For any given function $g\in L^2(\Gamma)$, \eqref{ph1} admits a unique solution $\varphi\in H^2(\Gamma)$ that satisfies the equation a.e. on $\Gamma$ with $F^{\prime}(\varphi)\in L^2(\Gamma)$.
\item[(2)] If $g\in H^1(\Gamma)$, then for any $p\in [2,+\infty)$, there exists a positive constant $C$ depending on $\Gamma$ and $p$ such that
\begin{align*}
\|\varphi\|_{W^{2, p}(\Gamma)}+\|F^{\prime}(\varphi)\|_{L^{p}(\Gamma)}
&\leq C\left(1+\|g\|_{H^1(\Gamma)}\right).
\end{align*}
If in addition, $F$ satisfies \textbf{(H2)}, then we have
\begin{align*}
\|F^{\prime \prime}(\varphi)\|_{L^{p}(\Gamma)}
&\leq C\Big(1+{e}^{C\|g\|_{H^1(\Gamma)}^{2}}\Big).
\end{align*}
Moreover, it holds $F^{\prime}(\varphi)\in W^{1, p}(\Gamma)$ and there exists a constant $\sigma\in (0,1)$ such that
\be
\|\varphi\|_{L^{\infty}(\Gamma)} \leq 1-\sigma.\label{sep2}
\ee
\end{itemize}
\el
\begin{remark}
The proof of Lemma \ref{lem:sep} can be carried out by adapting the arguments for the same equation in a two-dimensional bounded smooth Cartesian domain in $\mathbb{R}^2$ subject to a homogeneous Neumann boundary condition for $\vp$, see e.g., \cite{GGW23,GGM2017,H1}. We also refer to the recent work \cite[Theorem 3.9]{CEGP} for extensions to the more involved case on evolving surfaces.
\end{remark}

Second, to investigate the long-time behavior of global weak solutions to problem \eqref{1.a}--\eqref{ini2} as $t\to +\infty$, we consider the following nonlocal elliptic equation:
\begin{align}
\begin{cases}
-\Delta_\Gamma \varphi +W^{\prime}(\varphi)=\displaystyle{\frac{1}{|\Gamma|}\int_\Gamma W^{\prime}(\vp)\,\mathrm{d}S,}\qquad \text { on } \Gamma,\\
\text{subject to the constraint}\ \ \displaystyle{\langle \vp\rangle_\Gamma =m,}
\end{cases}
\label{ph2}
\end{align}
where $m\in (-1,1)$ is a given constant.
Define
\begin{align*}
 &Z_m =\big\{\vp \in H^1_{(m)}(\Gamma)\ \big|\  \|\vp\|_{L^\infty(\Gamma)}\leq 1\big\},   \\
 &\varPhi_m  =\big\{\vp \in H^2(\Gamma)\cap H^1_{(m)}(\Gamma)\ \big|\ \vp\ \text{solves}\ \eqref{ph2}\big\}.
\end{align*}
Then we have
\begin{lemma}
\label{lem:LS}
Suppose that $\Gamma$ is a two-dimensional smooth, compact and connected hypersurface without boundary and the nonlinearity $W$ satisfies the assumption \textbf{(H1)}. For any given constant $m\in (-1,1)$, let us consider the energy functional
\begin{align}
E(\vp)=  \frac12 \|\nabla_\Gamma \vp\|_{L^2(\Gamma)}^2
+ \int_\Gamma  W(\vp) \,\mathrm{d}S,\qquad \forall\, \vp\in Z_m.
\label{EEE}
\end{align}
\begin{itemize}
\item[(1)]
The set $\varPhi_m \subset Z_m$ is nonempty.
Moreover, for every $\vp_*\in \varPhi_m$, there exists a constant $\sigma_* \in (0,1)$ such that $|\vp_*(x)|\leq 1-\sigma_*$ for all $x\in \Gamma$.
\item[(2)] Assume in addition, the potential function $W$ is real analytic on $(-1,1)$.
Then for every $\vp_* \in \varPhi_m$, there exist constants $\chi\in (0,1/2)$ and $C,\,\ell>0$ such that
\begin{align}
|E(\vp)-E(\vp_*)|^{1-\chi}\leq C\Big\|-\Delta_\Gamma \varphi +W^{\prime}(\varphi)-\displaystyle{\frac{1}{|\Gamma|}\int_\Gamma W^{\prime}(\vp)\,\mathrm{d}S}\Big\|_{(H^1(\Gamma))'},
\label{LSa}
\end{align}
whenever $\vp\in H^2(\Gamma)\cap H^1_{(m)}(\Gamma)$ satisfying $\|\vp-\vp_*\|_{H^2(\Gamma)}\leq \ell$. The positive constants $\chi,\,C,\,\ell$ may depend on $\vp_*$, $m$ and $\Gamma$, but not on $\vp$.
\end{itemize}
\end{lemma}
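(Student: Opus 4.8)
The plan is to establish part (1) by a direct minimization argument and then derive the Łojasiewicz–Simon inequality (2) via the abstract framework of Simon/Chill adapted to the nonlocal constraint. For part (1), I would first show $\varPhi_m\neq\emptyset$ by minimizing the energy functional $E$ over the weakly closed, convex set $Z_m$. The functional $E$ is coercive on $Z_m$ (the gradient term controls $\|\nabla_\Gamma\vp\|_{L^2(\Gamma)}$, the mean is fixed to $m$, and $W$ is bounded below on $[-1,1]$ by continuity), and it is weakly lower semicontinuous (convexity of the gradient term plus the fact that $W\in C([-1,1])$ together with a.e. convergence along a subsequence and Fatou/dominated convergence handle the potential term). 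Hence a minimizer $\vp_*\in Z_m$ exists. Writing the Euler–Lagrange equation: for any admissible variation $\psi$ with $\langle\psi\rangle_\Gamma=0$, one gets $\int_\Gamma(\nabla_\Gamma\vp_*\cdot\nabla_\Gamma\psi + W'(\vp_*)\psi)\,\mathrm{d}S=0$ after one checks that $W'(\vp_*)\in L^1(\Gamma)$ — this is the delicate point, since a priori $\vp_*$ could touch $\pm1$ on a nonnull set. The standard device (as in \cite{GGM2017,MZ04}) is to use the convex part $F$: test the minimality against $\vp_*^{\tau}$ obtained by a suitable truncation/retraction toward the mean and pass to the limit, obtaining $F'(\vp_*)\in L^1(\Gamma)$ and then, via elliptic regularity applied to $-\Delta_\Gamma\vp_*=\text{const}-W'(\vp_*)\in L^2(\Gamma)$ bootstrapped, $\vp_*\in H^2(\Gamma)$ with $F'(\vp_*)\in L^2(\Gamma)$. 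The uniform separation $|\vp_*|\leq 1-\sigma_*$ then follows from Lemma \ref{lem:sep}(2), since $g:=\frac{1}{|\Gamma|}\int_\Gamma W'(\vp_*)\,\mathrm{d}S$ is a constant, hence in $H^1(\Gamma)$, and by \eqref{de2} the conclusion \eqref{sep2} applies.

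For part (2), the strategy is the classical Łojasiewicz–Simon argument. Fix $\vp_*\in\varPhi_m$ with separation constant $\sigma_*$. Consider the Hilbert space $V=H^1_{(0)}(\Gamma)$ and the map $\mathcal{M}:\{\vp\in H^2(\Gamma)\cap H^1_{(m)}(\Gamma): \|\vp\|_{L^\infty(\Gamma)}<1\}\to (H^1(\Gamma))'$ defined by $\mathcal{M}(\vp)=-\Delta_\Gamma\vp+W'(\vp)-\frac{1}{|\Gamma|}\int_\Gamma W'(\vp)\,\mathrm{d}S$, which is precisely $P_0\,E'(\vp)$ where $P_0$ projects onto the zero-mean subspace; note $\mathcal{M}(\vp_*)=0$. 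Because of the strict separation, in a small $H^2$-neighborhood of $\vp_*$ the argument of $W$ stays in a compact subinterval of $(-1,1)$ where $W$ (hence $W'$, $W''$) is real analytic and bounded; therefore $\mathcal{M}$ is real analytic from the neighborhood into $(H^1_{(0)}(\Gamma))'$, and its linearization $\mathcal{L}=\mathcal{M}'(\vp_*)=-\Delta_\Gamma+W''(\vp_*)-\frac{1}{|\Gamma|}\int_\Gamma W''(\vp_*)(\cdot)\,\mathrm{d}S$ acting on $H^1_{(0)}(\Gamma)$ is a self-adjoint, bounded-below Fredholm operator of index zero (compact perturbation of $-\Delta_\Gamma$ restricted to the zero-mean space). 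One then applies the abstract Łojasiewicz–Simon theorem (in the form given in, e.g., \cite{AW07,GGW18,HJ2001,RH99}): for an analytic functional $E$ on a Hilbert space whose gradient is analytic with Fredholm Hessian at a critical point, there exist $\chi\in(0,1/2)$, $C,\ell>0$ such that $|E(\vp)-E(\vp_*)|^{1-\chi}\leq C\|P_0 E'(\vp)\|_{(H^1(\Gamma))'}$ for $\|\vp-\vp_*\|_{H^2(\Gamma)}\leq\ell$, which is exactly \eqref{LSa}.

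I expect the main obstacle to be twofold. First, in part (1), making rigorous the passage from the constrained minimization to the statement $F'(\vp_*)\in L^2(\Gamma)$ (and hence the a.e. Euler–Lagrange equation) requires the careful truncation/retraction argument — one cannot naively differentiate $E$ because $W'$ blows up; this is where I would follow \cite{GGM2017,MZ04} closely, using the monotonicity and the behavior of $F''$ near $\pm1$ from \textbf{(H1)}. Second, in part (2), the nonlocal Lagrange-multiplier term in \eqref{ph2} means one must work on the constrained manifold $H^1_{(m)}(\Gamma)$ rather than the whole space, so the correct functional-analytic setup uses the zero-mean projection $P_0$ throughout and one must verify that the Hessian of the \emph{restricted} functional inherits the Fredholm property — this is standard once one observes that restricting to $H^1_{(0)}(\Gamma)$ merely removes the one-dimensional kernel direction associated with the constraint, and that $\|\cdot\|_{(H^1(\Gamma))'}$ restricted to zero-mean functionals is equivalent to $\|\nabla_\Gamma(-\Delta_\Gamma)^{-1}\cdot\|_{L^2(\Gamma)}$ as recalled in the preliminaries. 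With these two points handled, the rest is a routine application of the established machinery.
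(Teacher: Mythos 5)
Your proposal follows essentially the same route as the paper, which gives no detailed proof but states in the remark after the lemma that the argument is an adaptation of \cite[Section 6]{AW07}: existence and strict separation of critical points under the mass constraint, then the abstract {\L}ojasiewicz--Simon inequality for the analytic gradient map with Fredholm linearization on the zero-mean subspace, using exactly the observation you make that the $H^2$-smallness together with the separation of $\vp_*$ keeps the perturbation uniformly away from $\pm 1$ so that $W'$ is analytic and bounded on the relevant range. One small caveat: in part (1) you invoke \eqref{de2} (i.e.\ \textbf{(H2)}) to deduce the separation \eqref{sep2} from Lemma \ref{lem:sep}, whereas Lemma \ref{lem:LS}-(1) is stated under \textbf{(H1)} alone; for a stationary solution the separation can instead be obtained without \textbf{(H2)} from the continuity of $\vp_*$ (which lies in $W^{2,p}(\Gamma)\hookrightarrow C(\Gamma)$) and the blow-up of $F'$ near $\pm 1$, as in \cite[Section 6]{AW07}.
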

\begin{remark}
It is easy to verify that the physically relevant logarithmic potential \eqref{logpot} is real analytic on $(-1,1)$ and thus fulfils all requirements in Lemma \ref{lem:LS}-(2).
The proof of Lemma \ref{lem:LS} follows the arguments in \cite[Section 6]{AW07} for the classical Cahn-Hilliard equation with a singular potential, subject to homogeneous Neumann boundary conditions in a smooth bounded Cartesian domain $\Omega\subset \mathbb{R}^3$.

The inequality \eqref{LSa} is a specific type of the so-called Lojasiewicz-Simon gradient inequality, which turns out to be very useful in the study of long-time behavior for nonlinear evolution equations with a dissipative structure, see e.g., \cite{A2009,AW07,HJ2001,JWZ,Mi19,RH99} and the references cited therein. Note that in Lemma \ref{lem:LS}-(2) we require $\|\vp-\vp_*\|_{H^2(\Gamma)}$ to be sufficiently small. This condition combined with the Sobolev embedding $H^2(\Gamma)\hookrightarrow C(\Gamma)$ and the strict separation property of $\vp_*$ as stated in Lemma \ref{lem:LS}-(1) yields that the perturbation $\vp$ belongs to $Z_m$ and indeed it keeps a uniform positive distance from the pure states $\pm 1$. Hence, we can avoid the difficulty from the singularity of $W'$ at $\pm 1$ and adapt the argument for \cite[Proposition 6.3]{AW07}.
\end{remark}

\subsection{Statement of results}

\subsubsection{The full bulk-surface coupled system \eqref{1.a}--\eqref{1.f}}

We first introduce the notion of (global) weak solutions.

\begin{definition}[Weak solutions]
\label{def:weak}
Let $T>0$. We call the functions $(u,\varphi,\mu,v,\eta)$ a weak solution to problem \eqref{1.a}--\eqref{ini2} on the interval $[0,T]$, if the following properties are fulfilled:
\begin{itemize}
\item[(1)] $(u,\varphi,\mu,v,\eta)$ satisfy
\begin{align}
& u\in L^\infty(0,T;L^2(\Omega))\cap L^2(0,T;H^1(\Omega))\cap H^1(0,T;(H^1(\Omega))'),\notag\\
& \varphi \in L^{\infty}(0,T;H^1(\Gamma))\cap L^{4}(0,T;H^2(\Gamma))\cap L^2(0,T;W^{2,p}(\Gamma)) \cap H^{1}(0,T;(H^1(\Gamma))'),\notag \\
&\mu \in L^{2}(0,T;H^1(\Gamma)),\qquad F'(\varphi)\in L^2(0,T;L^p(\Gamma)),\notag \\
& v,\,\eta  \in L^{\infty}(0,T;L^2(\Gamma))\cap L^{2}(0,T;H^1(\Gamma)) \cap H^1(0,T;(H^1(\Gamma))'),\notag\\
&\varphi\in L^{\infty}(\Gamma\times (0,T))\ \textrm{and}\ \ |\varphi(x,t)|<1\ \ \textrm{a.e.\ on}\ \Gamma\times(0,T),\notag
\end{align}
where $p\in [2,+\infty)$ is arbitrary.
\item[(2)] For all $\xi\in L^2(0,T;H^1(\Omega))$ and $\zeta\in L^2(0,T;H^1(\Gamma))$, it holds
\begin{subequations}
			\begin{alignat}{3}
& \int_0^T  \langle \partial_t u,\xi \rangle_{(H^1(\Omega))',\,H^1(\Omega)}\,\mathrm{d}t
+ D\int_0^T \!\int_\Omega \nabla u\cdot \nabla \xi\,\mathrm{d}x\mathrm{d}t=
- \int_0^T\! \int_\Gamma q \xi \,\mathrm{d}S \mathrm{d}t,&\label{w1.a}\\
& \int_0^T \left \langle \partial_t \varphi,\zeta\right \rangle_{(H^1(\Gamma))',\,H^1(\Gamma)}\,\mathrm{d}t
 = - \int_0^T \!\int_\Gamma \nabla_\Gamma \mu \cdot\nabla_\Gamma \zeta\,\mathrm{d}S \mathrm{d}t, &\label{w1.c} \\
			&\ \, \mu=- \Delta_\Gamma \varphi + W'(\varphi)-\frac{1}{2}\eta,\qquad \text{a.e. on}\ \Gamma\times (0,T),&\label{w1.d}\\
			&\int_0^T\left \langle\partial_t v,\zeta\right \rangle_{(H^1(\Gamma))',\,H^1(\Gamma)}\,\mathrm{d}t +  \int_0^T \!\int_\Gamma \nabla_\Gamma \eta\cdot \nabla_\Gamma \zeta\,\mathrm{d}S \mathrm{d}t = \int_0^T\! \int_\Gamma q \zeta \,\mathrm{d}S \mathrm{d}t,& \label{w1.e}\\
            &\ \, \eta = \frac{2}{\delta}\left(2v-1-\varphi\right),\qquad \text{a.e. on}\ \Gamma\times (0,T).& \label{w1.f}
\end{alignat}
\end{subequations}
\item[(3)] The initial conditions \eqref{ini1}--\eqref{ini2} are satisfied.
\end{itemize}
\end{definition}
\begin{remark}
The initial data can be attained in the following sense: from
regularity properties of weak solutions, we have
$$
u\in C_w([0,T];L^2(\Omega)),\quad \varphi\in C_w([0,T];H^1(\Gamma))\quad \text{and}\quad
v\in C_w([0,T];L^2(\Gamma)),
$$
where the subscript ``w'' stands for the weak continuity in time. Indeed, we can derive the strong continuity in time, see Section \ref{mei}.
\end{remark}

The existence of a global weak solution to the full bulk-surface coupled system \eqref{1.a}--\eqref{ini2} is ensured by the following theorem.

\bt[Existence and uniqueness] \label{thm:weak}
Let $T>0$ and the assumptions \textbf{(H1)}, \textbf{(H3)} be satisfied. Suppose that the general mass exchange term $q=q(u,\vp,v):\,\mathbb{R}^3\to \mathbb{R}$ is continuous and fulfils the linear growth condition
\begin{align}
 |q(u,\,\varphi,\,v)|\leq C(1+|u|+|\varphi|+|v|),\qquad \forall\, u,\,\varphi,\,v\in \mathbb{R},
 \label{ligro}
\end{align}
  with some positive constant $C$ that is independent of $u$, $\varphi$ and $v$.
\begin{itemize}
 \item[(1)] For any initial data $(u_0,\,\varphi_0,\,v_0)$ satisfying
\begin{align}
 u_0\in L^2(\Omega),\ \  \varphi_{0}\in H^1(\Gamma),\ \  v_{0}\in L^2(\Gamma)\ \ \text{with}\ \ \|  \varphi_{0} \|_{L^{\infty}(\Gamma)} \le 1, \ \  |\langle\varphi_{0}\rangle_\Gamma|<1,
 \label{cini}
 \end{align}
 the initial boundary value problem \eqref{1.a}--\eqref{ini2} admits at least one global weak solution $(u,\varphi,\mu,v,\eta)$ on $[0,T]$ in the sense of Definition \ref{def:weak}. Moreover, we have
 \begin{align}
 &\sup_{t\in [0,T]}\|\varphi(t)\|_{L^\infty(\Gamma)}\leq 1.
  \label{ues1}
 \end{align}
\item[(2)] If the mass exchange term $q$ is globally Lipschitz continuous on $\mathbb{R}^3$, then the weak solution is unique.
\item[(3)] In the equilibrium case, i.e., $q$ takes the form of \eqref{q1} under the assumption \textbf{(H5)}, the global weak solution is unique and it satisfies the energy equality
 \begin{align}
 & \mathcal{E}(u(t),\varphi(t),v(t))
 + D\int_0^t\!\int_\Omega |\nabla u(\tau)|^2\,\mathrm{d}x \mathrm{d}\tau \notag\\
 &\qquad
 + \int_0^t\! \int_\Gamma \big(|\nabla_\Gamma \mu(\tau)|^2+|\nabla_\Gamma \eta(\tau)|^2\big)\,\mathrm{d}S\mathrm{d}\tau
 +\int_0^t\! A(\tau)\int_\Gamma |(\eta-u)(\tau)|^2 \,\mathrm{d}S\mathrm{d}\tau  \nonumber\\
 &\quad  =\mathcal{E}(u_0,\varphi_0,v_0),\qquad \forall\, t>0,
\label{BELweak}
 \end{align}
 where $\mathcal{E}$ is the total free energy given by \eqref{ToE}. Moreover, the solution  $(u,\varphi,\mu,v,\eta)$
 is uniformly bounded on the whole interval $[0,+\infty)$ such that
 \begin{align}
 & \|u\|_{L^\infty(0,+\infty;L^2(\Omega))}  + \|\varphi\|_{L^\infty(0,+\infty;H^1(\Gamma))}  + \|v\|_{L^\infty(0,+\infty;L^2(\Gamma))} \notag\\
 &\quad + \|\nabla u\|_{L^2(0,+\infty;\bm{L}^2(\Omega))}
 + \|\nabla_\Gamma \mu\|_{L^2(0,+\infty;\bm{L}^2(\Gamma))}
 + \|\nabla_\Gamma \eta\|_{L^2(0,+\infty;\bm{L}^2(\Gamma))} \leq C,
 \label{ues2}
 \end{align}
 where the constant $C>0$ depends on $\|u_0\|_{L^2(\Omega)}$, $\|\varphi_0\|_{H^1(\Gamma)}$, $\|v_0\|_{L^2(\Gamma)}$, $\int_\Gamma F(\varphi_0)\,\mathrm{d}S$, $\langle\varphi_0\rangle_\Gamma$, $\Omega$, $\Gamma$ and the coefficients $D$, $\delta$.
\end{itemize}
\et
\begin{remark}
The condition \eqref{cini} on the initial data implies that the initial total energy $\mathcal{E}(u_0,\varphi_0,v_0)$ is finite. Besides, the requirement $|\langle\varphi_{0}\rangle_\Gamma|<1$ says that the initial state of the phase function $\varphi$ cannot be a pure state, namely, $\varphi_{0}$ is not identically equal to $-1$ or $1$ on $\Gamma$, under which the phase separation of saturated/saturated lipids will never happen. Concerning typical forms of the mass exchange term $q$, in the equilibrium case with \eqref{q1} (under the assumption \textbf{(H5)}), $q$ satisfies both the linear growth condition \eqref{ligro} and the global Lipschitz continuity, thus the global weak solution exists and is unique. For the non-equilibrium case with \eqref{q2m} (under the assumption \textbf{(H4)}), $q$ satisfies the linear growth condition but is only locally Lipschitz continuous, then uniqueness of weak solutions is not available.
\end{remark}

Our second result concerns the instantaneous regularity of weak solutions for arbitrary positive time.

\bt[Regularity] \label{thm:reg}
Suppose that the assumptions in Theorem \ref{thm:weak} are satisfied, $q$ is globally Lipschitz continuous on $\mathbb{R}^3$ and the nonlinearity $F$ fulfils \textbf{(H2)}.
\begin{itemize}
\item[(1)] For any $t_0\in (0,T)$, the global weak solution obtained in Theorem \ref{thm:weak}-(1) fulfils
\begin{align}
& \varphi \in L^{\infty}(t_0,T;H^3(\Gamma))\cap L^{2}(t_0,T;H^4(\Gamma))\cap H^{1}(t_0,T; H^1(\Gamma)),\notag \\
&\mu \in L^\infty(t_0,T;H^1(\Gamma))\cap  L^2(t_0,T;H^3(\Gamma))\cap H^{1}(t_0,T; (H^1(\Gamma))'),\notag \\
& v \in L^{\infty}(t_0,T;H^1(\Gamma))\cap L^{2}(t_0,T;H^2(\Gamma)) \cap H^1(t_0,T;L^2(\Gamma)).\notag
\end{align}
Moreover, there exists a constant $\sigma_{t_0,T}\in (0, 1)$ such that the phase function $\vp$ satisfies
\begin{align}
\|\vp(t)\|_{C(\Gamma)}\leq 1-\sigma_{t_0,T},\qquad \forall\,t\in [t_0,T].
\label{vp-sep0}
\end{align}
where $\sigma_{t_0,T}$ may depend on $\|u_0\|_{L^2(\Omega)}$, $\|\varphi_0\|_{H^1(\Gamma)}$, $\|v_0\|_{L^2(\Gamma)}$,  $\int_\Gamma F(\varphi_0)\,\mathrm{d}S$, $\langle\varphi_0\rangle_\Gamma$, $\Omega$, $\Gamma$, coefficients of the system, $t_0$ and $T$.
\item[(2)] In the equilibrium case, i.e., $q$ takes the form of \eqref{q1} under the assumption \textbf{(H5)}, for any $t_0>0$, we have
\begin{align}
& \|u\|_{L^\infty(t_0,+\infty; H^1(\Omega))}
+ \|\varphi\|_{ L^{\infty}(t_0,+\infty;H^3(\Gamma))}
+ \|\mu\|_{ L^\infty(t_0,+\infty;H^1(\Gamma))}\non\\
&\quad +\|v\|_{L^{\infty}(t_0,+\infty;H^1(\Gamma))}\leq C(t_0),\qquad \forall\, t\geq t_0,
\label{unit-es1}
\end{align}
for some $C(t_0)>0$ and the strict separation property \eqref{vp-sep0} holds on $[t_0,+\infty)$ with some $\sigma_{t_0}\in (0,1)$. Both constants $C(t_0)$ and $\sigma_{t_0}$ may depend on $\|u_0\|_{L^2(\Omega)}$, $\|\varphi_0\|_{H^1(\Gamma)}$, $\|v_0\|_{L^2(\Gamma)}$,  $\int_\Gamma F(\varphi_0)\,\mathrm{d}S$, $\langle\varphi_0\rangle_\Gamma$, $\Omega$, $\Gamma$, coefficients of the system and $t_0$, but are independent of the time $t$.
\end{itemize}
\et

Next, in the equilibrium case \eqref{q1}, under suitable decay property of the coefficient $A(t)$, we can prove that every global weak solution converges to a single equilibrium as $t\to+\infty$.

\begin{theorem}[Convergence to equilibrium]
\label{thm:conv}
Let the assumptions \textbf{(H1)}--\textbf{(H3)} be satisfied.
In addition, we suppose that
\begin{itemize}
\item[(1)] the nonlinearity $W$ is real analytic on $(-1,1)$;
\item[(2)] the mass exchange term $q$ takes the specific form of \eqref{q1} with its coefficient $A$ fulfilling the assumption \textbf{(H5)} and the following decay property
\begin{align}
A(t)\leq C(1+t)^{-\alpha} \qquad \text{for some}\ \alpha>1. \label{decayA}
\end{align}
\end{itemize}
Then for any initial datum $(u_0,\varphi_0,v_0)$ such that
 $$
 u_0\in L^2(\Omega),\quad \varphi_{0}\in H^1(\Gamma),\quad  v_{0}\in L^2(\Gamma)\quad \text{with}\quad \|  \varphi_{0} \|_{L^{\infty}(\Gamma)} \le 1\quad \text{and}\quad |\langle\varphi_{0}\rangle_\Gamma|<1,
 $$
 the unique global weak solution $(u,\varphi,\mu,v,\eta)$ to problem \eqref{1.a}--\eqref{ini2} satisfies
 \begin{align}
 &\lim_{t\to+\infty} \big(\|u(t)-u_\infty\|_{L^2(\Omega)} + \|\vp(t)-\vp_\infty\|_{H^2(\Gamma)}+ \|v(t)-v_\infty\|_{L^2(\Gamma)}\big)=0,\label{conv1}\\ &\lim_{t\to+\infty} \big(\|\mu(t)-\mu_\infty\|_{L^2(\Gamma)} +
 \|\eta(t)-\eta_\infty\|_{L^2(\Gamma)}\big)=0.
 \label{conv2}
 \end{align}
 Here, $\vp_\infty\in H^3(\Gamma)$ is a classical solution to the stationary problem
 \begin{align}
 \begin{cases}
 -\Delta_\Gamma \vp_\infty+W'(\vp_\infty)=\displaystyle{\frac{1}{|\Gamma|}\int_\Gamma W'(\vp_\infty)\,\mathrm{d}S,}\qquad \text{on}\ \Gamma,\\
  \text{subject to the constraint}\ \langle \vp_\infty\rangle_\Gamma = \langle \vp_0\rangle_\Gamma,
 \end{cases}
 \label{sta1}
 \end{align}
 $(u_\infty, \mu_\infty, \eta_\infty)$ are three constants, which  together with $v_\infty\in H^2(\Gamma)$  fulfil the following relations
 \begin{align}
 & u_\infty |\Omega|+ \int_\Gamma v_\infty\,\mathrm{d}S= \int_\Omega u_0\,\mathrm{d}x+ \int_\Gamma v_0\,\mathrm{d}S,\label{sta2}\\
 & \frac12\eta_\infty+\mu_\infty = \frac{1}{|\Gamma|}\int_\Gamma W'(\vp_\infty)\,\mathrm{d}S,\label{sta3}\\
 & 2\Delta_\Gamma v_\infty= \Delta_\Gamma\vp_\infty,\label{sta4}  \\
 & \eta_\infty|\Gamma|=\frac{4}{\delta}\int_\Gamma v_\infty\,\mathrm{d}S- \frac{2}{\delta}\int_\Gamma(1+\vp_0)\,\mathrm{d}S.\label{sta5}
 \end{align}
\end{theorem}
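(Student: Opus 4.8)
The plan is to establish convergence via the {\L}ojasiewicz--Simon method. Fix $t_0>0$; on $[t_0,+\infty)$ Theorem \ref{thm:reg}-(2) provides the uniform-in-time bounds of $(u,\varphi,\mu,v)$ in $H^1(\Omega)\times H^3(\Gamma)\times H^1(\Gamma)\times H^1(\Gamma)$ together with the uniform strict separation $\|\varphi(t)\|_{C(\Gamma)}\leq 1-\sigma_{t_0}$, whence also $\eta\in L^\infty(t_0,+\infty;H^1(\Gamma))$. From the energy equality \eqref{BELweak} and $A\geq 0$, the map $t\mapsto\mathcal{E}(u(t),\varphi(t),v(t))$ is non-increasing and bounded below, hence converges to some $\mathcal{E}_\infty$, and $\int_{t_0}^{+\infty}\mathcal{D}(t)\,\mathrm{d}t<+\infty$, where $\mathcal{D}:=D\|\nabla u\|_{\bm{L}^2(\Omega)}^2+\|\nabla_\Gamma\mu\|_{\bm{L}^2(\Gamma)}^2+\|\nabla_\Gamma\eta\|_{\bm{L}^2(\Gamma)}^2+A\|\eta-u\|_{L^2(\Gamma)}^2$. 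The decay assumption \eqref{decayA} with $\alpha>1$ yields $\int_{t_0}^{+\infty}A(t)\,\mathrm{d}t<+\infty$, so (using the uniform bound on $\|\eta-u\|_{L^2(\Gamma)}$) $q\in L^1(t_0,+\infty;L^2(\Gamma))$ and $g(t):=\int_t^{+\infty}\|q(s)\|_{L^2(\Gamma)}\,\mathrm{d}s\to 0$ as $t\to+\infty$. The $H^3(\Gamma)$-bound makes $\{\varphi(t):t\geq t_0\}$ relatively compact in $H^2(\Gamma)$ (and $\{u(t)\}$, $\{v(t)\}$ relatively compact in $L^2$), so the $\omega$-limit set is non-empty.

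Next I would characterize the $\omega$-limit set. If $(u_\infty,\varphi_\infty,v_\infty)$ lies in it, pick $t_n\to+\infty$ realizing the limit; since $\int\mathcal{D}<+\infty$ one may assume $\mathcal{D}(t_n)\to 0$, so $\nabla u(t_n)$, $\nabla_\Gamma\mu(t_n)$, $\nabla_\Gamma\eta(t_n)\to 0$ in $L^2$, which forces $u_\infty$ and $\mu_\infty:=\lim\mu(t_n)$, $\eta_\infty:=\lim\eta(t_n)$ to be constants (using the uniform $H^1$-bounds and Poincar\'e). Passing to the limit in \eqref{w1.c}--\eqref{w1.f} and in the mass identities \eqref{mass1}--\eqref{mass2} shows that $\varphi_\infty$ solves the stationary problem \eqref{sta1} with $\langle\varphi_\infty\rangle_\Gamma=\langle\varphi_0\rangle_\Gamma=:m$, and that $(u_\infty,\mu_\infty,\eta_\infty,v_\infty)$ satisfy \eqref{sta2}--\eqref{sta5}; in particular $\varphi_\infty\in\varPhi_m$, so it is strictly separated by Lemma \ref{lem:LS}-(1), and elliptic regularity (bootstrapping in \eqref{sta1}) gives $\varphi_\infty\in H^3(\Gamma)$. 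Since $W$ is real analytic, Lemma \ref{lem:LS}-(2) furnishes the {\L}ojasiewicz--Simon inequality \eqref{LSa} on some $H^2(\Gamma)$-neighbourhood of $\varphi_\infty$, with exponents $\chi\in(0,1/2)$ and $C,\ell>0$.

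The convergence of the whole solution reduces to $\varphi(t)\to\varphi_\infty$ in $H^2(\Gamma)$. Indeed, once this holds: $w:=v-v_\infty$ solves $\partial_t w-\tfrac{4}{\delta}\Delta_\Gamma w=-\tfrac{2}{\delta}\Delta_\Gamma(\varphi-\varphi_\infty)+q$ (using \eqref{sta4}) with right-hand side tending to $0$ in $L^2(\Gamma)$ and with $\langle w(t)\rangle_\Gamma\to 0$ (because $\tfrac{\mathrm{d}}{\mathrm{d}t}\langle v\rangle_\Gamma=|\Gamma|^{-1}\int_\Gamma q\in L^1$), so $v(t)\to v_\infty$ in $L^2(\Gamma)$ by a Poincar\'e/relaxation estimate; $\|u-\langle u\rangle_\Omega\|_{L^2(\Omega)}^2$ obeys $\tfrac{\mathrm{d}}{\mathrm{d}t}(\cdot)\leq -c\|u-\langle u\rangle_\Omega\|_{L^2(\Omega)}^2+C\|q\|_{L^2(\Gamma)}^2$ with $\|q\|_{L^2(\Gamma)}^2\in L^1$ and $\to 0$, while $\langle u\rangle_\Omega\to u_\infty$ (since $\tfrac{\mathrm{d}}{\mathrm{d}t}\langle u\rangle_\Omega=-|\Omega|^{-1}\int_\Gamma q\in L^1$), so $u(t)\to u_\infty$ in $L^2(\Omega)$; finally $\eta(t)\to\eta_\infty$ follows from the convergences of $v$ and $\varphi$, and $\mu(t)=-\Delta_\Gamma\varphi+W'(\varphi)-\tfrac12\eta\to\mu_\infty$ in $L^2(\Gamma)$ by the strict separation (which makes $W'(\varphi(t))\to W'(\varphi_\infty)$ in $L^2$). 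This gives \eqref{conv1}--\eqref{conv2}.

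The heart of the proof is then to show $\int_{t_0}^{+\infty}\|\nabla_\Gamma\mu(t)\|_{\bm{L}^2(\Gamma)}\,\mathrm{d}t<+\infty$: since $\|\partial_t\varphi\|_{(H^1(\Gamma))'}=\|\Delta_\Gamma\mu\|_{(H^1(\Gamma))'}\leq\|\nabla_\Gamma\mu\|_{\bm{L}^2(\Gamma)}$, this makes $\varphi(\cdot)$ Cauchy in $(H^1(\Gamma))'$, hence — interpolating with the uniform $H^3(\Gamma)$-bound — Cauchy in $H^2(\Gamma)$, the limit lying in the $\omega$-limit set and thus (by the argument below) equal to $\varphi_\infty$. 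The two ingredients are: (i) from \eqref{w1.d}, $-\Delta_\Gamma\varphi+W'(\varphi)-|\Gamma|^{-1}\int_\Gamma W'(\varphi)\,\mathrm{d}S=(\mu-\langle\mu\rangle_\Gamma)+\tfrac12(\eta-\langle\eta\rangle_\Gamma)$, so the right-hand side of \eqref{LSa} is $\leq C(\|\nabla_\Gamma\mu\|+\|\nabla_\Gamma\eta\|)\leq C\mathcal{D}^{1/2}$; (ii) a completion-of-squares computation (using $\mathcal{F}(\varphi,v)=E(\varphi)+\tfrac{\delta}{8}\|\eta\|_{L^2(\Gamma)}^2$ together with $\int_\Omega u+\int_\Gamma v=\mathrm{const}$ and $\langle\varphi\rangle_\Gamma=m$) gives the identity
\[
\mathcal{E}(t)-\mathcal{E}_\infty=\tfrac12\|u-u_\infty\|_{L^2(\Omega)}^2+\tfrac{\delta}{8}\|\eta-\eta_\infty\|_{L^2(\Gamma)}^2+\big(E(\varphi)-E(\varphi_\infty)\big)+(\eta_\infty-u_\infty)\int_\Gamma(v-v_\infty)\,\mathrm{d}S,
\]
where, by Poincar\'e's inequality, the first two terms are $\leq C\mathcal{D}(t)+Cg(t)^2$ and the last is $\leq Cg(t)$. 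Combining (i)--(ii) with \eqref{LSa}, and using that $\mathcal{D}$ is uniformly bounded on $[t_0,+\infty)$ and $\chi<1/2$, one deduces $0\leq\mathcal{E}(t)-\mathcal{E}_\infty\leq C\mathcal{D}(t)^{\beta}+Cg(t)$ with $\beta=\tfrac{1}{2(1-\chi)}\in(\tfrac12,1)$, valid whenever $\varphi(t)$ lies in the {\L}ojasiewicz neighbourhood of $\varphi_\infty$. Writing $\mathcal{D}=-\tfrac{\mathrm{d}}{\mathrm{d}t}(\mathcal{E}-\mathcal{E}_\infty)$ and running the standard differential-inequality argument (separating the times where $\mathcal{D}\geq\varepsilon_0$ — a set of finite measure, handled by Cauchy--Schwarz — from those where $\mathcal{E}-\mathcal{E}_\infty$ dominates $g$ and those where it does not, and using $g(t)\leq C(1+t)^{1-\alpha}$, which is exactly where $\alpha>1$ enters) yields $\int_{t_0}^{+\infty}\mathcal{D}(t)^{1/2}\,\mathrm{d}t<+\infty$, together with a continuation estimate showing the trajectory, having entered the {\L}ojasiewicz neighbourhood along $t_n$, never leaves it thereafter. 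The main obstacle I anticipate is precisely this non-local, coupled structure: because $\int_\Gamma v$ and $\int_\Omega u$ are not separately conserved, $\mathcal{E}$ is not the Cahn--Hilliard energy of a single reduced variable, and the discrepancy in the identity above involves the slowly-varying quantities $\langle u\rangle_\Omega-u_\infty$, $\langle\eta\rangle_\Gamma-\eta_\infty$, $\int_\Gamma(v-v_\infty)$; controlling these ``tail'' terms and proving them harmless in the {\L}ojasiewicz differential inequality is what forces the rate $\alpha>1$ in \eqref{decayA} and requires the separate relaxation estimates for $u$ and $v$ rather than a single monotone-quantity argument.
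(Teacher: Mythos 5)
Your proposal is correct and follows the same overall strategy as the paper: uniform regularity and strict separation from Theorem \ref{thm:reg}-(2), characterization of the $\omega$-limit set (the paper's Proposition \ref{pro:ome}), the {\L}ojasiewicz--Simon inequality of Lemma \ref{lem:LS}-(2) applied through the bound $\|-\Delta_\Gamma\vp+W'(\vp)-\langle W'(\vp)\rangle_\Gamma\|_{(H^1(\Gamma))'}\leq C(\|\nabla_\Gamma\mu\|_{\bm{L}^2(\Gamma)}+\|\nabla_\Gamma\eta\|_{\bm{L}^2(\Gamma)})$, and a perturbed {\L}ojasiewicz differential inequality in which the decay $\alpha>1$ of $A(t)$ enters exactly where you say it does. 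The implementations differ in one genuine respect worth recording. You work with the full energy $\mathcal{E}$ and decompose $\mathcal{E}(t)-\mathcal{E}_\infty$ by completing squares; your identity, including the cross term $(\eta_\infty-u_\infty)\int_\Gamma(v-v_\infty)\,\mathrm{d}S$, is correct (it follows from \eqref{mass1}, \eqref{sta2} and $\langle\vp\rangle_\Gamma\equiv\langle\vp_0\rangle_\Gamma$), and you then control the drifting means and the cross term by the tail $g(t)=\int_t^{+\infty}\|q(s)\|_{L^2(\Gamma)}\,\mathrm{d}s\leq C(1+t)^{1-\alpha}$. The paper instead builds the auxiliary functional $\widetilde{\mathcal{E}}$ in \eqref{auxE} out of the \emph{mean-free} parts $u-\langle u\rangle_\Omega$ and $\eta-\langle\eta\rangle_\Gamma$, so that the cross term never appears, the perturbation reduces to $\widetilde{C}[A(t)]^2$ (inequality \eqref{auxE2}), and $\widetilde{\mathcal{E}}-\widehat{\mathcal{E}}_\infty$ splits cleanly into Poincar\'e-controlled pieces plus $E(\vp)-E(\vp_\infty)$; the conclusion is then drawn from the integral-form Lemma \ref{f} applied to $Z(t)=Y(t)+(1+t)^{(1-2\alpha)(1-\rho)}$ rather than from your differential-inequality (Ben Hassen/Huang--Tak\'a\v{c}) version. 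Two further cosmetic divergences: the paper deduces convergence of $v$ and $u$ from $\partial_t v,\partial_t u\in L^1$ in dual norms (a byproduct of $Y\in L^1$), whereas you run separate parabolic relaxation estimates after $\vp$ has converged; and the paper secures uniform {\L}ojasiewicz constants by a finite cover of the compact $\omega$-limit set, whereas you use the classical ``once the trajectory enters the neighbourhood it never leaves'' continuation argument. All of these are standard, interchangeable variants; your bookkeeping via $g(t)$ buys a more self-contained argument at the cost of carrying the cross term, while the paper's auxiliary energy buys a cleaner inequality at the cost of the preliminary computation \eqref{auxE1}.
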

\begin{remark}
The decay property \eqref{decayA} of $A(t)$ plays a crucial role in the proof of Theorem \ref{thm:conv}.  It serves as a sufficient condition to obtain the \emph{uniqueness of asymptotic limit} for every global weak solution $(u(t),\vp(t), v(t), \mu(t), \eta(t))$  as $t\to +\infty$. If $A(t)$ satisfies the assumption \textbf{(H5)} and it is asymptotically
autonomous  but  does not decay fast enough (for instance,  $\lim_{t\to+\infty} A(t)=A_\infty$ for some constant $A_\infty \geq 0$, without a ``good'' convergence rate), then we can only prove a weaker result, that is, the \emph{sequential convergence} for $(u(t),\vp(t), v(t), \mu(t), \eta(t))$ as $t\to +\infty$ (see Proposition \ref{pro:ome}). Besides, the decay property \eqref{decayA} also enables us to derive some estimates on the decay rate of global weak solutions (see Corollary \ref{cor:rate}).
\end{remark}

\subsubsection{The reduced system \eqref{r1.a}--\eqref{r1.f}}

Concerning the reduced system \eqref{r1.a}--\eqref{r1.f}, we are mainly interested in the non-equilibrium case with a mass exchange term $q$ given by \eqref{q2}, i.e., a reaction type source term that is of biological interest \cite{GKRR16}.

\begin{definition}[Weak solutions]
\label{def:rweak}
Let $T>0$. We call the functions $(u,\varphi,\mu,v,\eta)$ a weak solution to problem \eqref{r1.a}--\eqref{rini2} on the interval $[0,T]$, if the following properties are fulfilled:
\begin{itemize}
\item[(1)] $(u,\varphi,\mu,v,\eta)$ satisfy
\begin{align}
& u\in L^\infty(0,T)\cap H^1(0,T),\notag\\
& \varphi \in L^{\infty}(0,T;H^1(\Gamma))\cap L^{4}(0,T;H^2(\Gamma))\cap L^2(0,T;W^{2,p}(\Gamma)) \cap H^{1}(0,T;(H^1(\Gamma))'),\notag \\
&\mu \in L^{2}(0,T;H^1(\Gamma)),\qquad F'(\varphi)\in L^2(0,T;L^p(\Gamma)),\notag \\
& v,\,\eta  \in L^{\infty}(0,T;L^2(\Gamma))\cap L^{2}(0,T;H^1(\Gamma)) \cap H^1(0,T;(H^1(\Gamma))'),\notag\\
&\varphi\in L^{\infty}(\Gamma\times (0,T))\ \textrm{and}\ \ |\varphi(x,t)|<1\ \ \textrm{a.e.\ on}\ \Gamma\times(0,T),\notag
\end{align}
where $p\in [2,+\infty)$ is arbitrary.
\item[(2)] For all $\zeta\in L^2(0,T;H^1(\Gamma))$, it holds
\begin{subequations}
			\begin{alignat}{3}
& |\Omega|u(t) + \int_\Gamma v(t)\,\mathrm{d}S=M,\quad \forall\,t\in [0,T], &\label{rew1.a}\\
& \int_0^T \left \langle \partial_t \varphi,\zeta\right \rangle_{(H^1(\Gamma))',\,H^1(\Gamma)}\,\mathrm{d}t
 = - \int_0^T \!\int_\Gamma \nabla_\Gamma \mu \cdot\nabla_\Gamma \zeta\,\mathrm{d}S \mathrm{d}t, &\label{rew1.c} \\
			&\ \, \mu=- \Delta_\Gamma \varphi + W'(\varphi)-\frac{1}{2}\eta,\qquad \text{a.e. on}\ \Gamma\times (0,T),&\label{rew1.d}\\
			&\int_0^T\left \langle\partial_t v,\zeta\right \rangle_{(H^1(\Gamma))',\,H^1(\Gamma)}\,\mathrm{d}t +  \int_0^T \!\int_\Gamma \nabla_\Gamma \eta\cdot \nabla_\Gamma \zeta\,\mathrm{d}S \mathrm{d}t = \int_0^T\! \int_\Gamma q \zeta \,\mathrm{d}S \mathrm{d}t,& \label{rew1.e}\\
            &\ \, \eta = \frac{2}{\delta}(2v-1-\varphi),\qquad \text{a.e. on}\ \Gamma\times (0,T),& \label{rew1.f}
\end{alignat}
\end{subequations}
where $M>0$ is the initial total cholesterol mass
$$
M=|\Omega|u_0+\int_\Gamma v_0\,\mathrm{d}S.
$$
\item[(3)] The continuity properties $u\in C([0,T])$, $\vp\in C([0,T];H^1(\Gamma))$, $v\in C([0,T];L^2(\Gamma))$ hold and the initial conditions \eqref{rini1}--\eqref{rini2} are satisfied.
\end{itemize}
\end{definition}

Then we have

\bt[Existence and uniqueness] \label{thm:rweak}
Suppose that the assumptions \textbf{(H1)}, \textbf{(H3)} are satisfied  and the mass exchange term $q$ takes the form of \eqref{q2} with $B_1, B_2> 0$.
 Let $T>0$ be arbitrary. For any initial datum $(u_0,\,\varphi_0,\,v_0)$ satisfying
\begin{align}
 & \varphi_{0}\in H^1(\Gamma),\ \  v_{0}\in L^2(\Gamma)\ \ \text{with}\ \ \|  \varphi_{0} \|_{L^{\infty}(\Gamma)} \le 1, \ \  |\langle\varphi_{0}\rangle_\Gamma|<1,
 \label{rcini1}\\
 & u_0= \frac{1}{|\Omega|}\Big(M- \int_\Gamma v_0\,\mathrm{d}S\Big)\in \Big[0,\ \frac{M}{|\Omega|}\Big],
 \label{rcini2}
 \end{align}
where $M>0$ is the initial total cholesterol mass,
 the initial boundary value problem \eqref{r1.a}--\eqref{rini2} admits a unique global weak solution $(u,\varphi,\mu,v,\eta)$ on $[0,T]$ in the sense of Definition \ref{def:rweak}. Moreover, we have
 \begin{align}
  u(t)\in \Big[0,\ \frac{M}{|\Omega|}\Big],\qquad \forall\, t\geq 0, \label{bdd-u}
 \end{align}
 and the following dissipative estimate
 \begin{align}
 &\|\vp(t)\|_{H^1(\Gamma)}^2+ \|v(t)\|_{L^2(\Gamma)}^2 +\int_{t}^{t+1} \big(\|\nabla_\Gamma \mu(\tau)\|^2_{\bm{L}^2(\Gamma)} + \|\nabla_\Gamma(2v(\tau)-\vp(\tau))\|_{\bm{L}^2(\Gamma)}^2\big)\,\mathrm{d}\tau\non\\
 &\quad \leq K_1e^{-K_2 t} + K_3,\qquad  \forall\, t\geq 0,
 \label{es-dissA}
\end{align}
where $K_1>0$ depends on $\|\vp_0\|_{H^1(\Gamma)}$, $\|v_0\|_{L^2(\Gamma)}$, $\int_\Gamma F(\vp_0)\,\mathrm{d}S$ and $\Gamma$, $K_2, K_3>0$ may depend on $M$, $B_1$, $B_2$, $\theta$, $\theta_0$, $\Omega$, $\Gamma$, $\delta$, $ \langle\varphi_0 \rangle_\Gamma$ and $\min_{r\in [-1,1]}F(r)$, but not on the norms $\|\vp_0\|_{H^1(\Gamma)}$ and $\|v_0\|_{L^2(\Gamma)}$.
\et

In view of the relation \eqref{rew1.a}, we can rewrite the reduced system \eqref{r1.a}--\eqref{r1.f} into a simpler form that only involves the unknowns $(\vp,v)$ on $\Gamma$:
\begin{subequations}
	\begin{alignat}{3}
	&\partial_t\varphi =\Delta_\Gamma \mu,
    &  \qquad\qquad\text{on}\ \Gamma\times(0,T),
    \label{nr1.c}\\
    &\mu=- \Delta_\Gamma \varphi + W'(\varphi)-\frac{1}{2}\eta,
    &  \qquad\qquad\text{on}\ \Gamma\times(0,T),
    \label{nr1.d}\\
    &\partial_t v=\Delta_\Gamma \eta +q(v),
    &  \qquad\qquad\text{on}\ \Gamma\times(0,T),
    \label{nr1.e}\\
    &\eta=\frac{4}{\delta}\Big(v-\frac{1+\varphi}{2}\Big),
    &  \qquad\qquad\text{on}\ \Gamma\times(0,T),
    \label{nr1.f}
	\end{alignat}
\end{subequations}
where under the specific choice of the mass exchange term \eqref{q2}, we find
\begin{align}
 & q(v) =\frac{B_1}{|\Omega|} \Big( M-\int_\Gamma v\,\mathrm{d}S  \Big)(1-v)-B_2 v.  \label{nq2}
\end{align}

Our next aim is to study the long-time behavior of global weak solutions to the reduced system \eqref{nr1.c}--\eqref{nr1.f} in terms of the global attractor.
To this end, for any $M>0$ and $m\in [0,1)$, we introduce the phase space
$$
\mathcal{V}_{M,m}=\left\{(\vp,v)\in H^1(\Gamma)\times L^2(\Gamma)\ \Big|\
\|\vp\|_{L^\infty(\Gamma)}\leq 1,\ |\langle \vp\rangle_\Gamma|\leq m,\ 0\leq \int_\Gamma v\,\mathrm{d}S\leq M\right\}
$$
endowed with the metric
$$
\mathrm{dist}_{\mathcal{V}_{M,m}}\big((\vp_1,v_1),(\vp_2,v_2)\big)= \|\vp_1-\vp_2\|_{H^1(\Gamma)} + \|v_1-v_2\|_{L^2(\Gamma)}.
$$
It is straightforward to verify that $\mathcal{V}_{M,m}$ is a complete metric space.

Theorem \ref{thm:rweak} enables us to show that
\begin{corollary}\label{cor:DS}
Suppose that the assumptions \textbf{(H1)}, \textbf{(H3)} are satisfied and $B_1,B_2>0$. The system \eqref{nr1.c}--\eqref{nq2} generates a strongly continuous semigroup
$$
\mathcal{S}(t):\ \mathcal{V}_{M,m}\to \mathcal{V}_{M,m} \quad \text{such that}\quad \mathcal{S}(t)(\vp_0,v_0)=(\vp,v),\quad \forall\, t\geq0,
$$
where $(\vp,v)$ is the unique global weak solution of  \eqref{nr1.c}--\eqref{nq2} corresponding to the initial datum $(\vp_0,v_0)$. Moreover, it holds
\begin{align}
\mathcal{S}(t)\in C(\mathcal{V}_{M,m},\,\mathcal{V}_{M,m}),\qquad \forall\, t\geq 0. \label{conti}
\end{align}
\end{corollary}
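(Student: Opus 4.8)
The plan is to verify the three defining properties of a strongly continuous semigroup, namely (i) $\mathcal{S}(0)=\mathrm{Id}$ and $\mathcal{S}(t+s)=\mathcal{S}(t)\mathcal{S}(s)$, (ii) $\mathcal{S}(t)$ maps $\mathcal{V}_{M,m}$ into itself for every $t\geq 0$, and (iii) continuity in $(t,(\vp_0,v_0))$, together with the stated continuity \eqref{conti} for each fixed $t$. Well-definedness and the semigroup law follow immediately from Theorem \ref{thm:rweak}: for every initial datum $(\vp_0,v_0)\in\mathcal{V}_{M,m}$ there is a \emph{unique} global weak solution on $[0,\infty)$ (apply the theorem on $[0,T]$ for every $T>0$ and glue by uniqueness), so $\mathcal{S}(t)$ is unambiguously defined and the cocycle property $\mathcal{S}(t+s)=\mathcal{S}(t)\mathcal{S}(s)$ is a consequence of uniqueness (the restriction of the solution to $[s,\infty)$, re-centered at time $s$, is the solution with initial datum $\mathcal{S}(s)(\vp_0,v_0)$, which is legitimate because by the continuity assertions in Definition \ref{def:rweak}-(3) and the invariance step below, $\mathcal{S}(s)(\vp_0,v_0)$ again lies in $\mathcal{V}_{M,m}$).

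For invariance (ii): given $(\vp_0,v_0)\in\mathcal{V}_{M,m}$, I need $\mathcal{S}(t)(\vp_0,v_0)=(\vp(t),v(t))\in\mathcal{V}_{M,m}$ for all $t\geq0$. The bound $\|\vp(t)\|_{L^\infty(\Gamma)}\leq 1$ holds for a.e.\ $t$ by the weak-solution definition, and extends to all $t$ by the continuity $\vp\in C([0,T];H^1(\Gamma))\hookrightarrow C([0,T];C(\Gamma))$ (using $H^1(\Gamma)\hookrightarrow C(\Gamma)$ is false in 2D, so instead I would use that the a.e.\ bound plus weak-$*$ lower semicontinuity of the $L^\infty$ norm under the $H^1$-continuity of $t\mapsto\vp(t)$ gives $\|\vp(t)\|_{L^\infty(\Gamma)}\leq 1$ for every $t$). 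The mass conservation \eqref{mass2}, valid for this reduced system as well, gives $\langle\vp(t)\rangle_\Gamma=\langle\vp_0\rangle_\Gamma$, so $|\langle\vp(t)\rangle_\Gamma|\leq m$ is preserved. Finally, the relation \eqref{rew1.a}, i.e.\ $|\Omega|u(t)+\int_\Gamma v(t)\,\mathrm{d}S=M$, together with \eqref{bdd-u} stating $u(t)\in[0,M/|\Omega|]$, yields exactly $0\leq\int_\Gamma v(t)\,\mathrm{d}S\leq M$. Hence $\mathcal{V}_{M,m}$ is positively invariant.

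For the continuity (iii) and \eqref{conti}: fix $t\geq0$ and take a sequence $(\vp_0^n,v_0^n)\to(\vp_0,v_0)$ in $\mathcal{V}_{M,m}$, i.e.\ in $H^1(\Gamma)\times L^2(\Gamma)$. Write $(\vp^n,v^n)$, $(\vp,v)$ for the corresponding solutions. The strategy is a continuous-dependence estimate for differences of weak solutions: subtract the two systems \eqref{nr1.c}--\eqref{nq2}, test the $\vp$-difference equation with $(-\Delta_\Gamma)^{-1}$ applied to $\vp^n-\vp$ (projected to mean zero, handling the possibly nonzero mean of the difference separately via the conserved means), test the $v$-difference equation with $v^n-v$, exploit the convexity of $F$ (so that $\langle F'(\vp^n)-F'(\vp),\vp^n-\vp\rangle\geq0$, absorbing only the concave $-\frac{\theta_0}{2}r^2$ part), control the coupling terms through $\eta$ using \eqref{nr1.f}, and treat the reaction term $q(v)$ in \eqref{nq2} using that it is Lipschitz on the relevant bounded set (here the uniform bounds from \eqref{es-dissA} and \eqref{bdd-u} are what make $q$ effectively Lipschitz, since $v\mapsto(M-\int_\Gamma v)(1-v)$ is only locally Lipschitz). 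This produces a Gr\"onwall inequality of the form $\frac{d}{dt}\big(\|\vp^n-\vp\|_{(H^1_{(0)})'}^2+|\langle\vp^n-\vp\rangle_\Gamma|^2+\|v^n-v\|_{L^2(\Gamma)}^2\big)\leq C\big(\cdots\big)$, giving convergence in a weak norm; upgrading to the $H^1(\Gamma)$-norm for $\vp$ requires an additional energy-type estimate (testing the $\vp$-difference equation with $\vp^n-\vp$ itself and using the instantaneous $H^3$-regularity / strict separation from Theorem \ref{thm:reg}'s analogue for the reduced system to bound $W''(\vp)$), so that $\|\vp^n(t)-\vp(t)\|_{H^1(\Gamma)}+\|v^n(t)-v(t)\|_{L^2(\Gamma)}\to0$. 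Joint continuity in $(t,(\vp_0,v_0))$ then follows by combining this with the time-continuity $t\mapsto(\vp(t),v(t))$ from Definition \ref{def:rweak}-(3), via a standard triangle-inequality argument. I expect the \textbf{main obstacle} to be the continuous-dependence estimate in the $H^1(\Gamma)$-topology rather than merely in a negative-order norm: the singular potential forces one to invoke the strict separation property to control $W''(\vp)$, and the nonlocal, only locally Lipschitz reaction term $q(v)$ must be tamed using the a priori bounds — once those two points are in place the Gr\"onwall argument is routine.
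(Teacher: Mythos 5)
Your proposal is correct and follows the same overall skeleton as the paper: well-definedness and the semigroup law from uniqueness, positive invariance of $\mathcal{V}_{M,m}$ from the $L^\infty$ bound, the conserved mean $\langle\vp(t)\rangle_\Gamma=\langle\vp_0\rangle_\Gamma$ and the pair \eqref{rew1.a}, \eqref{bdd-u}, and continuity via the continuous dependence estimate in the weak norm $(H^1(\Gamma))'\times L^2(\Gamma)$ (this is exactly \eqref{conti1a}, already proved in the uniqueness step of Theorem \ref{thm:rweak}, including the $|\langle\vp(0)\rangle_\Gamma|$ term you anticipate) combined with the instantaneous regularity and strict separation for the reduced system, which the paper isolates as Lemma \ref{lem:comp}. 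The one place where you diverge is the upgrade from the negative-order norm to the $H^1(\Gamma)$ topology: you propose a second energy-type estimate obtained by testing the difference equation with $\vp^{(n)}-\vp$, which requires the separation property to control $W''$ and a further Gr\"onwall argument, whereas the paper avoids any additional PDE estimate entirely and simply interpolates, $\|\vp^{(n)}(\widehat{t})-\vp(\widehat{t})\|_{H^1(\Gamma)}\leq C\|\vp^{(n)}(\widehat{t})-\vp(\widehat{t})\|_{H^3(\Gamma)}^{1/2}\|\vp^{(n)}(\widehat{t})-\vp(\widehat{t})\|_{(H^1(\Gamma))'}^{1/2}$, using that Lemma \ref{lem:comp} gives an $H^3$ bound at time $\widehat{t}$ that is \emph{uniform in} $n$. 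Your route is workable but strictly more labor (and your sketch of it is the vaguest part of the proposal); the interpolation trick is worth adopting because it converts the smoothing estimate directly into the topology upgrade with no further analysis of the singular potential.
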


We then call $(\mathcal{S}(t), \mathcal{V}_{M,m})$ a dynamical system. Now let us introduce the notion of a global attractor. Roughly speaking, the global attractor is the smallest bounded set that attracts bounded sets of trajectories as time goes to infinity. It provides a suitable way to study  the long-time behavior of global solutions in the framework of infinite dimensional dynamical systems \cite{MZ08,T}.
\begin{definition}[Global attractor]
A nonempty compact set $\mathcal{A}_{M,m}\subset \mathcal{V}_{M,m}$ is said to be the global attractor for the dynamical system $(\mathcal{S}(t), \mathcal{V}_{M,m})$, if
\begin{itemize}
\item[(1)] $\mathcal{A}_{M,m}$ is invariant such that $\mathcal{S}(t)\mathcal{A}_{M,m} = \mathcal{A}_{M,m}$ for all $t\geq 0$;
\item[(2)] $\mathcal{A}_{M,m}$ is attracting, that is, for every bounded set $\mathcal{B}\subset \mathcal{V}_{M,m}$, it holds
    $$
    \lim_{t\to+\infty} \mathrm{dist}\big( \mathcal{S}(t)\mathcal{B}, \mathcal{A}_{M,m}\big)=0,
    $$
    where the Hausdorff semidistance between two subsets $\mathcal{A}, \mathcal{B}\subset \mathcal{V}_{M,m}$ is defined by
    $$
    \mathrm{dist}(\mathcal{A}, \mathcal{B})= \sup_{a\in \mathcal{A}}\inf_{b\in \mathcal{B}}\mathrm{dist}_{\mathcal{V}_{M,m}}(a,b).
    $$
\end{itemize}
\end{definition}

We are in a position to state the main result:
\begin{theorem}[Existence of the global attractor]\label{thm:att}
Suppose that the assumptions \textbf{(H1)}--\textbf{(H3)} are satisfied, $B_1,B_2>0$, $M>0$ and $m\in [0,1)$ are given constants. Then the dynamical system $(\mathcal{S}(t), \mathcal{V}_{M,m})$ possesses a unique connected global attractor $\mathcal{A}_{M,m}$, which is bounded in $H^3(\Gamma)\times H^1(\Gamma)$.
\end{theorem}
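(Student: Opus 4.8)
The plan is to apply the classical theory of infinite-dimensional dynamical systems: to establish the existence of the global attractor for $(\mathcal{S}(t),\mathcal{V}_{M,m})$ it suffices to verify that (i) the semigroup possesses a bounded absorbing set in $\mathcal{V}_{M,m}$ and (ii) the semigroup is asymptotically compact. Since $\mathcal{S}(t)$ is already known to be strongly continuous by Corollary \ref{cor:DS}, these two ingredients yield the existence of a connected global attractor via the standard abstract results (see \cite{T,MZ08}). I would organize the proof in three steps.

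\emph{Step 1: Absorbing set.} I would exploit the dissipative estimate \eqref{es-dissA} from Theorem \ref{thm:rweak}. Since $K_1$ depends only on $\|\vp_0\|_{H^1(\Gamma)}$, $\|v_0\|_{L^2(\Gamma)}$, $\int_\Gamma F(\vp_0)\,\mathrm{d}S$ and $\Gamma$, while $K_2,K_3$ are independent of these norms, for any bounded set $\mathcal{B}\subset\mathcal{V}_{M,m}$ there is a uniform bound on $K_1$ (using that $F$ is bounded on $[-1,1]$, so $\int_\Gamma F(\vp_0)\,\mathrm{d}S$ is automatically controlled on $\mathcal{V}_{M,m}$). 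Hence, choosing $R_0^2 = 2K_3$, say, the estimate \eqref{es-dissA} gives a time $t_0 = t_0(\mathcal{B})$ such that $\|\vp(t)\|_{H^1(\Gamma)}^2 + \|v(t)\|_{L^2(\Gamma)}^2 \leq R_0^2$ for all $t\geq t_0$. Together with the a priori constraints encoded in $\mathcal{V}_{M,m}$ (namely $\|\vp\|_{L^\infty(\Gamma)}\leq 1$, $|\langle\vp\rangle_\Gamma|\leq m$, $0\leq\int_\Gamma v\,\mathrm{d}S\leq M$, all preserved by the flow thanks to \eqref{bdd-u}, \eqref{mass2} and the $L^\infty$ bound \eqref{ues1}), this shows that the set $\mathcal{B}_0 = \{(\vp,v)\in\mathcal{V}_{M,m} : \|\vp\|_{H^1(\Gamma)}^2 + \|v\|_{L^2(\Gamma)}^2\leq R_0^2\}$ is a bounded absorbing set.

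\emph{Step 2: Asymptotic compactness via smoothing.} This is where the instantaneous regularity results play the decisive role. Starting from any $(\vp_0,v_0)\in\mathcal{B}_0$, I would invoke the regularity estimates analogous to Theorem \ref{thm:reg}-(2), adapted to the reduced system \eqref{nr1.c}--\eqref{nq2} (the reduced system has exactly the same surface structure, with $u$ replaced by a bounded function of $\int_\Gamma v\,\mathrm{d}S$, so the arguments carry over; indeed the dissipative estimate \eqref{es-dissA} already provides the uniform-in-time energy bounds needed to run them). These give, for $t\geq 1$ say, a uniform bound $\|\vp(t)\|_{H^3(\Gamma)} + \|v(t)\|_{H^1(\Gamma)}\leq C(R_0)$ together with a uniform strict separation $\|\vp(t)\|_{C(\Gamma)}\leq 1-\sigma$. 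Consequently $\mathcal{S}(t)\mathcal{B}_0$ is, for $t\geq 1$, contained in a bounded subset of $H^3(\Gamma)\times H^1(\Gamma)$, which is relatively compact in $\mathcal{V}_{M,m}$ by the compact embeddings $H^3(\Gamma)\hookrightarrow\hookrightarrow H^1(\Gamma)$ and $H^1(\Gamma)\hookrightarrow\hookrightarrow L^2(\Gamma)$. This compactness, combined with the existence of the bounded absorbing set $\mathcal{B}_0$, yields asymptotic compactness of $\mathcal{S}(t)$; in fact it shows $\mathcal{S}(t)$ is eventually a compact map on bounded sets.

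\emph{Step 3: Conclusion.} Having verified the hypotheses of the classical existence theorem for global attractors (strong continuity, bounded absorbing set, asymptotic compactness), I conclude that $(\mathcal{S}(t),\mathcal{V}_{M,m})$ possesses a unique global attractor $\mathcal{A}_{M,m}$, given by the $\omega$-limit set of the absorbing set $\mathcal{B}_0$. Since $\mathcal{A}_{M,m}\subset\omega(\mathcal{B}_0)\subset\overline{\mathcal{S}(1)\mathcal{B}_0}$ and $\mathcal{S}(1)\mathcal{B}_0$ is bounded in $H^3(\Gamma)\times H^1(\Gamma)$, the attractor inherits this boundedness. Finally, connectedness follows from the standard argument: $\mathcal{V}_{M,m}$ is connected (it is convex in the $v$-component and the constraint set on $\vp$ is connected) and $\mathcal{S}(t)$ is continuous, so $\mathcal{A}_{M,m}$ is connected as well. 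The main obstacle I anticipate is Step 2 — one must carefully check that all the uniform-in-time regularity and strict-separation estimates from the full system genuinely transfer to the reduced system with the reaction-type source $q(v)$ in \eqref{nq2}, since $q(v)$ is quadratic in $v$ and the energy is no longer monotone; here the dissipative structure captured in \eqref{es-dissA} (rather than energy monotonicity) is what must be leveraged to close the higher-order a priori bounds uniformly in time.
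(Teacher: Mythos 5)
Your proposal is correct and follows essentially the same route as the paper: an absorbing set from the dissipative estimate \eqref{es-dissA}, asymptotic compactness from the instantaneous $H^3(\Gamma)\times H^1(\Gamma)$ regularity and strict separation for $t\geq t_0$ (the paper's Lemma \ref{lem:comp}, proved by transferring the argument of Section \ref{sec:rec} to the reduced system exactly as you anticipate), and then the classical existence theorem from \cite{T}. The technical obstacle you flag at the end is precisely the content of Lemma \ref{lem:comp}, and the paper resolves it the same way you propose, by leveraging \eqref{es-dissA} in place of energy monotonicity.
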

\begin{remark}
Due to the validity of the strict separation property and the regularity of weak solutions (see Lemma \ref{lem:comp}), we can apply the general approach in \cite{MZ08} to show that the dynamical system $(\mathcal{S}(t), \mathcal{V}_{M,m})$ indeed has an exponential attractor, which further implies that the global attractor $\mathcal{A}_{M,m}$ has finite fractal dimension in $\mathcal{V}_{M,m})$. This issue will be illustrated in a future study.
\end{remark}

\section{Well-posedness of the Full Bulk-Surface Coupled System}
\setcounter{equation}{0}
\label{sec:wellBS}
\subsection{Existence and uniqueness}
In what follows, we prove Theorem \ref{thm:weak} on the existence and uniqueness of a global weak solution to the full bulk-surface coupled system \eqref{1.a}--\eqref{ini2}. The proof consists of several steps.

\subsubsection{A regularized system}
Concerning the singular potential function $W$ satisfying \textbf{(H1)}, without loss of generality, we assume that $F(0)=0$. Then we approximate the singular derivative $F'$ as in \cite{FG12,GGM2017}:
\begin{equation}
F'_ {\kappa}(r)
=\left\{
\begin{aligned}
&F'(-1+\kappa) + F''(-1+\kappa)(r+1-\kappa),\quad r<-1+\kappa,\\
&F'(r),\qquad\qquad\qquad\qquad\qquad\qquad\qquad\ \ |r|\leq 1-\kappa,\\
&F'(1-\kappa) + F''(1-\kappa)(r-1+\kappa),\qquad\ \  r>1-\kappa,
\end{aligned}
\right.
\label{vF}
\end{equation}
for a sufficiently small constant $\kappa\in(0,\,r_0)$. Define
$$
F_{\kappa}(r)=\int_0^{r}F'_{\kappa}(s) \, \mathrm{d}s,\qquad W_\kappa(r)=F_{\kappa}(r)-\frac{\theta_0}{2}r^2,\qquad \forall\, r\in \mathbb{R}.
$$
We can verify that $F_{\kappa}''(r)\ge \theta>0$ and $F_ {\kappa}(r)\geq -L$ for all $r\in \mathbb{R}$, where $L>0$ is a constant independent of $\kappa$. Moreover, it holds $F_{\kappa}(r)\leq F(r)$ for $r\in [-1,1]$ (see, e.g., \cite{FG12}).

For every $\kappa\in(0,\,r_0)$, let us consider the regularized problem:
\begin{subequations}
	\begin{alignat}{3}
	& \partial_t u=D\Delta u,
    & \qquad\qquad  \text{in}\ \Omega\times(0,T), \label{a1.a}\\
    & D\partial_{\bm{n}} u=-q,
    &  \qquad\qquad\text{on}\ \Gamma\times(0,T), \label{a1.b}\\
    &\partial_t\varphi =\Delta_\Gamma \mu,
    &  \qquad\qquad\text{on}\ \Gamma\times(0,T), \label{a1.c}\\
    &\mu=-  \Delta_\Gamma \varphi + W_\kappa'(\varphi)-\frac{1}{2}\eta,
    &  \qquad\qquad\text{on}\ \Gamma\times(0,T),\label{a1.d}\\
    &\partial_t v=\Delta_\Gamma \eta +q,
    &  \qquad\qquad\text{on}\ \Gamma\times(0,T), \label{a1.e}\\
    &\eta=\frac{4}{\delta}\Big(v-\frac{1+\varphi}{2}\Big),
    &  \qquad\qquad\text{on}\ \Gamma\times(0,T),\label{a1.f}
	\end{alignat}
\end{subequations}
subject to the initial conditions
\begin{subequations}
\begin{alignat}{3}
&u|_{t=0}=u_{0}(x),&\qquad\qquad \text{in}\ \Omega,\label{aini1}\\
&\varphi|_{t=0}=\varphi_{0}(x), \quad  v|_{t=0}=v_{0}(x), &\qquad \qquad \text{on}\ \Gamma.
\label{aini2}
\end{alignat}
\end{subequations}
The system \eqref{a1.a}--\eqref{a1.f} is associated with the following modified free energy functional:
\begin{align}
\mathcal{E}_\kappa(u,\varphi,v)= \frac12\int_\Omega u^2\,\mathrm{d}x+ \mathcal{F}_\kappa(\varphi,v),
\label{mE1}
\end{align}
where
\begin{align}
\mathcal{F}_\kappa(\varphi,v) := \int_\Gamma \Big[\frac{1}{2}|\nabla_\Gamma\varphi|^2+W_\kappa(\varphi) +\frac{2}{\delta}\Big(v-\frac{1+\varphi}{2}\Big)^2\Big]\,\mathrm{d}S.
\label{mE2}
\end{align}

The existence of a global weak solution to the regularized problem \eqref{a1.a}--\eqref{aini2} can be proved by using a suitable Faedo-Galerkin approximation scheme as in \cite[Section 3]{AK20}. For the sake of completeness, we sketch the main steps below.

\subsubsection{The Galerkin approximation}
 As in \cite{AK20}, we denote by
$\{\xi_i\}_{i\in \mathbb{Z}^+}$ the family of eigenfunctions to the minus Laplace operator $-\Delta$ on $\Omega$ subject to the homogeneous Neumann boundary condition and by $\{\zeta_i\}_{i\in \mathbb{Z}^+}$ the family of eigenfunctions to the minus Laplace-Beltrami operator $-\Delta_\Gamma$ on $\Gamma$. Without loss of generality, we assume that $\|\xi_i\|_{L^2(\Omega)}=\|\zeta_i\|_{L^2(\Gamma)}=1$ for $i=1,2,\cdots$. Thus, $\{\xi_i\}_{i\in \mathbb{Z}^+}$ (resp. $\{\zeta_i\}_{i\in \mathbb{Z}^+}$) forms an orthonormal basis of $L^2(\Omega)$ (resp. $L^2(\Gamma)$)  and it is also an orthogonal basis of $H^1(\Omega)$ (resp. $H^1(\Gamma)$). For any given positive integer $n$, we look for functions $(u^n_\kappa,\varphi^n_\kappa,v^n_\kappa,\mu^n_\kappa,\eta^n_\kappa)$ of the form
$$
u^n_\kappa(x,t)=\sum_{i=1}^n a_i(t)\xi_i(x),\qquad \varphi^n_\kappa(x,t)=\sum_{i=1}^n b_i(t)\zeta_i(x),
$$
$$
v^n_\kappa(x,t)=\sum_{i=1}^n c_i(t)\zeta_i(x),\qquad \mu^n_\kappa(x,t)=\sum_{i=1}^n d_i(t)\zeta_i(x),
$$
and
\begin{align*}
 \eta^n_\kappa(x,t)
&= \frac{2}{\delta}\left(2v^n_\kappa-1-\varphi^n_\kappa \right)\\
&=\frac{2}{\delta}\left[2c_1(t)-\sqrt{|\Gamma|}-b_1(t)\right]\zeta_1(x)+
\frac{2}{\delta}\sum_{i=2}^n \big[2c_i(t)-b_i(t)\big]\zeta_i(x),
\end{align*}
which are elements of the finite dimensional spaces
$$
V_{n,\Omega}=\mathrm{span}\{\xi_1,\cdots,\xi_n\},\qquad V_{n,\Gamma}=\mathrm{span}\{\zeta_1,\cdots,\zeta_n\}.
$$
We denote by $\mathbb{P}_{V_{n,\Omega}}:\, L^2(\Omega)\to V_{n,\Omega}$ and $\mathbb{P}_{V_{n,\Gamma}}:\, L^2(\Gamma)\to V_{n,\Gamma}$ the $L^2$-orthogonal projections in $\Omega$ and on $\Gamma$, respectively.

The Galerkin scheme (written in a weak formulation) for the regularized problem \eqref{a1.a}--\eqref{aini2} is given by
\begin{subequations}
			\begin{alignat}{3}
&  \int_\Omega \partial_t u^n_\kappa \xi \,\mathrm{d}x
+ D\int_\Omega \nabla u^n_\kappa\cdot \nabla \xi\,\mathrm{d}x=
-  \int_\Gamma q^n_\kappa \xi \,\mathrm{d}S,&\label{rw1.a}\\
& \int_\Gamma \partial_t \varphi^n_\kappa \zeta\,\mathrm{d}S
 = - \int_\Gamma \nabla_\Gamma \mu^n_\kappa \cdot\nabla_\Gamma \zeta\,\mathrm{d}S, &\label{rw1.c} \\
			&\ \, \int_\Gamma \mu^n_\kappa\zeta \,\mathrm{d}S
=\int_\Gamma \Big[\nabla_\Gamma \varphi^n_\kappa\cdot\nabla_\Gamma\zeta + W_\kappa'(\varphi^n_\kappa) \zeta -\frac{1}{2}\eta^n_\kappa \zeta\Big]\,\mathrm{d}S,&\label{rw1.d}\\
			&\int_\Gamma \partial_t v^n_\kappa \zeta\,\mathrm{d}S +  \int_\Gamma \nabla_\Gamma \eta^n_\kappa\cdot \nabla_\Gamma \zeta\,\mathrm{d}S  =  \int_\Gamma q^n_\kappa \zeta \,\mathrm{d}S,& \label{rw1.e}
\end{alignat}
\end{subequations}
for any test functions $\xi\in V_{n,\Omega}$ and $\zeta\in V_{n,\Gamma}$, where
$$
q^n_\kappa=q(u^n_\kappa,\varphi^n_\kappa,v^n_\kappa).
$$
The discretized system \eqref{rw1.a}--\eqref{rw1.e} is supplemented with the initial conditions
\begin{subequations}
\begin{alignat}{3}
&u^n_\kappa|_{t=0}=\mathbb{P}_{V_{n,\Omega}}u_{0}(x),&\qquad\qquad \text{in}\ \Omega,\label{rwini1}\\
&\varphi^n_\kappa|_{t=0}=\mathbb{P}_{V_{n,\Gamma}}\varphi_{0}(x), \quad  v^n_\kappa|_{t=0}=\mathbb{P}_{V_{n,\Gamma}}v_{0}(x), &\qquad \qquad \text{on}\ \Gamma.
\label{rwini2}
\end{alignat}
\end{subequations}
Then it follows from the facts $u_0\in L^2(\Omega)$, $\varphi_0\in H^1(\Gamma)$ and $v_0\in L^2(\Gamma)$ that as $n\to +\infty$,
\begin{align}
\|u_\kappa^n(0)-u_0\|_{L^2(\Omega)}\to 0, \quad \|\varphi_\kappa^n(0)-\varphi_0\|_{H^1(\Gamma)}\to 0,\quad
\|v_\kappa^n(0)-v_0\|_{L^2(\Gamma)}\to 0.
\label{appini}
\end{align}

Taking $\xi=\xi_i$, $\zeta=\zeta_i$, $i=1,\cdots,n$ in \eqref{rw1.a}--\eqref{rw1.e}, respectively, we obtain a system of ordinary differential equations for the time-dependent coefficients $\{a_i(t),\,b_i(t),\,c_i(t),\,d_i(t)\}$, $i=1,\cdots,n$, subject to the initial conditions
\begin{align}
a_i(0)=\int_\Omega u_0\xi_i\,\mathrm{d}x,\quad b_i(0)=\int_\Gamma \varphi_0\zeta_i\,\mathrm{d}S,\quad c_i(0)=\int_\Gamma v_0\zeta_i\,\mathrm{d}S,\qquad i=1,\cdots,n.\label{rwini}
\end{align}
Thanks to the assumptions
\textbf{(H1)}, \textbf{(H3)} and the continuity of $q$, we can apply Peano's existence theorem to conclude:
\begin{proposition}
\label{exe:gar}
Let $\kappa\in (0,r_0)$ be a fixed parameter.
For every positive integer $n$, there exists $T_n\in (0,T]$ such that the discretized problem \eqref{rw1.a}--\eqref{rwini2} admits at least one local solution on $[0, T_n]$ denoted by $(u^n_\kappa,\varphi^n_\kappa,v^n_\kappa,\mu^n_\kappa,\eta^n_\kappa)$, which is determined by the functions $\{a_i(t),\,b_i(t),\,c_i(t),\,d_i(t)\}\in C^1([0,T_n])$, $i=1,\cdots,n$.
\end{proposition}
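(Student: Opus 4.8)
The statement to prove is Proposition \ref{exe:gar}, which asserts local existence of a solution to the finite-dimensional Galerkin ODE system \eqref{rw1.a}--\eqref{rwini2}. This is essentially a routine application of Peano's (or Carathéodory's) theorem once one rewrites the discretized weak formulation as a genuine ODE system in normal form for the coefficient vector.

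\medskip

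\noindent\textbf{Proof plan.}
The plan is to convert the discretized weak system into an explicit first-order ODE system for the vector of coefficients and then invoke Peano's existence theorem. First I would substitute the ansatz for $u^n_\kappa,\varphi^n_\kappa,v^n_\kappa,\mu^n_\kappa$ (and the induced expression for $\eta^n_\kappa$) into \eqref{rw1.a}--\eqref{rw1.e}, testing against $\xi_i$ and $\zeta_i$. Using that $\{\xi_i\}$ and $\{\zeta_i\}$ are $L^2$-orthonormal and simultaneously orthogonal eigenbases of $-\Delta$ on $\Omega$ (Neumann) and $-\Delta_\Gamma$ on $\Gamma$, the terms $\int_\Omega \partial_t u^n_\kappa \xi_i\,\mathrm{d}x = a_i'(t)$, $\int_\Omega \nabla u^n_\kappa\cdot\nabla\xi_i\,\mathrm{d}x = \lambda_i^\Omega a_i(t)$, and similarly $\int_\Gamma \partial_t\varphi^n_\kappa\zeta_i\,\mathrm{d}S = b_i'(t)$, $\int_\Gamma\nabla_\Gamma\varphi^n_\kappa\cdot\nabla_\Gamma\zeta_i\,\mathrm{d}S = \lambda_i^\Gamma b_i(t)$, etc. Equation \eqref{rw1.d} then lets me solve explicitly for $d_i(t)$ in terms of $b_i(t)$, $c_i(t)$ and the nonlinear term $\int_\Gamma W_\kappa'(\varphi^n_\kappa)\zeta_i\,\mathrm{d}S$; plugging this into \eqref{rw1.c} gives $b_i'(t)$, and \eqref{rw1.e} gives $c_i'(t)$, while \eqref{rw1.a} gives $a_i'(t)$. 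The upshot is a system $\mathbf{y}' = \mathbf{G}(t,\mathbf{y})$ with $\mathbf{y}=(a_1,\dots,a_n,b_1,\dots,b_n,c_1,\dots,c_n)$ and initial datum \eqref{rwini} (note $d_i$ is determined algebraically, not by an ODE).

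\medskip

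\noindent\textbf{Continuity and conclusion.}
Next I would check that $\mathbf{G}$ is continuous on $[0,T]\times\mathbb{R}^{3n}$. The only nonlinear contributions come from $W_\kappa'(\varphi^n_\kappa)$ and $q^n_\kappa = q(u^n_\kappa,\varphi^n_\kappa,v^n_\kappa)$. Since $\kappa\in(0,r_0)$ is fixed, the regularized nonlinearity $W_\kappa' = F_\kappa' - \theta_0\,\mathrm{id}$ is globally defined and continuous (indeed $C^1$) on all of $\mathbb{R}$ by the construction \eqref{vF}; hence $r\mapsto W_\kappa'(r)$ is continuous, and $(x,t)\mapsto W_\kappa'(\varphi^n_\kappa(x,t))$ depends continuously on the coefficients $\{b_i\}$ through continuity of the finite sum, so the integrals $\int_\Gamma W_\kappa'(\varphi^n_\kappa)\zeta_i\,\mathrm{d}S$ are continuous functions of $\mathbf{y}$ (dominated convergence, using boundedness of $\varphi^n_\kappa$ on the compact $\Gamma$ for bounded coefficients). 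Likewise $q$ is assumed continuous on $\mathbb{R}^3$, so $\int_\Gamma q^n_\kappa\xi_i\,\mathrm{d}S$ and $\int_\Gamma q^n_\kappa\zeta_i\,\mathrm{d}S$ are continuous in $\mathbf{y}$. Therefore $\mathbf{G}$ is continuous, and Peano's existence theorem yields a local $C^1$ solution $\mathbf{y}$ on some interval $[0,T_n]$ with $T_n\in(0,T]$; this produces $\{a_i,b_i,c_i\}\in C^1([0,T_n])$, and then $d_i\in C^1([0,T_n])$ follows since it is a continuous (indeed $C^1$) algebraic function of the solution. Reassembling gives the claimed local solution $(u^n_\kappa,\varphi^n_\kappa,v^n_\kappa,\mu^n_\kappa,\eta^n_\kappa)$.

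\medskip

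\noindent\textbf{Main obstacle.}
There is no serious obstacle here; the only point requiring a little care is the algebraic elimination of $d_i$ from \eqref{rw1.d} and verifying that this does not introduce any ill-posedness (it does not, because \eqref{rw1.d} is already solved for $\mu^n_\kappa$ componentwise thanks to the orthonormality of $\{\zeta_i\}$), together with confirming that the map $\mathbf{y}\mapsto\int_\Gamma W_\kappa'(\varphi^n_\kappa)\zeta_i\,\mathrm{d}S$ is continuous — which hinges precisely on the fact that the regularization $W_\kappa'$ in \eqref{vF} has been extended to a continuous function on all of $\mathbb{R}$, removing the singularity of $F'$ at $\pm1$. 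Uniqueness is not claimed (and would require local Lipschitz continuity of $q$ and of $W_\kappa'$, the latter being available but the former not assumed at this stage), so Peano rather than Pic--Lindelöf is the right tool.
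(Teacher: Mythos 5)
Your proposal is correct and follows essentially the same route as the paper: reduce the Galerkin system to an ODE system for the coefficients (with $d_i$ eliminated algebraically via \eqref{rw1.d}), observe that the right-hand side is continuous because the regularization \eqref{vF} makes $W_\kappa'$ continuous on all of $\mathbb{R}$ and $q$ is assumed continuous, and invoke Peano's theorem. The paper states this in two sentences; your write-up simply supplies the routine details.
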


\subsubsection{Uniform estimates w.r.t $n$}
We shall derive some necessary estimates for the approximate solutions  $(u^n_\kappa,\varphi^n_\kappa,v^n_\kappa,\mu^n_\kappa,\eta^n_\kappa)$ that are uniform with respect to the approximating parameter $n$ (and also $T_n$). At this stage, we fix the parameter $\kappa$.
\medskip

\textbf{First estimate}.
Taking $\xi=1$ in \eqref{rw1.a} and $\zeta=1$ in \eqref{rw1.c}, \eqref{rw1.e}, respectively, integrating on $(0,t)\subset (0,T_n]$, we get
\begin{align}
&\int_\Omega u^n_\kappa(x,t)\,\mathrm{d}x+ \int_\Gamma v^n_\kappa(x,t)\,\mathrm{d}S = \int_\Omega u_0(x)\,\mathrm{d}x+ \int_\Gamma v_0(x)\,\mathrm{d}S,\quad \forall\, t\in[0,T_n],\label{rwmass1}\\
&\int_\Gamma \varphi^n_\kappa(x,t)\,\mathrm{d}S=\int_\Gamma \varphi_0(x)\,\mathrm{d}S,\quad \forall\, t\in[0,T_n].\label{rwmass2}
\end{align}
These identities imply that the approximate solutions keep the mass conservation properties along evolution.\medskip

\textbf{Second estimate}.
By an argument similar to that for \cite[(3.5)]{AK20}, we can obtain the following energy identity:
\begin{align}
& \frac{\mathrm{d}}{\mathrm{d}t} \mathcal{E}_\kappa(u^n_\kappa,\varphi^n_\kappa,v^n_\kappa)
+ D\int_\Omega |\nabla u^n_\kappa|^2\,\mathrm{d}x +\int_\Gamma \big(|\nabla_\Gamma \mu^n_\kappa|^2
+|\nabla_\Gamma \eta^n_\kappa|^2\big)\,\mathrm{d}S \nonumber\\
&\quad  =\int_\Gamma q^n_\kappa (\eta^n_\kappa-u^n_\kappa)\,\mathrm{d}S,\qquad \forall\, t\in(0,T_n),
\label{BEL3}
\end{align}
with
$$
\mathcal{E}_\kappa(u^n_\kappa,\varphi^n_\kappa,v^n_\kappa) =\frac12\int_\Omega |u^n_\kappa|^2\,\mathrm{d}x
+ \frac12\int_\Gamma |\nabla_\Gamma \varphi^n_\kappa|^2\,\mathrm{d}S
+ \int_\Gamma W_\kappa(\varphi_\kappa^n)\,\mathrm{d}S
+\frac{\delta}{8}\int_\Gamma |\eta_\kappa^n|^2\,\mathrm{d}S.
$$

We consider here a general mass exchange term $q$ under the linear growth condition \eqref{ligro}. In this case, the right-hand side of \eqref{BEL3} can be estimated as
\begin{align*}
\left|\int_\Gamma q^n_\kappa (\eta^n_\kappa-u^n_\kappa)\,\mathrm{d}S\right|
&\leq \int_\Gamma |\eta_\kappa^n|^2\,\mathrm{d}S
+\widetilde{C} \Big(1+  \int_\Gamma |u_\kappa^n|^2\,\mathrm{d}S
+ \int_\Gamma |\varphi_\kappa^n|^2\,\mathrm{d}S
+\int_\Gamma |v_\kappa^n|^2\,\mathrm{d}S \Big),
\end{align*}
for some $\widetilde{C}>0$ that may depend on $|\Gamma|$. 
As in \cite{AK20},  we can apply the interpolation inequality \eqref{inter1} and Young's inequality to derive the following estimate
\begin{align}
\int_\Gamma |u_\kappa^n|^2\,\mathrm{d}S
& \leq C\| u_\kappa^n\|_{H^1(\Omega)}\|u_\kappa^n\|_{L^2(\Omega)}\non\\
& \leq \gamma \|\nabla u_\kappa^n\|_{\bm{L}^2(\Omega)}^2+ C(1+\gamma^{-1})\|u_\kappa^n\|_{L^2(\Omega)}^2,
\label{uknL2}
\end{align}
where $\gamma>0$ is an arbitrary small constant. Besides, it holds
\begin{align*}
\int_\Gamma |v_\kappa^n|^2\,\mathrm{d}S
&\leq C\big(1+\|\eta_\kappa^n\|_{L^2(\Gamma)}^2+ \|\varphi_\kappa^n\|_{L^2(\Gamma)}^2\big).
\end{align*}
Taking $\zeta= \varphi_\kappa^n$ in \eqref{rw1.c}, we obtain
\begin{align}
&\frac12\frac{\mathrm{d}}{\mathrm{d}t}\|\varphi_\kappa^n \|_{L^2(\Gamma)}^2 + \|\Delta_\Gamma \varphi_\kappa^n\|_{L^2(\Gamma)}^2\non\\
&\quad = -\int_\Gamma   W_\kappa''(\varphi^n_\kappa) |\nabla_\Gamma \varphi^n_\kappa|^2\,\mathrm{d}S -\frac{1}{2}\int_\Gamma \eta^n_\kappa  \Delta_\Gamma \varphi^n_\kappa\,\mathrm{d}S\non\\
&\quad \leq (\theta_0-\theta) \|\nabla_\Gamma \varphi_\kappa^n\|_{\bm{L}^2(\Gamma)}^2 + \frac12 \|\Delta_\Gamma \varphi_\kappa^n\|_{L^2(\Gamma)} \|\eta_\kappa^n\|_{L^2(\Gamma)}\non\\
&\quad \leq (|\theta_0|+|\theta|) \|\Delta_\Gamma \varphi_\kappa^n\|_{L^2(\Gamma)}\| \varphi_\kappa^n\|_{L^2(\Gamma)}  + \frac12 \|\Delta_\Gamma \varphi_\kappa^n\|_{L^2(\Gamma)} \|\eta_\kappa^n\|_{L^2(\Gamma)}\non\\
&\quad \leq \frac12 \|\Delta_\Gamma \varphi_\kappa^n\|_{L^2(\Gamma)}^2
+C \big(\|\varphi_\kappa^n\|_{L^2(\Gamma)}^2+ \|\eta_\kappa^n\|_{L^2(\Gamma)}^2\big).
\label{esL2v}
\end{align}
From the definition of $W_\kappa$, we also find
\begin{align*}
\int_\Gamma W_\kappa(\varphi_\kappa^n)\,\mathrm{d}S
&\geq -L|\Gamma|-\frac{\theta_0}{2}\|\varphi_\kappa^n\|_{L^2(\Gamma)}^2.
\end{align*}
Hence, taking $0<\gamma\leq D/(2\widetilde{C})$ in \eqref{uknL2}, we can deduce from the above estimates and the differential inequalities \eqref{BEL3}, \eqref{esL2v} that
\begin{align}
& \frac{\mathrm{d}}{\mathrm{d}t} \widetilde{\mathcal{E}}_\kappa(u^n_\kappa,\varphi^n_\kappa,v^n_\kappa)
+ \frac{D}{2}\|\nabla u^n_\kappa\|_{\bm{L}^2(\Omega)}^2
+\|\nabla_\Gamma \mu^n_\kappa\|_{\bm{L}^2(\Gamma)}^2
+\|\nabla_\Gamma \eta^n_\kappa\|_{\bm{L}^2(\Gamma)}^2 \non\\
&\qquad + (|\theta_0|+1) \|\Delta_\Gamma \varphi_\kappa^n\|_{L^2(\Gamma)}^2\nonumber\\
&\quad  \leq C\widetilde{\mathcal{E}}_\kappa(u^n_\kappa,\varphi^n_\kappa,v^n_\kappa),\qquad \forall\, t\in(0,T_n),
\label{BEL3a}
\end{align}
where
$$
\widetilde{\mathcal{E}}_\kappa(u^n_\kappa,\varphi^n_\kappa,v^n_\kappa)
=\mathcal{E}_\kappa(u^n_\kappa,\varphi^n_\kappa,v^n_\kappa) + (|\theta_0|+1)\|\varphi_\kappa^n \|_{L^2(\Gamma)}^2
+ L|\Gamma| \geq 0.
$$
Applying Gronwall's lemma, we can conclude from \eqref{BEL3a} that
\begin{align}
& \widetilde{\mathcal{E}}_\kappa(u^n_\kappa(t), \varphi^n_\kappa(t),v^n_\kappa(t))   \non\\
&\qquad +\int_0^t \Big(D\|\nabla u^n_\kappa(\tau)\|_{\bm{L}^2(\Omega)}^2
+\|\nabla_\Gamma \mu^n_\kappa(\tau)\|_{\bm{L}^2(\Gamma)}^2+\|\nabla_\Gamma \eta^n_\kappa(\tau)\|_{\bm{L}^2(\Gamma)}^2+ \|\Delta_\Gamma \varphi_\kappa^n(\tau)\|_{L^2(\Gamma)}^2\Big)\,\mathrm{d}\tau   \non\\
&\quad \leq C(T)\, \widetilde{\mathcal{E}}_\kappa(u^n_\kappa(0), \varphi^n_\kappa(0),v^n_\kappa(0)),\qquad \forall\, t\in (0,T_n],
\label{esL4}
\end{align}
where $C(T)>0$ may depend on $|\Omega|$, $|\Gamma|$, the structural coefficients of the system and $T$, but not on $n$ and $\kappa$.
From the approximation \eqref{vF}, we see that
$$
F_\kappa(r)\leq C(\kappa)(1+r^2),\qquad \forall\, r\in \mathbb{R},
$$
where $C(\kappa)>0$ may depend on $\kappa$. This upper bound and \eqref{appini} yield
$$
\widetilde{\mathcal{E}}_\kappa(u^n_\kappa(0), \varphi^n_\kappa(0),v^n_\kappa(0))\leq C,
$$
where $C>0$ depends on $\|u_0\|_{L^2(\Omega)}$, $\|\varphi_0\|_{H^1(\Gamma)}$, $\|v_0\|_{L^2(\Gamma)}$, $|\Omega|$, $|\Gamma|$, the structural coefficients and $\kappa$, but not on $n$.
As a consequence, we deduce from \eqref{esL4} that
\begin{align}
&  \|u_\kappa^n(t)\|_{L^2(\Omega)}^2
+ \|\varphi_\kappa^n(t)\|_{H^1(\Gamma)}^2
+ \|v_\kappa^n(t)\|_{L^2(\Gamma)}^2
+ \|\eta_\kappa^n(t)\|_{L^2(\Gamma)}^2 \non\\
&\qquad  +\int_0^t \Big(D\|\nabla u^n_\kappa(\tau)\|_{\bm{L}^2(\Omega)}^2
+\|\nabla_\Gamma \mu^n_\kappa(\tau)\|_{\bm{L}^2(\Gamma)}^2+\|\nabla_\Gamma \eta^n_\kappa(\tau)\|_{\bm{L}^2(\Gamma)}^2+ \|\Delta_\Gamma \varphi_\kappa^n(\tau)\|_{L^2(\Gamma)}^2\Big)\,\mathrm{d}\tau \non\\
&\quad \leq C(T),\qquad \forall\, t\in (0,T_n].
\label{esL5}
\end{align}
Besides, exploiting the elliptic regularity for $\varphi_\kappa^n$, we get
\begin{align}
\int_0^t   \|\varphi_\kappa^n(\tau)\|_{H^2(\Gamma)}^2 \,\mathrm{d}\tau &\leq
\int_0^t  \big( \|\Delta_\Gamma \varphi_\kappa^n(\tau)\|_{L^2(\Gamma)}^2+ \|\varphi_\kappa^n(\tau)\|_{L^2(\Gamma)}^2 \big) \,\mathrm{d}\tau \non\\
& \leq C(T),\qquad \forall\, t\in (0,T_n].
\label{esvaH2}
\end{align}

\textbf{Third estimate}.
Taking $\zeta=1$ in \eqref{rw1.d}, we have
\begin{align}
\int_\Gamma \mu_\kappa^n\,\mathrm{d}S= \int_\Gamma W'_\kappa(\varphi_\kappa^n)\,\mathrm{d}S-\frac12\int_\Gamma \eta_\kappa^n\,\mathrm{d}S.
\label{mu-mean1}
\end{align}
It easily follows from \eqref{esL5} that
$$
\sup_{t\in[0,T_n]}\left|\int_\Gamma \eta_\kappa^n(t)\,\mathrm{d}S\right|\leq |\Gamma|^\frac12\|\eta_\kappa^n(t)\|_{L^2(\Gamma)}\leq C(T).
$$
Next, by \eqref{vF} (which gives the linear growth of $F'_\kappa$) and \eqref{esL5}, we find
\begin{align*}
\sup_{t\in[0,T_n]} \left|\int_\Gamma  F'_\kappa(\vp_\kappa^n(t))\,\mathrm{d}S\right|
 \leq C |\Gamma| + C \sup_{t\in[0,T_n]} \|\vp_\kappa^n(t)\|_{L^1(\Gamma)}   \leq C(T).
\end{align*}
The above estimates together with \eqref{esL5}, \eqref{mu-mean1} and Poincar\'e's inequality yield
\begin{align}
\int_0^t \|\mu^n_\kappa(\tau)\|_{H^1(\Gamma)}^2\,\mathrm{d}\tau
&\leq C\int_0^t \|\nabla_\Gamma \mu^n_\kappa(\tau)\|_{\bm{L}^2(\Gamma)}^2\,\mathrm{d}\tau +C \int_0^t \left|\int_\Gamma \mu_\kappa^n(\tau)\,\mathrm{d}S\right|^2\,\mathrm{d}\tau\non\\
&\leq C(T),\qquad \forall\, t\in (0,T_n].
\label{esmuh1}
\end{align}
We remark that the estimates \eqref{esL5}, \eqref{esvaH2} and \eqref{esmuh1} are independent of $n$ (as well as $T_n$), but they may depend on $\kappa$.\medskip

\textbf{Fourth estimate}.
We now derive estimates for the time derivatives as in \cite{AK20}.
Since for any $\xi\in H^1(\Omega)$,
\begin{align*}
 &\left|\langle \partial_t u_\kappa^n, \xi\rangle_{(H^1(\Omega))',H^1(\Omega)}\right|
 =\left|\int_\Omega \partial_t u^n_\kappa \xi \,\mathrm{d}x\right|\non\\
 &\quad \leq D\|\nabla u^n_\kappa\|_{\bm{L}^2(\Omega)}\|\nabla \xi\|_{\bm{L}^2(\Omega)} + \|q_\kappa^n\|_{L^2(\Gamma)}\|\xi\|_{L^2(\Gamma)}\non\\
 &\quad \leq C\big(D\|\nabla u^n_\kappa\|_{\bm{L}^2(\Omega)}+ \|u_\kappa^n\|_{L^2(\Gamma)} +\|\varphi_\kappa^n\|_{L^2(\Gamma)}+  \|v_\kappa^n\|_{L^2(\Gamma)}+1\big)\|\xi\|_{H^1(\Omega)},
\end{align*}
then we infer from \eqref{uknL2} and \eqref{esL5} that
\begin{align}
\int_0^t \|\partial_t u_\kappa^n(\tau)\|_{(H^1(\Omega))'}^2\,\mathrm{d}\tau\leq  C(T),
\qquad \forall\, t\in (0,T_n].
\label{estu}
\end{align}
In a similar manner, we can obtain
\begin{align}
\int_0^t \|\partial_t \varphi_\kappa^n(\tau)\|_{(H^1(\Gamma))'}^2\,\mathrm{d}\tau
+ \int_0^t \|\partial_t v_\kappa^n(\tau)\|_{(H^1(\Gamma))'}^2\,\mathrm{d}\tau\leq  C(T),
\qquad \forall\, t\in (0,T_n].
\label{estvv}
\end{align}
From the derivation of \eqref{estu} and \eqref{estvv}, we see that the upper bound $C(T)$ is again independent of the index $n$, but it may depend on $\kappa$.\medskip

\subsubsection{Extension of approximate solutions to $[0,T]$}
Using the definition of the approximate solutions as well as the uniform estimate \eqref{esL5}, we find that the solutions $\{a_i(t),\,b_i(t),\,c_i(t),\,d_i(t)\}$, $i=1,\cdots,n$, are bounded on the time interval $[0, T_n]\subset [0,T]$
by a positive constant that neither depends on $T_n$ nor on $n$.
Consequently, thanks to the classical ODE theory, the
solutions $\{a_i(t),\,b_i(t),\,c_i(t),\,d_i(t)\}$, $i=1,\cdots,n$, can be extended to $[0, T]$.
Thus, we are able to construct the approximate solution $(u^n_\kappa,\varphi^n_\kappa,v^n_\kappa,\mu^n_\kappa,\eta^n_\kappa)$ on the whole time interval $[0,T]$, which satisfies the weak formulation \eqref{rw1.a}--\eqref{rw1.e} everywhere in $[0, T]$
with the uniform estimates \eqref{esL5}, \eqref{esvaH2}, \eqref{esmuh1}, \eqref{estu} and \eqref{estvv} with respective to $n$.

\subsubsection{Passage to the limit as $n\to +\infty$}
 In view of the uniform estimates \eqref{esL5}, \eqref{esvaH2}, \eqref{esmuh1}, \eqref{estu} and \eqref{estvv}, we can apply a standard compactness argument involving the Banach-Alaoglu theorem and the Aubin-Lions-Simon lemma to find a convergent subsequence  $\{(u^n_\kappa,\varphi^n_\kappa,v^n_\kappa,\mu^n_\kappa,\eta^n_\kappa)\}$ (not relabelled for simplicity), whose limit under $n\to +\infty$ denoted by $(u_\kappa,\varphi_\kappa,v_\kappa,\mu_\kappa,\eta_\kappa)$, fulfil the following regularity properties:
\begin{align}
& u_\kappa\in L^\infty(0,T;L^2(\Omega))\cap L^2(0,T;H^1(\Omega))\cap H^1(0,T;(H^1(\Omega))'),\notag\\
& \varphi_\kappa \in L^{\infty}(0,T;H^1(\Gamma))\cap L^{2}(0,T;H^2(\Gamma))\cap H^{1}(0,T;(H^1(\Gamma))'),\notag \\
&\mu_\kappa \in L^{2}(0,T;H^1(\Gamma)),\notag \\
& v_\kappa,\,\eta_\kappa  \in L^{\infty}(0,T;L^2(\Gamma))\cap L^{2}(0,T;H^1(\Gamma)) \cap H^1(0,T;(H^1(\Gamma))').\notag
\end{align}
Moreover, by the construction of $F'_\kappa$, we have $F'_\kappa(\vp_\kappa)\in L^\infty(0,T;H^1(\Gamma))$. The functions  $(u_\kappa,\varphi_\kappa,v_\kappa,\mu_\kappa,\eta_\kappa)$ satisfy the weak formulation of the regularized system \eqref{a1.a}--\eqref{a1.f} such that
\begin{subequations}
			\begin{alignat}{3}
& \int_0^T  \langle \partial_t u_\kappa,\xi \rangle_{(H^1(\Omega))',\,H^1(\Omega)}\,\mathrm{d}t
+ D\int_0^T \!\int_\Omega \nabla u_\kappa\cdot \nabla \xi\,\mathrm{d}x\mathrm{d}t=
- \int_0^T\! \int_\Gamma q_\kappa \xi \,\mathrm{d}S \mathrm{d}t,&\label{rraw1.a}\\
& \int_0^T \left \langle \partial_t \varphi_\kappa,\zeta\right \rangle_{(H^1(\Gamma))',\,H^1(\Gamma)}\,\mathrm{d}t
 = - \int_0^T \!\int_\Gamma \nabla_\Gamma \mu_\kappa \cdot\nabla_\Gamma \zeta\,\mathrm{d}S \mathrm{d}t, &\label{rraw1.c} \\
			&\ \, \mu_\kappa=- \Delta_\Gamma \varphi_\kappa + W_\kappa'(\varphi_\kappa)-\frac{1}{2}\eta_\kappa,\qquad \text{a.e. on}\ \Gamma\times (0,T),&\label{rraw1.d}\\
			&\int_0^T\left \langle\partial_t v_\kappa,\zeta\right \rangle_{(H^1(\Gamma))',\,H^1(\Gamma)}\,\mathrm{d}t +  \int_0^T \!\int_\Gamma \nabla_\Gamma \eta_\kappa\cdot \nabla_\Gamma \zeta\,\mathrm{d}S \mathrm{d}t = \int_0^T\! \int_\Gamma q_\kappa \zeta \,\mathrm{d}S \mathrm{d}t,& \label{rraw1.e}\\
            &\ \, \eta_\kappa = \frac{2}{\delta}\left(2v_\kappa-1-\varphi_\kappa\right),\qquad \text{a.e. on}\ \Gamma\times (0,T),& \label{rraw1.f}
\end{alignat}
\end{subequations}
for any test functions $\xi\in L^2(0,T;H^1(\Omega))$ and $\zeta\in L^2(0,T;H^1(\Gamma))$, where
$$q_\kappa=q(u_\kappa,\varphi_\kappa,v_\kappa).$$
Besides, the initial condition are attained
\begin{subequations}
\begin{alignat}{3}
&u_\kappa|_{t=0}=u_{0}(x),&\qquad\qquad \text{in}\ \Omega,\label{rrawini1}\\
&\varphi_\kappa|_{t=0}=\varphi_{0}(x), \quad  v_\kappa|_{t=0}=v_{0}(x), &\qquad \qquad \text{on}\ \Gamma.
\label{rrawini2}
\end{alignat}
\end{subequations}
Since the above procedure is parallel to the proof of \cite[Theorem 2.3]{AK20}, we omit the details here.

\subsubsection{Passage to the limit as $\kappa\to 0$}
Our next aim is to derive uniform estimates for the approximate solutions $(u_\kappa,\varphi_\kappa,v_\kappa,\mu_\kappa,\eta_\kappa)$ that are independent of the parameter $\kappa$.

Applying the argument similar to that in \cite{CFG}, we get
$$
\langle \partial_t \vp_\kappa, F'_\kappa(\vp_\kappa)\rangle_{(H^1(\Omega))',\,H^1(\Omega)} = \frac{\mathrm{d}}{\mathrm{d}t} \int_\Gamma F_\kappa(\vp_\kappa)\,\mathrm{d}S,
$$
for almost all $t\in (0,T)$. Thus, taking $\xi= u_\kappa$ in \eqref{rraw1.a}, $\zeta=\mu_\kappa$ in \eqref{rraw1.c} and $\zeta=\eta_\kappa$ in \eqref{rraw1.e}, adding the resultants together and integrating over $(0,T)$ (using also the chain rule \cite[Lemma 7.3]{RoT05}), we can obtain the following energy equality (cf. \eqref{BEL3}):
\begin{align}
& \mathcal{E}_\kappa(u_\kappa(t),\varphi_\kappa(t),v_\kappa(t))
+ \int_0^t\left[D\int_\Omega |\nabla u_\kappa(\tau)|^2\,\mathrm{d}x +\int_\Gamma \big(|\nabla_\Gamma \mu_\kappa(\tau)|^2+|\nabla_\Gamma \eta_\kappa(\tau)|^2\big)\,\mathrm{d}S\right]\mathrm{d}\tau \nonumber\\
&\quad  = \mathcal{E}_\kappa(u_\kappa(0),\varphi_\kappa(0),v_\kappa(0))+ \int_0^t\!\int_\Gamma q_\kappa(\tau) (\eta_\kappa(\tau)-u_\kappa(\tau))\,\mathrm{d}S\mathrm{d}\tau,\qquad \forall\,t\in (0,T].
\label{BEL4}
\end{align}
The second term on the right-hand side of \eqref{BEL4} can be estimated with the help of the linear growth assumption on $q$ like before.
Thanks to the regularity of $\varphi_\kappa$, it follows that $\nabla_\Gamma W_\kappa'(\varphi_\kappa)=W_\kappa''(\varphi_\kappa)\nabla_\Gamma\varphi_\kappa$ for almost all $(x,t)\in \Gamma\times (0,T)$. Then taking $\zeta=\varphi_\kappa$ in \eqref{rraw1.c}, similar to \eqref{esL2v}, we can derive
\begin{align}
&\|\varphi_\kappa(t) \|_{L^2(\Gamma)}^2 + \int_0^t\|\Delta_\Gamma \varphi_\kappa(\tau)\|_{L^2(\Gamma)}^2\,\mathrm{d}\tau \non\\
& \quad \leq \|\varphi_\kappa(0) \|_{L^2(\Gamma)}^2
+C \int_0^t \big(\|\varphi_\kappa(\tau)\|_{L^2(\Gamma)}^2+ \|\eta_\kappa(\tau)\|_{L^2(\Gamma)}^2\big)\mathrm{d}\tau,\qquad \forall\,t\in (0,T].
\label{esL2vb}
\end{align}
Then it follows from \eqref{BEL4} and \eqref{esL2vb} that
\begin{align}
&  \widetilde{\mathcal{E}}_\kappa(u_\kappa(t),\varphi_\kappa(t),v_\kappa(t))
+ \int_0^t\!\left[ \frac{D}{2}\|\nabla u_\kappa(\tau)\|_{\bm{L}^2(\Omega)}^2
+\|\nabla_\Gamma \mu_\kappa(\tau)\|_{\bm{L}^2(\Gamma)}^2+\|\nabla_\Gamma \eta_\kappa(\tau)\|_{\bm{L}^2(\Gamma)}^2\right]\,\mathrm{d}\tau \non\\
&\qquad + (|\theta_0|+1) \int_0^t\!\|\Delta_\Gamma \varphi_\kappa(\tau)\|_{L^2(\Gamma)}^2\,\mathrm{d}\tau \nonumber\\
&\quad  \leq \widetilde{\mathcal{E}}_\kappa(u_\kappa(0),\varphi_\kappa(0),v_\kappa(0))
+ C\int_0^t \widetilde{\mathcal{E}}_\kappa(u_\kappa(\tau), \varphi_\kappa(\tau),v_\kappa(\tau))\,\mathrm{d}\tau,\qquad \forall\, t\in(0,T],
\label{BEL4a}
\end{align}
where
$$
\widetilde{\mathcal{E}}_\kappa(u_\kappa,\varphi_\kappa,v_\kappa)
=\mathcal{E}_\kappa(u_\kappa,\varphi_\kappa,v_\kappa) + (|\theta_0|+1)\|\varphi_\kappa \|_{L^2(\Gamma)}^2
+ L|\Gamma| \geq 0.
$$

On account of the assumption \textbf{(H1)}, we find
\begin{align}
F_\kappa(r)\leq F(r),\qquad \forall\, r\in [-1,1].
\label{fkk}
\end{align}
Besides, the assumption $\|\varphi_0\|_{L^\infty(\Gamma)}\leq 1$ implies that
$$
\int_\Gamma F_\kappa(\varphi_0)\,\mathrm{d}S\leq \int_\Gamma F(\varphi_0)\,\mathrm{d}S.
$$
As a consequence, we obtain the uniform upper bound
\begin{align}
\mathcal{E}_\kappa(u_\kappa(0),\varphi_\kappa(0),v_\kappa(0))
=\mathcal{E}_\kappa(u_0,\varphi_0,v_0)\leq \mathcal{E}(u_0,\varphi_0,v_0)\leq C,
\label{iniE0}
\end{align}
where $C>0$ depends on $\|u_0\|_{L^2(\Omega)}$, $\|\varphi_0\|_{H^1(\Gamma)}$, $\|v_0\|_{L^2(\Gamma)}$, $\int_\Gamma F(\varphi_0)\,\mathrm{d}S$, $\Omega$, $\Gamma$, the structural coefficients, but not on the parameter $\kappa$.
Applying Gronwall's lemma to \eqref{BEL4a}, we obtain the following $\kappa$-independent estimate:
\begin{align}
&  \|u_\kappa(t)\|_{L^2(\Omega)}^2
+ \|\varphi_\kappa(t)\|_{H^1(\Gamma)}^2
+ \|v_\kappa(t)\|_{L^2(\Gamma)}^2
+ \|\eta_\kappa(t)\|_{L^2(\Gamma)}^2 \non\\
&\qquad  +\int_0^t \Big(D\|\nabla u_\kappa(\tau)\|_{\bm{L}^2(\Omega)}^2
+\|\nabla_\Gamma \mu_\kappa(\tau)\|_{\bm{L}^2(\Gamma)}^2+\|\nabla_\Gamma \eta_\kappa(\tau)\|_{\bm{L}^2(\Gamma)}^2+ \|\Delta_\Gamma \varphi_\kappa(\tau)\|_{L^2(\Gamma)}^2\Big)\,\mathrm{d}\tau \non\\
&\quad \leq C(T),\qquad \forall\, t\in (0,T].
\label{esL5a}
\end{align}

Next, we recall a well-known result for the approximating potential $F_{\kappa}$ (see e.g., \cite{KNP} or \cite[Section 3]{FG12}), that is, there exists a positive constant $C$ that depends on $\langle \vp_\kappa(0)\rangle_\Gamma=\langle \vp_0\rangle_\Gamma\in (-1,1)$, $F$ and $|\Gamma|$, but is independent of $0<\kappa\ll 1$, such that
\begin{equation}
\label{eq:uniformhatS}
\| F'_{\kappa} (\varphi_\kappa)\|_{L^1(\Gamma)} \leq C \int_\Gamma (\varphi_\kappa - \langle\varphi_\kappa\rangle_\Gamma)\big(F'_{\kappa}(\varphi_\kappa) - \langle F'_{\kappa}(\varphi_\kappa)\rangle_\Gamma\big)\,\mathrm{d}x + C.
\end{equation}
Recalling the mass conservation $\langle\varphi_\kappa(t)\rangle_\Gamma=\langle \vp_0\rangle_\Gamma$, we test \eqref{rraw1.d} by $\varphi_\kappa - \langle\varphi_\kappa\rangle_\Gamma=\varphi_\kappa-\langle \vp_0\rangle_\Gamma $ and obtain 
\begin{align*}
  & \| \nabla_\Gamma \varphi_\kappa \|_{\bm{L}^2(\Gamma)}^2 + \int_\Gamma \big(F'_{\kappa}(\varphi_\kappa) - \langle F'_{\kappa}(\varphi_\kappa)\rangle_\Gamma\big) (\varphi_\kappa - \langle\varphi_0\rangle_\Gamma)\,\mathrm{d}S\\
  &\quad = \int_\Gamma(\mu_\kappa - \langle\mu_\kappa \rangle_\Gamma)(\varphi_\kappa - \langle\varphi_0\rangle_\Gamma)\,\mathrm{d}S
   + \theta_0\int_\Gamma \varphi_\kappa(\varphi_\kappa - \langle\varphi_0\rangle_\Gamma)\,\mathrm{d}S
   +\frac{1}{2}\int_\Gamma \eta_\kappa (\varphi_\kappa - \langle\varphi_0\rangle_\Gamma)\,\mathrm{d}S.
\end{align*}
Then it follows from \eqref{eq:uniformhatS}, Poincar\'{e}'s inequality and the Cauchy-Schwarz inequality that
\begin{align}
		 \| F'_{\kappa} (\varphi_\kappa)\|_{L^1(\Gamma)}
& \leq C  \| \nabla_\Gamma \mu_\kappa\|_{\bm{L}^2(\Gamma)} \|\nabla_\Gamma \varphi_\kappa \|_{\bm{L}^2(\Gamma)} + C \|\varphi_\kappa \|_{L^2(\Gamma)}^2+ C \|\eta_\kappa \|_{L^2(\Gamma)}^2\non\\
&\quad +C \|\varphi_\kappa \|_{L^1(\Gamma)}+ C \|\eta_\kappa \|_{L^1(\Gamma)},
\label{SSSe}
\end{align}
which implies
\begin{align}
\left|\int_\Gamma \mu_\kappa(t)\,\mathrm{d}S\right| 
&\leq   \left|\int_\Gamma W_\kappa'(\varphi_\kappa(t))\,\mathrm{d}S\right| 
+ \frac{1}{2}\left| \int_\Gamma\eta_\kappa(t)\,\mathrm{d}S\right| 
\non\\
&\leq C \| \nabla_\Gamma \mu_\kappa(t)\|_{\bm{L}^2(\Gamma)} +C,\qquad \text{for almost all}\ t\in (0,T).
\label{SSSe1}
\end{align}
From \eqref{esL5a}, \eqref{SSSe1}  and Poincar\'e's inequality, we find
\begin{align}
\int_0^t \|\mu_\kappa(\tau)\|_{H^1(\Gamma)}^2\,\mathrm{d}\tau
& \leq C\int_0^t \|\nabla_\Gamma \mu_\kappa(\tau)\|_{\bm{L}^2(\Gamma)}^2\,\mathrm{d}\tau +C \int_0^t \left|\int_\Gamma \mu_\kappa(\tau)\,\mathrm{d}S\right|^2\,\mathrm{d}\tau\non\\
&\leq  C(T),\qquad \forall\, t\in (0,T].
\label{esmuh1a}
\end{align}
Similar to \eqref{estu}--\eqref{estvv}, we also get
\begin{align}
& \int_0^t \|\partial_t u_\kappa(\tau)\|_{(H^1(\Omega))'}^2\,\mathrm{d}\tau \leq (1+D)C(T),\qquad \forall\, t\in (0,T],
\label{estvva}\\
&  \int_0^t \|\partial_t \varphi_\kappa(\tau)\|_{(H^1(\Gamma))'}^2\,\mathrm{d}\tau + \int_0^t \|\partial_t v_\kappa(\tau)\|_{(H^1(\Gamma))'}^2\,\mathrm{d}\tau\leq  C(T),\qquad \forall\, t\in (0,T].
\non
\end{align}
Besides, testing \eqref{rraw1.d} by  $-\Delta_\Gamma \varphi_\kappa$, we obtain
\begin{align}
\|\Delta_\Gamma \varphi_\kappa\|_{L^2(\Gamma)}^2
& = -\int_\Gamma W_\kappa''(\varphi_\kappa)|\nabla_\Gamma \varphi_\kappa|^2\,\mathrm{d}S + \int_\Gamma \nabla_\Gamma \mu_\kappa\cdot\nabla_\Gamma \varphi_\kappa\,\mathrm{d}S
\non\\
&\quad + \frac12\int_\Gamma \nabla_\Gamma \eta_\kappa\cdot\nabla_\Gamma \varphi_\kappa\,\mathrm{d}S\non\\
&\leq C\|\nabla_\Gamma \varphi_\kappa\|_{\bm{L}^2(\Gamma)}^2 + C
\|\nabla_\Gamma \varphi_\kappa\|_{\bm{L}^2(\Gamma)}\big(\|\nabla_\Gamma \mu_\kappa\|_{\bm{L}^2(\Gamma)}+\|\nabla_\Gamma \eta_\kappa\|_{\bm{L}^2(\Gamma)}\big),\label{vp-H2}
\end{align}
which combined with \eqref{esL5a} and the surface elliptic theory yields
\begin{align}
\int_0^t \|\varphi_\kappa(\tau)\|_{H^2(\Gamma)}^4\,\mathrm{d}\tau\leq C(T),\qquad \forall\, t\in (0,T].
\label{vpL4H2}
\end{align}

We note that all the estimates obtained above are independent of $\kappa$. Then by a standard compactness argument like in \cite{AK20}, we can find a convergent subsequence $\{(u_\kappa,\varphi_\kappa,v_\kappa,\mu_\kappa,\eta_\kappa)\}$ (not relabelled for simplicity), whose limit under $\kappa\to 0^+$ denoted by $(u,\varphi,v,\mu,\eta)$, fulfil the following regularity properties:
\begin{align}
& u \in L^\infty(0,T;L^2(\Omega))\cap L^2(0,T;H^1(\Omega))\cap H^1(0,T;(H^1(\Omega))'),\notag\\
& \varphi  \in L^{\infty}(0,T;H^1(\Gamma))\cap L^{4}(0,T;H^2(\Gamma))\cap H^{1}(0,T;(H^1(\Gamma))'),\notag \\
&\mu  \in L^{2}(0,T;H^1(\Gamma)),\notag \\
& v,\,\eta  \in L^{\infty}(0,T;L^2(\Gamma))\cap L^{2}(0,T;H^1(\Gamma)) \cap H^1(0,T;(H^1(\Gamma))').\notag
\end{align}
In view of \cite{AK20} (for the case of a regular potential $W$), in order to show that $(u,\varphi,v,\mu,\eta)$ is indeed a weak solution to the original system
\eqref{1.a}--\eqref{1.f} with a singular potential (or its weak formulation \eqref{w1.a}--\eqref{w1.f}),
the remaining task is to verify the limit of $F'_\kappa(\varphi_\kappa)$ in a suitable sense. This can be done by adapting the argument in \cite{DD95} (see also \cite{FG12,GGW18,Mi19} for further applications). To this end, we  infer from \eqref{esL5a} and \eqref{SSSe} that
\begin{align}
		\int_0^T \| F'_{\kappa} (\varphi_\kappa(t))\|_{L^1(\Gamma)}^2\,\mathrm{d}t\leq C,
\label{FF1}
\end{align}
where $C>0$ is independent of $\kappa$. By the definition of $F_{\kappa}$ and the assumption \textbf{(H1)}, there exists a
small constant $\kappa_0\in (0,r_0)$ such that for all
$\kappa \in(0,\kappa_0]$, $F'_\kappa(r)\geq 1$ for
$r\in[1-\kappa_0,+\infty)$ and $F'_\kappa(r)\leq -1$ for
$r\in(-\infty,-1+\kappa_0]$ and $F'_\kappa(r)$ is monotone
increasing for all $r\in\mathbb{R}$.
Let us introduce the following sets
\begin{align*}
&E^{\kappa}_{\kappa_0}=\left\lbrace (x,t)\in \Gamma \times (0,T):
\ |\vp_\kappa(x,t)|>1-\kappa_0 \right\rbrace, \quad \kappa\in(0,\kappa_0], \\
&E_{\kappa_0}=\left\lbrace (x,t)\in \Gamma \times (0,T):
\ |\vp(x,t)|>1-\kappa_0 \right\rbrace.
\end{align*}
Since $\varphi_\kappa\in L^{2}(0,T;H^2(\Gamma))\cap H^{1}(0,T;(H^1(\Gamma))')$ are uniformly bounded with respective to $\kappa$, the Aubin-Lions lemma (see e.g., \cite{simon}) yields
$\varphi_\kappa \to \varphi$ strongly in $L^2(0,T;H^1(\Gamma))$ (up to a subsequence). Thus, there exists a further subsequence such that $\vp_\kappa$ converges to $\varphi$ almost everywhere on $\Gamma\times(0,T)$.
From the pointwise convergence and Fatou's Lemma, we infer that for any fixed $\kappa_0\in (0,r_0)$, it holds
$$
|E_{\kappa_0}|
\leq \liminf_{\kappa \rightarrow 0^+}
| E^\kappa_{\kappa_0}|.
$$
On the other hand, for $\kappa\in (0,\kappa_0]$,
 we deduce from \eqref{FF1} that
$$
  \min\{ F'(1-\kappa_0), -F'(-1+\kappa_0)\}\,
  |E^\kappa_{\kappa_0}|
  \leq \|F'_\kappa(\vp_\kappa)
  \|_{L^1{(\Gamma \times (0,T)})}\leq C,
$$
where the constant $C>0$ does not depend on $\kappa_0$ and $\kappa$.
The above two estimates imply
$$
|E_{\kappa_0}|\leq \frac{C}
{\min\{ F'(1-\kappa_0), -F'(-1+\kappa_0)\} }.
$$
Passing to the limit as $\kappa_0\rightarrow 0^+$, we find
$$
\mathrm{meas}(\left\lbrace (x,t)\in \Gamma \times (0,T): |\vp(x,t)|\geq 1 \right\rbrace )=0,
$$
which implies
\begin{equation}
-1<\varphi(x,t)<-1,\qquad \text{for almost all}\ (x,t)\in \Gamma \times (0,T).
\label{Linf-1}
\end{equation}
The property \eqref{Linf-1} together with the
pointwise convergence of $\vp_\kappa$ and
the uniform convergence of $F'_\kappa(r)$ to $F'(r)$ on every
compact subset of $(-1,1)$ entails that
$$
F'_\kappa(\vp_\kappa)
\rightarrow F'(\vp), \qquad \text{for almost all } (x,t) \in \Gamma \times (0,T),
$$
as $\kappa\to 0^+$ (up to a subsequence). Besides, from the
estimates \eqref{esL5a}, \eqref{esmuh1a} and \eqref{vpL4H2}, we have
$$
\int_0^T \|F'_\kappa(\vp_\kappa)\|_{L^2(\Gamma)}^2\,\mathrm{d}t
\leq \int_0^T   \|\varphi_\kappa(t)\|_{H^2(\Gamma)}^2 \,\mathrm{d}t +
\int_0^T \big(\|\mu_\kappa(t)\|_{L^2(\Gamma)}^2+\|\eta_\kappa(t)\|_{L^2(\Gamma)}^2\big) \,\mathrm{d}t
\leq C.
$$
Then up to a subsequence, we find that as $\kappa\to 0^+$,
\be
\label{convF}
F'_\kappa(\vp_\kappa)
\rightarrow F'(\vp),\qquad  \text{ weakly in } L^2(0,T;L^2(\Gamma)).
\ee
Hence, the quintuplet $(u,\varphi,v,\mu,\eta)$ satisfies the weak formulation \eqref{w1.a}--\eqref{w1.f}. Furthermore, it is easy to verify that the initial conditions \eqref{ini1}--\eqref{ini2} are satisfied.

\subsubsection{Mass conservation and energy identity}\label{mei}
It is straightforward to check the mass conservation properties
\begin{align}
&\int_\Omega u(x,t)\,\mathrm{d}x+ \int_\Gamma v(x,t)\,\mathrm{d}S= \int_\Omega u_0(x)\,\mathrm{d}x+ \int_\Gamma v_0(x)\,\mathrm{d}S,\qquad \forall\, t\in[0,T],
\label{omass1}\\
&\int_\Gamma \varphi(x,t)\,\mathrm{d}S=\int_\Gamma \varphi_0(x)\,\mathrm{d}S,\qquad \forall\, t\in[0,T].
\label{omass2}
\end{align}
Next, following the arguments in \cite{FG12,GGW18}, which were based on the abstract results \cite[Lemma 7.3]{RoT05} and \cite[Proposition 4.2]{CKRS}, we can show that the total free energy $\mathcal{E}(u(t),\varphi(t),v(t))$ is absolutely continuous on $[0,T]$ and satisfies the energy identity
\begin{align}
& \mathcal{E}(u(t),\varphi(t),v(t))
+ \int_0^t\left[D\int_\Omega |\nabla u|^2\,\mathrm{d}x +\int_\Gamma \big(|\nabla_\Gamma \mu|^2+|\nabla_\Gamma \eta|^2\big)\,\mathrm{d}S\right]\mathrm{d}\tau \nonumber\\
&\quad  = \mathcal{E}(u_0,\varphi_0,v_0)+ \int_0^t\!\int_\Gamma q (\eta-u)\,\mathrm{d}S\mathrm{d}\tau,\qquad \forall\,t\in (0,T].
\label{BEL6}
\end{align}
Thanks to the linear growth condition on $q$, we find $q (\eta-u)\in L^1(0,T;L^1(\Gamma))$. Moreover, from \eqref{BEL6}, some further regularity properties for the weak solution can be established. By interpolation, we find that
$$
u\in C([0,T];L^2(\Omega))\quad \text{and}\quad v\in C([0,T];L^2(\Gamma)).
$$
 Concerning $\vp$, it easily follows $\varphi\in C_w([0,T];H^1(\Gamma))$  and $\varphi\in C([0,T];L^2(\Gamma))$.
From the continuity of the energy $\mathcal{F}(\varphi(t),v(t))$ and the convexity of $F$, we also find that $\|\nabla_\Gamma\vp(t)\|_{\bm{L}^2(\Gamma)}$ is continuous with respect to time on $[0,T]$. As a consequence, we obtain
$$
\varphi\in C([0,T];H^1(\Gamma)).
$$
On the other hand, since $\int_\Gamma F(\varphi(t))\,\mathrm{d}S$ is bounded for all $t\in [0,T]$, then from the assumption \textbf{(H1)} we can conclude the $L^\infty$-estimate \eqref{ues1}.

Finally, let us consider the following elliptic equation on $\Gamma\times (0,T)$:
\begin{align}
-\Delta_\Gamma \varphi+F'(\varphi)=\mu+\theta_0\varphi+\frac12\eta.
\label{ellp1}
\end{align}
We can apply Lemma \ref{lem:sep} and \eqref{esL5a}, \eqref{esmuh1a} to conclude that for any $2\leq p<+\infty$, it holds
\begin{align}
&\int_0^T \big(\|\varphi(t)\|_{W^{2,p}(\Gamma)}^2+ \|F'(\varphi(t))\|_{L^p(\Gamma)}^2\big)\,\mathrm{d}t\non\\
&\quad \leq CT + C\int_0^T \big(\|\mu(t)\|^2_{H^1(\Gamma)} +\|\varphi(t)\|^2_{H^1(\Gamma)}+\|\eta(t)\|^2_{H^1(\Gamma)}\big)\,\mathrm{d}t\non\\
&\quad\leq  C(T). \label{VL2Wq}
\end{align}
We have thus finished the proof for Theorem \ref{thm:weak}-(1).\
\hfill $\square$

\subsubsection{Uniqueness}\label{sec:uniq}
We proceed to establish the uniqueness of weak solutions provided that the mass exchange term $q$ is globally Lipschitz continuous.

Let $(u_i,\varphi_i,v_i,\mu_i,\eta_i)$, $i=1,2$, be two global weak solutions to problem \eqref{1.a}--\eqref{ini2} on $[0, T]$ with initial data $(u_{0i},\varphi_{0i},v_{0i})$, $i=1,2$, respectively. For simplicity, we assume that $\langle \vp_{01}\rangle_\Gamma=\langle \vp_{02}\rangle_\Gamma$. Consider the difference of two solutions:
$$
u=u_1-u_2,\quad \varphi=\varphi_1-\varphi_2,\quad v=v_1-v_2,
$$
and set $q_1=q(u_1,\varphi_1,v_1)$, $q_2=q(u_2,\varphi_2,v_2)$.
Since $q$ is assumed to be globally continuous, then we have the following estimate
\begin{align}
\|q_1-q_2\|_{L^2(\Gamma)} &\leq C\big(\|u\|_{L^2(\Gamma)}+ \|\vp\|_{L^2(\Gamma)}+ \|v\|_{L^2(\Gamma)}\big),\non
\end{align}
where $C>0$ is independent of $u$, $\vp$ and $v$. Thanks to \eqref{omass2}, we find $\langle\varphi(t)\rangle_\Gamma=0$ on $[0,T]$.
Then taking $u$, $(-\Delta_\Gamma)^{-1} \varphi$ and $\delta v$ as test functions for the equations of the difference $u$, $\vp$ and $v$, respectively, we arrive at
\begin{align}
&\frac12\frac{\mathrm{d}}{\mathrm{d}t}\left(\|u\|_{L^2(\Omega)}^2 + \|\varphi\|_{(H^1(\Gamma))'}^2+ \delta\|v\|_{L^2(\Gamma)}^2\right)
+ D\|\nabla u\|_{\bm{L}^2(\Omega)}^2 + \|\nabla_\Gamma \vp\|_{\bm{L}^2(\Gamma)}^2 + 4 \|\nabla_\Gamma v\|_{\bm{L}^2(\Gamma)}^2
\non\\
&\quad = -\int_\Gamma (q_1-q_2)u\,\mathrm{d}S - \int_\Gamma \left(\int_0^1W''(s\vp_1+(1-s)\vp_2)\vp\,\mathrm{d}s\right)\vp\,\mathrm{d}S \non\\
&\qquad+\frac{1}{\delta}\int_\Gamma (2v-\vp)\vp\,\mathrm{d}S +2\int_\Gamma \nabla_\Gamma\varphi\cdot\nabla_\Gamma v\,\mathrm{d}S + \delta \int_\Gamma (q_1-q_2)v\,\mathrm{d}S\non\\
&\quad \leq (1+\delta)\|q_1-q_2\|_{L^2(\Gamma)}(\|u\|_{L^2(\Gamma)}+ \|v\|_{L^2(\Gamma)}\big)+ (\theta_0-\theta)\|\vp\|_{L^2(\Gamma)}^2\non\\
&\qquad  +\frac{2}{\delta}\|v\|_{L^2(\Gamma)}\|\vp\|_{L^2(\Gamma)}
+ 2\|\nabla_\Gamma\varphi\|_{\bm{L}^2(\Gamma)}\|\nabla_\Gamma v\|_{\bm{L}^2(\Gamma)}\non\\
&\quad \leq C\big(\|u\|_{L^2(\Gamma)}^2
+\|\vp\|_{L^2(\Gamma)}^2
+ \|v\|_{L^2(\Gamma)}^2\big)
+ \frac12 \|\nabla_\Gamma\varphi\|_{\bm{L}^2(\Gamma)}^2
+2\|\nabla_\Gamma v\|_{\bm{L}^2(\Gamma)}^2,
\label{uniq1}
\end{align}
where in the above inequality, we have used H\"{o}lder's inequality, the Cauchy-Schwarz inequality and the assumptions \textbf{(H1)}, \textbf{(H3)}. Thanks to the interpolation inequality \eqref{inter1} and Young's inequality, we obtain
$$
\|u\|_{L^2(\Gamma)}^2\leq \gamma \|\nabla u\|_{\bm{L}^2(\Omega)}^2+ C(\gamma)\|u\|_{L^2(\Omega)}^2,
$$
for any $\gamma>0$. Besides, by interpolation and Poincar\'e's inequality, we have
\begin{align*}
\|\vp\|_{L^2(\Gamma)}^2
&\leq \|\vp\|_{H^1(\Gamma)}\|\vp\|_{(H^1(\Gamma))'}\non\\
&\leq \gamma \|\nabla_\Gamma\varphi\|_{\bm{L}^2(\Gamma)}^2
+ C(\gamma) \|\vp\|_{(H^1(\Gamma))'}^2.
\end{align*}
Choosing $\gamma>0$ sufficiently small, we can deduce from \eqref{uniq1} that
\begin{align}
&\frac12\frac{\mathrm{d}}{\mathrm{d}t}\left(\|u\|_{L^2(\Omega)}^2 + \|\varphi\|_{(H^1(\Gamma))'}^2+ \delta\|v\|_{L^2(\Gamma)}^2\right)
+ \frac{D}{2}\|\nabla u\|_{\bm{L}^2(\Omega)}^2 + \frac14\|\nabla_\Gamma \vp\|_{\bm{L}^2(\Gamma)}^2 + 2 \|\nabla_\Gamma v\|_{\bm{L}^2(\Gamma)}^2
\non\\
&\quad \leq C\big(\|u\|_{L^2(\Omega)}^2
+\|\vp\|_{(H^1(\Gamma))'}^2
+ \delta\|v\|_{L^2(\Gamma)}^2\big).
\label{uniq2}
\end{align}
Then by Gronwall's lemma, we can obtain the following continuous dependence estimate:
\begin{align}
&\|u(t)\|_{L^2(\Omega)}^2 + \|\varphi(t)\|_{(H^1(\Gamma))'}^2+ \delta\|v(t)\|_{L^2(\Gamma)}^2 \non\\
&\qquad + \int_0^t\big( \|\nabla u(\tau)\|_{\bm{L}^2(\Omega)}^2 +  \|\nabla_\Gamma \vp(\tau)\|_{\bm{L}^2(\Gamma)}^2 +  \|\nabla_\Gamma v(\tau)\|_{\bm{L}^2(\Gamma)}^2\big)\,\mathrm{d}\tau\non\\
& \quad \leq C(T)\left(\|u(0)\|_{L^2(\Omega)}^2 + \|\varphi(0)\|_{(H^1(\Gamma))'}^2+ \delta\|v(0)\|_{L^2(\Gamma)}^2\right),\qquad \forall\,t\in[0,T],
\label{conti1}
\end{align}
which easily yields the uniqueness result.

This completes the proof for Theorem \ref{thm:weak}-(2).

\subsubsection{Refined results in the equilibrium case}
When $q$ takes the specific form \eqref{q1}, it is linear and thus globally Lipschitz continuous. Hence, the solution is unique. Next, from the energy identity \eqref{BEL6} we immediately arrive at the conclusion \eqref{BELweak}, which holds for all $t\geq 0$. Thanks to the assumption \textbf{(H1)}, the dissipative energy law \eqref{BELweak} easily yields the uniform-in-time estimate \eqref{ues2} for the global weak solution $(u,\varphi,\mu,v,\eta)$ to problem \eqref{1.a}--\eqref{ini2}.

Thus, we finish the proof for Theorem \ref{thm:weak}-(3).

\subsection{Regularity}
In what follows, we prove Theorem \ref{thm:reg} on the regularity of global weak solutions.
The proof is based on the following auxiliary result about the existence of solutions in a more regular class.

\begin{proposition}
\label{pro:str}
Let $T>0$. Suppose that the assumptions \textbf{(H1)}--\textbf{(H3)} are satisfied, the general mass exchange term $q:\,\mathbb{R}^3\to \mathbb{R}$ is continuous and fulfils the linear growth condition \eqref{ligro}.
\begin{itemize}
\item[(1)] In addition to the assumptions in Theorem \ref{thm:weak}-(1), we assume that
\begin{align}
\vp_0\in H^2(\Gamma),\quad \widetilde{\mu}_0:= -\Delta_\Gamma \vp_0+F'(\vp_0)\in H^1(\Gamma), \quad v_0\in H^1(\Gamma).
\label{inireg}
\end{align}
Then the global weak solution to problem \eqref{1.a}--\eqref{ini2} fulfils further regularity properties
\begin{align}
& \varphi \in L^{\infty}(0,T;H^3(\Gamma))\cap L^2(0,T;H^4(\Gamma))\cap H^{1}(0,T; H^1(\Gamma)),\notag \\
&\mu \in L^\infty(0,T;H^1(\Gamma))\cap  L^2(0,T;H^3(\Gamma))\cap H^{1}(0,T; (H^1(\Gamma))'),\notag \\
& v \in L^{\infty}(0,T;H^1(\Gamma))\cap L^{2}(0,T;H^2(\Gamma)) \cap H^1(0,T;L^2(\Gamma)).\notag
\end{align}
Moreover, there exists a constant $\sigma_T\in (0, 1)$ such that the phase function $\vp$ satisfies
\begin{align}
\|\vp(t)\|_{C(\Gamma)}\leq 1-\sigma_T,\qquad \forall\,t\in [0,T],\label{vp-sep1}
\end{align}
where $\sigma_T$ may depend  $\|u_0\|_{L^2(\Omega)}$, $\|\varphi_0\|_{H^1(\Gamma)}$, $\|\widetilde{\mu}_0\|_{H^1(\Gamma)}$, $\|v_0\|_{H^1(\Gamma)}$, $\langle\varphi_0\rangle_\Gamma$, $\int_\Gamma F(\varphi_0)\,\mathrm{d}S$, $\Omega$, $\Gamma$, coefficients of the system and $T$. If the mass exchange term $q$ takes the specific form of \eqref{q2m} under the assumption \textbf{(H4)} and $u_0\in H^1(\Omega)$, then we have
$$
u\in L^\infty(0,T; H^1(\Omega))\cap L^2(0,T;H^2(\Omega))\cap H^1(0,T;L^2(\Omega)).
$$

\item[(2)] In the equilibrium case, i.e., $q$ takes the specific form of \eqref{q1} under the assumption \textbf{(H5)}, if the additional assumption \eqref{inireg} is satisfied, then the unique solution satisfies
\begin{align}
& \|\varphi\|_{ L^{\infty}(0,+\infty;H^3(\Gamma))}
+ \|\mu\|_{ L^\infty(0,+\infty;H^1(\Gamma))} +\|v\|_{L^{\infty}(0,+\infty;H^1(\Gamma))}\leq C,
\label{unit-es2}
\end{align}
for some $C>0$ and there exists a constant $\sigma\in (0,1)$ such that
\begin{align}
\|\vp(t)\|_{C(\Gamma)}\leq 1-\sigma,\qquad \forall\,t\in [0,+\infty).
\label{vp-sep2}
\end{align}
The constants $C$ and $\sigma$ may depend on $\|u_0\|_{L^2(\Omega)}$, $\|\varphi_0\|_{H^1(\Gamma)}$, $\|\widetilde{\mu}_0\|_{H^1(\Gamma)}$, $\|v_0\|_{H^1(\Gamma)}$, $\langle\varphi_0\rangle_\Gamma$, $\int_\Gamma F(\varphi_0)\,\mathrm{d}S$, $\Omega$, $\Gamma$, coefficients of the system, but are independent of the time $t$. If we assume $u_0\in H^1(\Omega)$, then it holds
\begin{align}
\|u\|_{L^\infty(0,+\infty; H^1(\Omega))}\leq C, \label{unit-es3}
\end{align}
where $C>0$ also depends on $\|u_0\|_{H^1(\Omega)}$.
\end{itemize}
\end{proposition}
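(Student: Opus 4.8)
The plan is to establish the higher regularity by testing the weak formulation with carefully chosen test functions, working first at the level of the regularized/Galerkin approximation so that all manipulations are legitimate, and then passing to the limit. The starting point is the observation that under the assumption \eqref{inireg} the natural ``energy-type'' quantity to control is $\frac12\|\nabla_\Gamma\mu\|_{\bm L^2(\Gamma)}^2$ together with $\frac12\|\nabla_\Gamma\eta\|_{\bm L^2(\Gamma)}^2$ and $\frac{D}{2}\|\nabla u\|_{\bm L^2(\Omega)}^2$; differentiating the equations \eqref{1.c}, \eqref{1.e} formally in time and testing \eqref{1.c} with $\partial_t\mu$ (equivalently, testing the time-differentiated \eqref{1.c} by $\mu$), \eqref{1.e} with $\partial_t\eta$, and \eqref{1.a} with $\partial_t u$, one obtains a differential inequality for
\[
\Psi(t):=\tfrac12\|\nabla_\Gamma\mu(t)\|_{\bm L^2(\Gamma)}^2+\tfrac12\|\nabla_\Gamma\eta(t)\|_{\bm L^2(\Gamma)}^2+\tfrac{D}{2}\|\nabla u(t)\|_{\bm L^2(\Omega)}^2,
\]
plus dissipation terms $\|\partial_t\vp\|_{(H^1(\Gamma))'}^2$, $\|\partial_t v\|_{L^2(\Gamma)}^2$, $\|\partial_t u\|_{(H^1(\Omega))'}^2$. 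Here one crucially uses that $W''(\vp)=F''(\vp)-\theta_0$ is controlled once $\vp$ is strictly separated from $\pm1$; but the separation is exactly what we are trying to prove, so the argument must be bootstrapped: first one uses the $L^2(0,T;W^{2,p}(\Gamma))$-regularity of $\vp$ from Theorem \ref{thm:weak} together with \eqref{inireg} to get $\mu\in L^\infty$-in-time control on a short interval, then Lemma \ref{lem:sep}-(2) applied to the elliptic equation \eqref{ellp1} upgrades this to $\vp\in L^\infty(0,T;W^{2,p}(\Gamma))\hookrightarrow C(\Gamma)$ with the quantitative separation constant $\sigma_T$, and finally this feeds back to make the Gronwall argument for $\Psi$ global on $[0,T]$.

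Concretely, the order of steps is as follows. First I would work on the Galerkin system \eqref{rw1.a}--\eqref{rw1.e} (or the $\kappa$-regularized system), where all functions are smooth and time differentiation is justified; test the $\varphi$-equation by $\partial_t\mu_\kappa^n$ and the $v$-equation by $\partial_t\eta_\kappa^n$, use $\partial_t\mu=-\Delta_\Gamma\partial_t\vp+W''(\vp)\partial_t\vp-\tfrac12\partial_t\eta$ and $\partial_t\eta=\tfrac4\delta(\partial_t v-\tfrac12\partial_t\vp)$ to rewrite everything in terms of $\partial_t\vp$, $\partial_t v$, and handle the source terms using the linear growth of $q$ (or, in case (1) with \eqref{q2m}, the boundedness of $\widetilde h$); add the $u$-equation tested by $\partial_t u_\kappa^n$. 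This yields
\[
\frac{\mathrm d}{\mathrm dt}\Psi + c_0\big(\|\partial_t\vp\|_{(H^1(\Gamma))'}^2+\|\partial_t v\|_{L^2(\Gamma)}^2+\|\partial_t u\|_{(H^1(\Omega))'}^2\big)\le C\big(1+\Psi\big)\big(1+\|W''(\vp)\|_{L^p(\Gamma)}^2\big),
\]
where the coefficient $\|W''(\vp)\|_{L^p(\Gamma)}^2$ is controlled by Lemma \ref{lem:sep}-(2) once $\|\mu\|_{H^1(\Gamma)}$ (and hence the right side of \eqref{ellp1}) is bounded in $L^\infty$-in-time, and is a priori only in $L^1(0,T)$ from the weak-solution regularity — which is in fact enough to run a Gronwall argument of the form $\Psi'\le a(t)\Psi+b(t)$ with $a,b\in L^1(0,T)$. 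The initial value $\Psi(0)$ is finite precisely because of \eqref{inireg} (this is where $\widetilde\mu_0\in H^1(\Gamma)$ and $v_0\in H^1(\Gamma)$ enter). Having bounded $\Psi$, $\partial_t\vp\in L^2(0,T;(H^1(\Gamma))')$, $\partial_t v\in L^2(0,T;L^2(\Gamma))$, and $\mu\in L^\infty(0,T;H^1(\Gamma))$, I then return to \eqref{ellp1} and apply Lemma \ref{lem:sep}-(2) with $g=\mu+\theta_0\vp+\tfrac12\eta\in L^\infty(0,T;H^1(\Gamma))$ to get $\vp\in L^\infty(0,T;W^{2,p}(\Gamma))$, the strict separation \eqref{vp-sep1}, and $F'(\vp),F''(\vp)\in L^\infty(0,T;L^p(\Gamma))$. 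With the separation in hand, $W$ is effectively a smooth potential, so standard elliptic bootstrapping on \eqref{w1.d}, rewritten as $-\Delta_\Gamma\vp=\mu-W'(\vp)+\tfrac12\eta\in L^\infty(0,T;H^1(\Gamma))$, gives $\vp\in L^\infty(0,T;H^3(\Gamma))$; differentiating once more and using $\mu\in L^2(0,T;H^3(\Gamma))$ (which follows from $\partial_t\vp=\Delta_\Gamma\mu$ and $\partial_t\vp\in L^2(0,T;H^1(\Gamma))$ after a further elliptic estimate) gives $\vp\in L^2(0,T;H^4(\Gamma))$ and $\vp\in H^1(0,T;H^1(\Gamma))$. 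The $v$-regularity $v\in L^\infty(0,T;H^1(\Gamma))\cap L^2(0,T;H^2(\Gamma))\cap H^1(0,T;L^2(\Gamma))$ comes from \eqref{w1.f}, which expresses $v$ as a linear combination of $\eta$ and $\vp$, together with the bounds just obtained for $\eta$ and $\vp$; and $\mu\in H^1(0,T;(H^1(\Gamma))')$ follows from differentiating \eqref{w1.d} in time and using the regularity of $\partial_t\vp$, $\partial_t\eta$ and the (now bounded) $W''(\vp)$. For the extra claim when $q$ is of type \eqref{q2m} and $u_0\in H^1(\Omega)$, I would test \eqref{1.a} by $-\Delta u$ and use the boundedness of $\widetilde h$, the trace theorem and \eqref{inter1} to absorb the boundary term $\int_\Gamma \widetilde q\,\partial_{\bm n}u\,\mathrm dS$, obtaining $u\in L^\infty(0,T;H^1(\Omega))\cap L^2(0,T;H^2(\Omega))\cap H^1(0,T;L^2(\Omega))$.

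For part (2) — the equilibrium case \eqref{q1} under \textbf{(H5)} — the only new ingredient is to make all the above bounds uniform in $t$ on $[t_0,+\infty)$, i.e.\ to replace $C(T)$ by a constant independent of $T$. This rests on the dissipative energy equality \eqref{BELweak}, which since $A(t)\ge0$ gives $\mathcal E(u(t),\vp(t),v(t))\le\mathcal E(u_0,\vp_0,v_0)$ and $\int_0^\infty(D\|\nabla u\|^2+\|\nabla_\Gamma\mu\|^2+\|\nabla_\Gamma\eta\|^2)\,\mathrm d\tau<\infty$, so the quantity $a(t)+b(t)$ appearing in the Gronwall inequality for $\Psi$ is in $L^1(0,+\infty)$, not merely $L^1(0,T)$. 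Combining the global-in-time integrability of the dissipation with the differential inequality for $\Psi$ and a uniform Gronwall lemma (of the type: if $y'\le a y+b$ with $\int_t^{t+1}a$, $\int_t^{t+1}b$ and $\int_t^{t+1}y$ all bounded uniformly in $t$, then $y(t)$ is bounded uniformly for $t\ge1$) yields \eqref{unit-es2}; the separation constant $\sigma$ in \eqref{vp-sep2} is then uniform because the right-hand side $g$ of \eqref{ellp1} is bounded in $H^1(\Gamma)$ uniformly in $t\ge t_0$, and Lemma \ref{lem:sep}-(2) gives a $\sigma$ depending only on $\|g\|_{L^\infty(t_0,\infty;H^1(\Gamma))}$. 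The estimate \eqref{unit-es3} for $u$ follows likewise by the uniform Gronwall argument applied to $\|\nabla u\|_{\bm L^2(\Omega)}^2$, using that the boundary flux $q=-A(\eta-u)$ is controlled by the already-bounded $\|\eta\|_{L^2(\Gamma)}$ and $\|u\|_{L^2(\Gamma)}$ uniformly in time.

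The main obstacle is the circular dependence between the strict separation property and the $W''(\vp)$-dependent coefficient in the differential inequality for $\Psi$: one cannot bound $\|W''(\vp)\|_{L^p(\Gamma)}$ without separation, and one cannot obtain separation without first controlling $\mu$ in $H^1(\Gamma)$. The resolution — running the Gronwall argument with only $L^1_t$-coefficients, which is permissible because the weak solution already provides $\vp\in L^2(0,T;W^{2,p}(\Gamma))$ and hence, via Lemma \ref{lem:sep}, $\|W''(\vp(t))\|_{L^p(\Gamma)}\in L^1(0,T)$ after controlling $\mu(t)$ in $H^1$ for a.e.\ $t$ — is the delicate point, and in the equilibrium case one must additionally verify that these $L^1$-bounds are in fact uniform-in-$t$ integrals $\int_t^{t+1}$, which is where \eqref{BELweak} and the uniform Gronwall lemma are indispensable. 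A secondary technical point is that all the time-differentiation has to be carried out on the Galerkin level and the estimates transferred through the double limit $n\to\infty$, $\kappa\to0$ by lower semicontinuity, exactly as in the proof of Theorem \ref{thm:weak}.
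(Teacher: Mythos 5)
Your overall architecture matches the paper's (work at the Galerkin/regularized level, test the $\vp$-equation with $\partial_t\mu$, obtain the strict separation \emph{a posteriori} from Lemma \ref{lem:sep} once $\mu\in L^\infty(0,T;H^1(\Gamma))$ is known, bootstrap elliptic regularity, and use the dissipative energy identity plus a uniform Gronwall lemma in the equilibrium case). However, the central step --- closing the differential inequality for $\Psi$ --- contains a genuine gap. You introduce a coefficient $\|W''(\vp)\|_{L^p(\Gamma)}^2$ and assert that it lies in $L^1(0,T)$ ``from the weak-solution regularity''. It does not: the only available bound, Lemma \ref{lem:sep}-(2), gives $\|F''(\vp)\|_{L^p(\Gamma)}\le C\big(1+e^{C\|g\|_{H^1(\Gamma)}^2}\big)$ with $g=\mu+\theta_0\vp+\tfrac12\eta$, and $\|g\|_{H^1(\Gamma)}^2\in L^1(0,T)$ does not imply $e^{C\|g\|_{H^1(\Gamma)}^2}\in L^1(0,T)$. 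To make that coefficient integrable you would need precisely the $L^\infty(0,T;H^1(\Gamma))$ bound on $\mu$ that you are trying to prove, so the circularity is not resolved. The point you are missing is that no upper bound on $W''$ is needed at all: when \eqref{rw1.c} is tested with $\partial_t\mu_\kappa^n$, the potential enters only through $-\int_\Gamma W_\kappa''(\vp_\kappa^n)|\partial_t\vp_\kappa^n|^2\,\mathrm{d}S$, and the one-sided bound $W_\kappa''=F_\kappa''-\theta_0\ge\theta-\theta_0$ from \textbf{(H1)} makes this at most $(\theta_0-\theta)\|\partial_t\vp_\kappa^n\|_{L^2(\Gamma)}^2$; since $\langle\partial_t\vp_\kappa^n\rangle_\Gamma=0$, the interpolation $\|\partial_t\vp\|_{L^2(\Gamma)}^2\le C\|\partial_t\vp\|_{H^1(\Gamma)}\|\partial_t\vp\|_{(H^1(\Gamma))'}$ lets you absorb it into the good term $-\|\nabla_\Gamma\partial_t\vp_\kappa^n\|_{\bm{L}^2(\Gamma)}^2$, leaving only $\|\partial_t\vp_\kappa^n\|_{(H^1(\Gamma))'}^2+\|\partial_t v_\kappa^n\|_{(H^1(\Gamma))'}^2$, which are already in $L^1(0,T)$ from the lower-order estimates. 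This is how the paper closes the estimate; separation is then a consequence, not an input.

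A secondary flaw: for the bulk regularity of $u$ in the case \eqref{q2m} you propose testing \eqref{1.a} with $-\Delta u$ and absorbing a boundary term ``$\int_\Gamma \widetilde q\,\partial_{\bm n}u\,\mathrm{d}S$''. After integration by parts the boundary term is in fact $\tfrac1D\int_\Gamma q\,\partial_t u\,\mathrm{d}S$, i.e.\ it involves the trace of $\partial_t u$, which trace and interpolation inequalities do not control. The paper instead tests with $\partial_t u$ and removes the time derivative from $u$ on the boundary by rewriting $\int_\Gamma q\,\partial_t u\,\mathrm{d}S$ as the total time derivative of a functional $\mathcal{Y}_\kappa^n$ plus terms in $\partial_t v$, which is already known to lie in $L^2(0,T;L^2(\Gamma))$ at that stage. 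Your choice of test function for the $v$-equation ($\partial_t\eta$ instead of the paper's $-\Delta_\Gamma v$) is a harmless variation, but the two issues above must be repaired before the proof is complete.
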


\begin{remark}\label{rem:sep0}
Similar to \cite[Remark 3.1]{H1}, for the initial datum $\vp_0$ that satisfies $\vp_0\in H^2(\Gamma)$, $\widetilde{\mu}_0\in H^1(\Gamma)$ with
$\|\vp_0\|_{L^\infty(\Gamma)}\leq 1$ and $|\langle \vp_0\rangle_\Gamma|<1$, we can infer from Lemma \ref{lem:sep} that there exists $\sigma_0\in (0, 1)$ such that
$$
\|\vp_0\|_{C(\Gamma)}\leq 1-\sigma_0,
$$
namely, the initial phase function $\vp_0$ is indeed strictly separated from the pure states $\pm 1$. Then by the surface elliptic theory (see e.g., \cite{Aubin82}) and the assumption \textbf{(H1)}, we can further deduce that $\vp_0\in H^3(\Gamma)$. Thanks to this  observation, we do not need the cutoff procedure for the initial datum $\vp_0$ like that in \cite[Section 4]{GMT}.
\end{remark}

\subsubsection{Uniform estimates}

The proof of Proposition \ref{pro:str} essentially relies on some higher-order energy estimates in suitable Sobolev spaces for solutions to the regularized system \eqref{a1.a}--\eqref{aini2}.

In what follows, we consider the system \eqref{a1.a}--\eqref{aini2} with $\kappa$ satisfying
$$
0<\kappa< \overline{\kappa} :=\min\left\{\frac14\sigma_0,\,r_0\right\},
$$
where $\sigma_0\in (0,1)$ is given as in Remark \ref{rem:sep0}. \medskip

\textbf{Estimates for the initial data}. When $\kappa$ is chosen sufficiently small as above, it holds
$$
-\Delta_\Gamma \vp_0+ F'_\kappa(\vp_0)
= -\Delta_\Gamma \vp_0+ F'(\vp_0)=\widetilde{\mu}_0.
$$
Since $\varphi_0\in H^3(\Gamma)$ (see Remark \ref{rem:sep0}) and $v_0\in H^1(\Gamma)$, in the corresponding Galerkin scheme \eqref{rw1.a}--\eqref{rwini2}, we have the strong convergence of initial data
$$
\|\varphi_\kappa^n(0)-\varphi_0\|_{H^3(\Gamma)}\to 0,\quad
\|v_\kappa^n(0)-v_0\|_{H^1(\Gamma)}\to 0,\quad \text{as}\ n\to +\infty.
$$
By the Sobolev embedding theorem $H^2(\Gamma)\hookrightarrow C(\Gamma)$, we find
\begin{align}
\|\varphi_\kappa^n(0)\|_{C(\Gamma)}\leq 1-\frac12\sigma_0,\label{inivk1}
\end{align}
for $n$ sufficiently large (the situation that we are interested in). Thus, in light of \eqref{appini} and \eqref{fkk}, we can obtain the uniform upper bound
\begin{align}
\mathcal{E}_\kappa(u^n_\kappa(0),\varphi^n_\kappa(0),v^n_\kappa(0)) \leq C,
\label{appEa}
\end{align}
where $C>0$ depends on $\|u_0\|_{L^2(\Omega)}$, $\|\varphi_0\|_{H^1(\Gamma)}$, $\|v_0\|_{L^2(\Gamma)}$, $\int_\Gamma F(\vp_0)\,\mathrm{d}S$, $\sigma_0$, $|\Omega|$, $|\Gamma|$ and the structural coefficients, but neither on $n$ nor on $\kappa$.

Besides, thanks to the assumption \textbf{(H1)} and \eqref{inivk1}, we can infer that (cf. \cite[(4.39)]{GMT})
\begin{align}
 \|\widetilde{\mu}_\kappa^n(0)\|_{H^1(\Gamma)}
 &:= \|\mathbb{P}_{V_{n,\Gamma}}( -\Delta_\Gamma \mathbb{P}_{V_{n,\Gamma}} \varphi_0 + F'_\kappa(\mathbb{P}_{V_{n,\Gamma}} \varphi_0))\|_{H^1(\Gamma)}\non\\
&  \leq \| -\Delta_\Gamma \mathbb{P}_{V_{n,\Gamma}} \varphi_0+ F'_\kappa(\mathbb{P}_{V_{n,\Gamma}} \varphi_0)\|_{H^1(\Gamma)}\non\\
&  \leq \| -\Delta_\Gamma (\mathbb{P}_{V_{n,\Gamma}} \varphi_0 -\vp_0)\|_{H^1(\Gamma)}
+ \|F'(\mathbb{P}_{V_{n,\Gamma}} \varphi_0)-F'(\varphi_0)\|_{H^1(\Gamma)} + \|\widetilde{\mu}_0\|_{H^1(\Gamma)}\non\\
&  \leq  \|\widetilde{\mu}_0\|_{H^1(\Gamma)} +\| \mathbb{P}_{V_{n,\Gamma}} \varphi_0 -\vp_0\|_{H^3(\Gamma)} \non\\
&\quad + C \max_{|r|\leq 1-\overline{\kappa}}\big(|F''(r)|+ |F'''(r)|\big)\| \mathbb{P}_{V_{n,\Gamma}} \varphi_0 -\vp_0\|_{H^1(\Gamma)}\non \\
&  \leq \|\widetilde{\mu}_0\|_{H^1(\Gamma)} + 1,
\qquad \text{for}\ n\gg 1.
\label{es-inimu1}
\end{align}
As a result, we have
\begin{align}
\|\mu_\kappa^n(0)\|_{H^1(\Gamma)} &= \left\|\widetilde{\mu}_\kappa^n(0)-\theta_0  \varphi_\kappa^n(0) +\frac12   \eta_\kappa^n(0)\right\|_{H^1(\Gamma)}\non\\
&\leq  \|\widetilde{\mu}_\kappa^n(0)\|_{H^1(\Gamma)} +C \big(\|\mathbb{P}_{V_{n,\Gamma}} \varphi_0\|_{H^1(\Gamma)} +
\|\mathbb{P}_{V_{n,\Gamma}} v_0\|_{H^1(\Gamma)}\big)\non\\
&\leq \|\widetilde{\mu}_0\|_{H^1(\Gamma)} + C\big( \|\varphi_0\|_{H^1(\Gamma)} +  \|v_0\|_{H^1(\Gamma)} \big)+C,
\label{es-inimu2}
\end{align}
for sufficiently large $n$, with $C>0$ being independent of $n$ and $\kappa$.\medskip

Now we can proceed to derive estimates for solutions $(u^n_\kappa,\varphi^n_\kappa,v^n_\kappa,\mu^n_\kappa,\eta^n_\kappa)$ to the Galerkin scheme \eqref{rw1.a}--\eqref{rwini2} (recall Proposition \ref{exe:gar} for its existence) that are uniform with respect to both approximating parameters $n$ and $\kappa$. \medskip

\textbf{Lower-order estimates}.
First, we still have the mass conservation properties \eqref{rwmass1} and \eqref{rwmass2}.
Next, in the energy inequality \eqref{esL4}, using now \eqref{appEa}, we find, for $n\gg 1$,
$$\widetilde{\mathcal{E}}_\kappa(u^n_\kappa(0), \varphi^n_\kappa(0),v^n_\kappa(0))\leq C,$$
with $C>0$ being independent of $n$ and $\kappa$.
This allows us to obtain
\begin{align}
&  \|u_\kappa^n(t)\|_{L^2(\Omega)}^2
+ \|\varphi_\kappa^n(t)\|_{H^1(\Gamma)}^2
+ \|v_\kappa^n(t)\|_{L^2(\Gamma)}^2
+ \|\eta_\kappa^n(t)\|_{L^2(\Gamma)}^2 \non\\
&\qquad  +\int_0^t \Big(\|\nabla u^n_\kappa(\tau)\|_{\bm{L}^2(\Omega)}^2
+\|\nabla_\Gamma \mu^n_\kappa(\tau)\|_{\bm{L}^2(\Gamma)}^2+\|\nabla_\Gamma \eta^n_\kappa(\tau)\|_{\bm{L}^2(\Gamma)}^2+ \|\Delta_\Gamma \varphi_\kappa^n(\tau)\|_{L^2(\Gamma)}^2\Big)\,\mathrm{d}\tau \non\\
&\quad \leq C(T),\qquad \forall\, t\in (0,T].
\label{esL5b}
\end{align}
Using \eqref{esL5b} and an argument similar to that for \eqref{esmuh1a}, we get the estimate \eqref{esmuh1} for $\mu_\kappa^n$, which becomes now independent of $n$ and $\kappa$. In turn, we can also recover the uniform estimates \eqref{estu}--\eqref{estvv} on the time derivatives $\partial_t u_\kappa^n$, $\partial_t \vp_\kappa^n$ and $\partial_t v_\kappa^n$.

In summary, we can define the approximate solutions  $(u^n_\kappa,\varphi^n_\kappa,v^n_\kappa,\mu^n_\kappa,\eta^n_\kappa)$ on the whole time interval $[0,T]$, with corresponding lower-order estimates that are updated to be independent of both approximating parameters $n$ and $\kappa$ provided that $n\gg 1$ and $\kappa\ll 1$. \medskip

\textbf{Higher-order estimates}.
The crucial step is to derive uniform higher order estimates.
To this end, taking $\zeta=\partial_t \mu_\kappa^n$ in \eqref{rw1.c} and using the uniform lower bound of $W''_\kappa$, we obtain
\begin{align}
&\frac12\frac{\mathrm{d}}{\mathrm{d}t}\|\nabla_\Gamma \mu_\kappa^n\|_{L^2(\Gamma)}^2
  = - \int_\Gamma \partial_t \vp_\kappa^n \partial_t \mu_\kappa^n\,\mathrm{d}S\non\\
&\quad  = -\|\nabla_\Gamma  \partial_t \vp_\kappa^n \|_{\bm{L}^2(\Gamma)}^2 -\int_\Gamma W_\kappa''(\vp_\kappa^n)|\partial_t \vp_\kappa^n|^2\,\mathrm{d}S
+ \frac{2}{\delta} \int_\Gamma \partial_t v_\kappa^n \partial_t \vp_\kappa^n \,\mathrm{d}S\non\\
&\qquad -\frac{1}{\delta}\| \partial_t \vp_\kappa^n\|_{L^2(\Gamma)}^2\non\\
&\quad \leq -\|\nabla_\Gamma  \partial_t \vp_\kappa^n \|_{\bm{L}^2(\Gamma)}^2 + C\| \partial_t \vp_\kappa^n\|_{L^2(\Gamma)}^2
+ C \|  \partial_t \vp_\kappa^n\|_{H^1(\Gamma)}\| \partial_t v_\kappa^n \|_{(H^1(\Gamma))'}\non\\
&\quad \leq -\|\nabla_\Gamma  \partial_t \vp_\kappa^n \|_{\bm{L}^2(\Gamma)}^2 + C \|  \partial_t \vp_\kappa^n\|_{H^1(\Gamma)}\big(\| \partial_t \vp_\kappa^n\|_{(H^1(\Gamma))'}+\| \partial_t v_\kappa^n \|_{(H^1(\Gamma))'}\big)\non\\
&\quad \leq - \frac12 \|\nabla_\Gamma  \partial_t \vp_\kappa^n \|_{\bm{L}^2(\Gamma)}^2 +C \big(\| \partial_t \vp_\kappa^n\|_{(H^1(\Gamma))'}^2+\| \partial_t v_\kappa^n \|_{(H^1(\Gamma))'}^2\big),
\label{esmuH1a}
\end{align}
where $C>0$ only depends on $\Gamma$, $\theta$, $\theta_0$ and $\delta$. Here, we have used the fact $\langle  \partial_t \vp_\kappa^n\rangle_\Gamma=0$, the interpolation $\| \partial_t \vp_\kappa^n\|_{L^2(\Gamma)}^2\leq C\| \partial_t \vp_\kappa^n\|_{H^1(\Gamma)}\| \partial_t \vp_\kappa^n\|_{(H^1(\Gamma))'}$, Poincar\'e's inequality and Young's inequality.

Next, taking $\zeta=-\Delta_\Gamma v_\kappa^n$ in \eqref{rw1.e}, we obtain
\begin{align}
&\frac12 \frac{\mathrm{d}}{\mathrm{d}t}\|\nabla_\Gamma v_\kappa^n\|_{\bm{L}^2(\Gamma)}^2 + \frac{4}{\delta}\|\Delta_\Gamma v_\kappa^n\|_{L^2(\Gamma)}^2\non\\
&\quad = \frac{2}{\delta}\int_\Gamma \Delta_\Gamma \vp_\kappa^n\Delta_\Gamma v_\kappa^n\,\mathrm{d}S -\int_\Gamma q_\kappa^n \Delta_\Gamma v_\kappa^n\,\mathrm{d}S\non\\
&\quad \leq \frac{2}{\delta}\|\Delta_\Gamma v_\kappa^n\|_{L^2(\Gamma)}^2 +C \|\Delta_\Gamma \vp_\kappa^n\|_{L^2(\Gamma)}^2 +C \|q_\kappa^n \|_{L^2(\Gamma)}^2\non\\
&\quad \leq \frac{2}{\delta}\|\Delta_\Gamma v_\kappa^n\|_{L^2(\Gamma)}^2 +C \| \vp_\kappa^n\|_{H^2(\Gamma)}^2 +  C \| u_\kappa^n\|_{L^2(\Gamma)}^2  +  C \| v_\kappa^n\|_{L^2(\Gamma)}^2 +C \non\\
& \quad \leq \frac{2}{\delta}\|\Delta_\Gamma v_\kappa^n\|_{L^2(\Gamma)}^2 + C \| \vp_\kappa^n\|_{H^2(\Gamma)}^2 +  C \| u_\kappa^n\|_{H^1(\Omega)}^2  +  C \| v_\kappa^n\|_{L^2(\Gamma)}^2 +C,
\label{esvH1a}
\end{align}
where $C>0$ only depends on $q$, $\Omega$, $\Gamma$ and $\delta$.

It follows from \eqref{esmuH1a} and \eqref{esvH1a} that
\begin{align}
& \frac{\mathrm{d}}{\mathrm{d}t}\Big(\|\nabla_\Gamma \mu_\kappa^n\|_{\bm{L}^2(\Gamma)}^2+ \|\nabla_\Gamma v_\kappa^n\|_{\bm{L}^2(\Gamma)}^2\Big)
+ \|\nabla_\Gamma  \partial_t \vp_\kappa^n \|_{\bm{L}^2(\Gamma)}^2 + \frac{4}{\delta}\|\Delta_\Gamma v_\kappa^n\|_{L^2(\Gamma)}^2\non\\
&\quad \leq C \big(\| \partial_t \vp_\kappa^n\|_{(H^1(\Gamma))'}^2+\| \partial_t v_\kappa^n \|_{(H^1(\Gamma))'}^2 + \| \vp_\kappa^n\|_{H^2(\Gamma)}^2 +   \| u_\kappa^n\|_{H^1(\Omega)}^2  +  \| v_\kappa^n\|_{L^2(\Gamma)}^2 +1\big).
\label{esmvH1b}
\end{align}
The right-hand side of \eqref{esmvH1b} belongs to $L^1(0,T)$ thanks to the uniform lower-order estimates in the previous step.
Hence, in view of \eqref{es-inimu2}, we can integrate \eqref{esmvH1b} with respect to time and obtain
\begin{align}
 & \|\nabla_\Gamma \mu_\kappa^n(t)\|_{\bm{L}^2(\Gamma)}^2+ \|\nabla_\Gamma v_\kappa^n(t)\|_{\bm{L}^2(\Gamma)}^2
+ \int_0^t \left(\|\nabla_\Gamma  \partial_t \vp_\kappa^n (\tau)\|_{\bm{L}^2(\Gamma)}^2 + \frac{4}{\delta}\|\Delta_\Gamma v_\kappa^n(\tau)\|_{L^2(\Gamma)}^2\right)\,\mathrm{d}\tau\non\\
&\quad \leq  C\big(\|\widetilde{\mu}_0\|_{H^1(\Gamma)}^2 + \|v_0\|_{H^1(\Gamma)}^2\big) + C(T),\qquad \forall\, t\in [0,T],
\label{esmvH1c}
\end{align}
where $C(T)>0$ may depend on $\|u_0\|_{L^2(\Omega)}$, $\|\varphi_0\|_{H^1(\Gamma)}$, $\|v_0\|_{L^2(\Gamma)}$, $\int_\Gamma F(\vp_0)\,\mathrm{d}S$, $\sigma_0$, $\Omega$, $\Gamma$, $T$ and the structural coefficients, but is independent of $n$ and $\kappa$. Similar to \eqref{SSSe1} and \eqref{esmuh1a}, we can deduce from \eqref{esmvH1c} that
\begin{align}
\sup_{t\in[0,T]}\|\mu_\kappa^n(t)\|_{H^1(\Gamma)}\leq C(T).\label{esmuH1b}
\end{align}
The above estimate combined with \eqref{esL5b} gives (cf. \eqref{vp-H2})
\begin{align}
\sup_{t\in[0,T]}\|\vp_\kappa^n(t)\|_{H^2(\Gamma)}\leq C(T).\label{vpLiH2}
\end{align}
Moreover, we have
\begin{align}
\int_0^t \|\partial_t v_\kappa^n(\tau)\|_{L^2(\Gamma)}^2\,\mathrm{d}\tau
&\leq  C\int_0^t \big(\|\Delta_\Gamma v _\kappa^n(\tau)\|_{L^2(\Gamma)}^2 + \|\Delta_\Gamma \vp_\kappa^n(\tau)\|_{L^2(\Gamma)}^2 + \|q_\kappa^n\|_{L^2(\Gamma)}^2\big)\,\mathrm{d}\tau \non\\
&\leq C\int_0^t \big(\|v _\kappa^n(\tau)\|_{H^2(\Gamma)}^2 + \|\vp_\kappa^n(\tau)\|_{H^2(\Gamma)}^2 + \|u_\kappa^n\|_{H^1(\Omega)}^2+1\big)\,\mathrm{d}\tau \non\\
& \leq C(T),\qquad \forall\, t\in [0,T].
\label{esvH1L2}
\end{align}

When $q$ takes the specific form \eqref{q2m} under the assumption \textbf{(H4)} and $u_0\in H^1(\Omega)$, we can say more about $u_\kappa^n$. Taking $\zeta = \partial_t u_\kappa^n $ in \eqref{rw1.a}, using integration by parts, we obtain
\begin{align}
\frac{\mathrm{d}}{\mathrm{d}t} \mathcal{Y}_\kappa^n(t)+ \|\partial_t u_\kappa^n\|_{L^2(\Omega)}^2
& = - B_1 \int_\Gamma \widetilde{H}(u_\kappa^n) \partial_t v_\kappa^n\,\mathrm{d}S - B_2 \int_\Gamma u_\kappa^n \partial_t v_\kappa^n \,\mathrm{d}S,
\label{dtu}
\end{align}
where
\begin{align}
 \mathcal{Y}_\kappa^n(t)
 &=\frac{D}{2}\|\nabla u_\kappa^n(t)\|_{\bm{L}^2(\Omega)}^2 + \frac{B_1}{2}\|u_\kappa^n(t)\|_{L^2(\Gamma)}^2 - B_1 \int_\Gamma \widetilde{H}(u_\kappa^n) v_\kappa^n\,\mathrm{d}S- B_2\int_\Gamma u_\kappa^nv_\kappa^n\,\mathrm{d}S,  \non
\end{align}
and
$$
\widetilde{H}(r)=\int_0^r \widetilde{h}(s)\,\mathrm{d}s.
$$
Thanks to the assumption \textbf{(H4)}, the function $\widetilde{H}$ has at most a linear growth on $\mathbb{R}$. Then using the interpolation inequality \eqref{inter1}, H\"{o}lder's and Young's inequalities, we find
$$
\mathcal{Y}_\kappa^n(t)\leq C_1\big(\|u_\kappa^n(t)\|_{H^1(\Omega)}^2 +  \|v_\kappa^n(t)\|_{L^2(\Gamma)}^2 +1\big),
$$
$$
\mathcal{Y}_\kappa^n(t)\geq C_2 \|u_\kappa^n(t)\|_{H^1(\Omega)}^2 -
C_3\big(\|u_\kappa^n(t)\|_{L^2(\Omega)}^2 +  \|v_\kappa^n(t)\|_{L^2(\Gamma)}^2 +1\big),
$$
where the positive constants $C_1,C_2,C_3$ depend on $D$, $B_1$, $B_2$, $\widetilde{h}$, $\Omega$ and $\Gamma$, but are independent of $n$ and $\kappa$.
In view of \eqref{esL5b}, there exists some $C_4>0$ such that
$$
\widetilde{\mathcal{Y}}_\kappa^n(t)=\mathcal{Y}_\kappa^n(t)+C_4\geq 0,\qquad \forall\, t\in [0,T].
$$
Moreover, it follows from \eqref{dtu} that
\begin{align}
 \frac{\mathrm{d}}{\mathrm{d}t} \widetilde{\mathcal{Y}}_\kappa^n(t)+ \|\partial_t u_\kappa^n\|_{L^2(\Omega)}^2
& \leq C\big(1+\|u_\kappa^n(t)\|_{L^2(\Gamma)}\big)\|\partial_t v_\kappa^n(t)\|_{L^2(\Gamma)}\non\\
&\leq C \|u_\kappa^n(t)\|_{H^1(\Omega)}^2 + C\big(\|\partial_t v_\kappa^n(t)\|_{L^2(\Gamma)}^2+1\big),
\label{dtu1}
\end{align}
where the two terms on the right-hand side belongs to $L^1(0,T)$ thanks to \eqref{esL5b} and \eqref{esvH1L2}. Hence, integrating \eqref{dtu1} in time, we obtain
\begin{align}
\|u_\kappa^n(t)\|_{H^1(\Omega)}^2 +\int_0^{t} \|\partial_t u_\kappa^n(\tau)\|_{L^2(\Omega)}^2\,\mathrm{d}\tau \leq C(T),\qquad \forall\,t\in (0,T],\label{es-unitime5a}
\end{align}
where $C>0$ depends on $\|u_0\|_{H^1(\Omega)}$, $\|\varphi_0\|_{H^2(\Gamma)}$, $\|v_0\|_{H^1(\Gamma)}$, $\|\widetilde{\mu}_0\|_{H^1(\Gamma)}$, $\langle\varphi_0\rangle_\Gamma$, $\int_\Gamma F(\varphi_0)\,\mathrm{d}S$, $\Omega$, $\Gamma$ and the structural coefficients, but not on $n$ and $\kappa$.

\subsubsection{Passage to the limit}
Combining the estimates obtained in the previous step, we find
\begin{align*}
&\vp_\kappa^n\ \ \text{is uniform bounded in}\ \ L^\infty(0,T;H^2(\Gamma))\cap H^1(0,T;H^1(\Gamma)),\\
&\mu_\kappa^n\ \ \text{is uniform bounded in}\ \ L^\infty(0,T;H^1(\Gamma)),\\
&v_\kappa^n\ \ \text{is uniform bounded in}\ \ L^\infty(0,T;H^1(\Gamma))\cap L^2(0,T;H^2(\Gamma))\cap H^1(0,T;L^2(\Gamma)).
\end{align*}
Thus, by the standard compactness argument mentioned before, we can first pass to the limit as $n\to +\infty$ and then as $\kappa\to 0^+$ to obtain the existence of a global weak solution $(u,\vp,v,\mu,\eta)$ on $[0,T]$ with further regularity properties indicated above.
By interpolation, we have $v\in C([0,T];H^1(\Gamma))$. Besides, from the surface elliptic theory, we find
\begin{align}
\int_0^T\|\mu(t)\|_{H^3(\Gamma)}^2\,\mathrm{d}t\leq C \int_0^T\big(\|\partial_t \vp(t)\|_{H^1(\Gamma)}^2 + \|\mu(t)\|_{H^1(\Gamma)}^2\big)\,\mathrm{d}t\leq C(T),
\end{align}
so that $\mu\in L^2(0,T;H^3(\Gamma))$. Here, we also remark that the solution is unique provided that $q$ is globally Lipschitz continuous on $\mathbb{R}^3$.

If $q$ takes the specific form \eqref{q2m} and $u_0\in H^1(\Omega)$, then we have
 $$
 u\in L^\infty(0,T;H^1(\Omega))\cap H^1(0,T;L^2(\Omega)).
 $$
 Thanks to the assumption \textbf{(H4)}, $q=q(u,v)\in C^1(\mathbb{R}^2)$ and fulfils the growth condition as in \cite[(2.17)]{AK20}. Thus, we can apply \cite[Lemma 3.1]{AK20} to
 conclude that $q\in L^2(0,T;H^{1/2}(\Gamma))$ (see \cite[Page 325]{AK20} for further details). This property combined with $\partial_t u \in L^2(0,T;L^2(\Omega))$ and the elliptic estimate for $u$ yields $u\in L^2(0,T;H^2(\Omega))$. By interpolation, we also obtain $u\in C([0,T];H^1(\Omega))$.
%

\subsubsection{Separation property and further regularity}
Since $\vp$ satisfies the surface elliptic equation \eqref{ellp1} and the nonlinearity $F$ now fulfils the assumptions \textbf{(H1)}, \textbf{(H2)}, we can apply Lemma \ref{lem:sep}, \eqref{esL5b}, \eqref{esmvH1c} and \eqref{esmuH1b} to conclude that
\begin{align}
& \varphi  \in L^\infty(0,T; W^{2,p}(\Gamma)), \quad F''(\vp) \in L^\infty(0,T; L^p(\Gamma)),\quad  F'(\vp) \in L^\infty(0,T; W^{1,p}(\Gamma)),
\label{LLP}
\end{align}
for any $2\leq p<+\infty$. Moreover, there exists $\sigma_T\in (0,\sigma_0]$ such that
\begin{align}
\|\vp(t)\|_{L^\infty(\Gamma)}\leq 1-\sigma_T\quad \text{for almost all}\ t\in [0,T].
\label{vp-sep3}
\end{align}
The strict separation property \eqref{vp-sep3} enables us to say more about the regularity of $\vp$. Since $F\in C^3([-1+\sigma_T,1-\sigma_T])$, then from \eqref{LLP} and \eqref{vp-sep3}, we find
$$
F'(\vp)\in L^\infty(0,T;H^2(\Gamma)).
$$
Using \eqref{LLP}, the facts
$$\mu\in L^\infty(0,T;H^1(\Gamma))\cap L^2(0,T;H^3(\Gamma)),\quad v\in L^\infty(0,T;H^1(\Gamma))\cap L^2(0,T;H^2(\Gamma)),
$$
and the surface elliptic theory for $\vp$, we further obtain
$$
\vp\in L^\infty(0,T;H^3(\Gamma))\cap L^2(0,T;H^4(\Gamma)).
$$
Concerning the time derivative of $\mu$, following the argument in \cite{GMT}, we introduce the difference quotient of a function $f$:
$$
\partial_t^h f(t)= \frac{1}{h}[f(t+h)-f(t)],\qquad \forall\,h\in (0,1).
$$
Then for any $\zeta\in H^1(\Gamma)$ with $\|\zeta\|_{H^1(\Gamma)}\leq 1$,
from the identity
\begin{align}
\int_\Gamma \partial_t^h \mu \zeta\,\mathrm{d}S = \int_\Gamma \nabla_\Gamma \partial_t^h\vp \cdot\nabla_\Gamma  \zeta\,\mathrm{d}S
+ \int_\Gamma \Big(\partial_t^h W'(\vp) -\frac{1}{2}\partial_t^h \eta\Big)  \zeta\,\mathrm{d}S
\non
\end{align}
we get
\begin{align*}
\left|\int_\Gamma \partial_t^h \mu \zeta\,\mathrm{d}S\right|
& \leq \left|\int_\Gamma \nabla_\Gamma \partial_t^h\vp \cdot\nabla_\Gamma  \zeta\,\mathrm{d}S\right| + \left|\int_\Gamma \partial_t^h W'(\vp) \zeta\,\mathrm{d}S\right| + \frac{1}{2}\left|\int_\Gamma \partial_t^h\eta \zeta \,\mathrm{d}S\right|\non\\
&  \leq C\big(\|\partial_t^h \vp\|_{H^1(\Gamma)}
+ \|\partial_t^h v\|_{L^2(\Gamma)}\big)\|\zeta\|_{H^1(\Gamma)}\\
&\quad  + C \int_0^1 \|W''(s\vp(t+h)+(1-s)\vp(t))\|_{L^\infty(\Gamma)}\,\mathrm{d}s \|\partial_t^h \vp\|_{L^2(\Gamma)}\|\zeta\|_{L^2(\Gamma)}\non\\
& \leq   C\big(\|\partial_t^h \vp\|_{H^1(\Gamma)}
+ \|\partial_t^h v\|_{L^2(\Gamma)}\big)\|\zeta\|_{H^1(\Gamma)} \\
&\qquad  + C \max_{r\in[-1+\sigma_T,1-\sigma_T]}|W''(r)|\|\partial_t^h \vp\|_{L^2(\Gamma)}\|\zeta\|_{L^2(\Gamma)}\non\\
& \leq C\big(\|\partial_t^h \vp\|_{H^1(\Gamma)}
+ \|\partial_t^h v\|_{L^2(\Gamma)}\big),
\end{align*}
where $C>0$ is independent of $h$. Since $\partial_t\vp\in L^2(0,T;H^1(\Gamma))$ and $\partial_t v\in L^2(0,T;L^2(\Gamma))$, we have
$$
\|\partial_t^h \vp(t)\|_{H^1(\Gamma)}
\leq \frac{1}{h}\int_t^{t+h}\| \partial_t  \vp(\tau)\|_{H^1(\Gamma)}\,\mathrm{d}\tau \to_{h\to 0} \| \partial_t  \vp(t)\|_{H^1(\Gamma)},
$$
$$
\|\partial_t^h v(t)\|_{L^2(\Gamma)}
\leq \frac{1}{h}\int_t^{t+h}\| \partial_t  v(\tau)\|_{L^2(\Gamma)}\,\mathrm{d}\tau \to_{h\to 0} \| \partial_t  v(t)\|_{L^2(\Gamma)},
$$
for almost all $t\in[0,T]$, and thus
$$
\|\partial_t^h \vp\|_{L^2(0,T-h;H^1(\Gamma))}
+ \|\partial_t^h v\|_{L^2(0,T-h;L^2(\Gamma))}
\leq \|\partial_t  \vp\|_{L^2(0,T ;H^1(\Gamma))}
+ \|\partial_t  v\|_{L^2(0,T;L^2(\Gamma))}.
$$
As a consequence, $\|\partial_t^h \mu\|_{L^2(0,T;(H^1(\Gamma))')}$ is uniformly bounded with respect to $h$. Taking $h\to 0$, we find that the distributional derivative $\partial_t\mu$ satisfies
$$
\partial_t\mu\in L^2(0,T;(H^1(\Gamma))').
$$
Thanks to the Aubin-Lions-Simon lemma, we can conclude
$$
\mu\in C([0,T];H^1(\Gamma)),\qquad \vp\in C([0,T]; H^2(\Gamma)).
$$
Hence, by the Sobolev embedding theorem $H^2(\Gamma)\hookrightarrow C(\Gamma)$ and \eqref{vp-sep3}, we arrive at the conclusion \eqref{vp-sep1}.

This completes the proof of Proposition \ref{pro:str}-(1).

\subsubsection{Refined results in the equilibrium case}
\label{sec:rec}
When $q$ takes the specific form \eqref{q1}, similar to \eqref{BELweak} and \eqref{ues2}, we have the following uniform estimate for the approximate solutions:
\begin{align}
 & \|u_\kappa^n\|_{L^\infty(0,+\infty;L^2(\Omega))}
 + \|\varphi_\kappa^n\|_{L^\infty(0,+\infty;H^1(\Gamma))}
 + \|v_\kappa^n\|_{L^\infty(0,+\infty;L^2(\Gamma))}\notag\\
 &\quad + \|\nabla u_\kappa^n\|_{L^2(0,+\infty;\bm{L}^2(\Omega))}
 + \|\nabla_\Gamma \mu_\kappa^n\|_{L^2(0,+\infty;\bm{L}^2(\Gamma))}
 + \|\nabla_\Gamma \eta_\kappa^n\|_{L^2(0,+\infty;\bm{L}^2(\Gamma))}\leq C,
 \label{ues2a}
\end{align}
 where the constant $C>0$ depends on $\|u_0\|_{L^2(\Omega)}$, $\|\varphi_0\|_{H^1(\Gamma)}$, $\|v_0\|_{L^2(\Gamma)}$, $\langle\varphi_0\rangle_\Gamma$, $\int_\Gamma F(\varphi_0)\,\mathrm{d}S$, $|\Omega|$, $|\Gamma|$ and the coefficients $D$, $\delta$, but not on $n$ and $\kappa$.
 Then we can conclude
\begin{align}
 & \int_t^{t+1} \big(\| \nabla_\Gamma\mu_\kappa^n(\tau)\|_{\bm{L}^2(\Gamma)}^2
 + \| v_\kappa^n (\tau)\|_{H^1(\Gamma)}^2
 + \| \vp_\kappa^n(\tau)\|_{H^2(\Gamma)}^2
 + \| u_\kappa^n(\tau)\|_{H^1(\Omega)}^2\big)\,\mathrm{d}\tau\non \\
 &\quad \leq C,\qquad \forall\, t\geq 0,
 \label{es-unitime1}
\end{align}
 where $C>0$ is independent of  $n$, $\kappa$ and $t$.

 Next, the differential inequality \eqref{esmvH1b} can be written as
\begin{align}
& \frac{\mathrm{d}}{\mathrm{d}t}\big(\|\nabla_\Gamma \mu_\kappa^n\|_{\bm{L}^2(\Gamma)}^2+ \|\nabla_\Gamma v_\kappa^n\|_{\bm{L}^2(\Gamma)}^2\big)
+ \|\nabla_\Gamma  \partial_t \vp_\kappa^n \|_{\bm{L}^2(\Gamma)}^2
+ \frac{4}{\delta}\|\Delta_\Gamma v_\kappa^n\|_{L^2(\Gamma)}^2\non\\
&\quad \leq C \big(\|\nabla_\Gamma \mu_\kappa^n\|_{\bm{L}^2(\Gamma)}^2
+ \|\nabla_\Gamma v_\kappa^n\|_{\bm{L}^2(\Gamma)}^2\big)
+ C\big(\| \vp_\kappa^n\|_{H^2(\Gamma)}^2
+ \| u_\kappa^n\|_{H^1(\Omega)}^2
+ \| v_\kappa^n\|_{L^2(\Gamma)}^2 +1\big).
\label{esmvH1d}
\end{align}
Taking advantage of \eqref{es-unitime1}, we can apply the uniform Gronwall lemma (see e.g., \cite[Chapter III, Lemma 1.1]{T}) to obtain
\begin{align}
\|\nabla_\Gamma \mu_\kappa^n(t)\|_{\bm{L}^2(\Gamma)}^2
+ \|\nabla_\Gamma v_\kappa^n(t)\|_{\bm{L}^2(\Gamma)}^2\leq C,
\qquad \forall\, t\geq 1,
\label{es-unitime2}
\end{align}
where $C>0$ does not depend on $\|\nabla_\Gamma \mu_\kappa^n(0)\|_{\bm{L}^2(\Gamma)}$, $\|\nabla_\Gamma v_\kappa^n(0)\|_{\bm{L}^2(\Gamma)}$, $t$, $n$ and $\kappa$.
On the other hand, the estimate on the finite time interval $[0,1]$ can be derived by directly integrating \eqref{esmvH1d} in time and using the estimate \eqref{es-unitime1} with $t=0$:
\begin{align}
&\|\nabla_\Gamma \mu_\kappa^n(t)\|_{\bm{L}^2(\Gamma)}^2
+ \|\nabla_\Gamma v_\kappa^n(t)\|_{\bm{L}^2(\Gamma)}^2\non\\
&\quad \leq C\big(\|\nabla_\Gamma \mu_\kappa^n(0)\|_{\bm{L}^2(\Gamma)}^2
+ \|\nabla_\Gamma v_\kappa^n(0)\|_{\bm{L}^2(\Gamma)}^2\big)+C,
\qquad \forall\, t\in [0,1],
\label{es-unitime3}
\end{align}
where $C>0$ does not depend on $n$ and $\kappa$. Similar to \eqref{SSSe}, \eqref{SSSe1}, we can apply Poincar\'e's inequality to obtain
\begin{align}
\|\mu_\kappa^n(t)\|_{H^1(\Gamma)}
\leq \|\nabla_\Gamma \mu_\kappa^n(t)\|_{\bm{L}^2(\Gamma)}
+ \left|\int_\Gamma \mu_\kappa^n(t)\,\mathrm{d}S\right|
\leq C,\qquad \forall\, t\geq 0.\label{es-unitime3b}
\end{align}
Besides, we can deduce from \eqref{esmvH1d}--\eqref{es-unitime3} that
\begin{align}
 & \int_t^{t+1} \big(\|\partial_t \vp_\kappa^n(t)\|_{H^1(\Gamma)}^2+ \|\partial_t v_\kappa^n(t)\|_{L^2(\Gamma)}^2\big)\,\mathrm{d}\tau \leq C,\qquad \forall\, t\geq 0.
 \label{es-unitime4}
 \end{align}

We proceed to derive higher-order estimates for $u_\kappa^n$. Taking $\zeta = \partial_t u_\kappa^n $ in \eqref{rw1.a}, using integration by parts, we obtain
\begin{align}
& \frac{\mathrm{d}}{\mathrm{d}t} \mathcal{Y}_\kappa^n(t)+ \|\partial_t u_\kappa^n\|_{L^2(\Omega)}^2\non\\
&\quad  = -\frac{A'(t)}{2}\int_\Gamma (u_\kappa^n)^2 \,\mathrm{d}S +
\frac{2A(t)}{\delta} \int_\Gamma\partial_t \vp_\kappa^nu_\kappa^n\,\mathrm{d}S -\frac{4A(t)}{\delta} \int_\Gamma\partial_t v_\kappa^n u_\kappa^n\,\mathrm{d}S\non\\
& \qquad + \frac{2A'(t)}{\delta}\int_\Gamma (1+\vp_\kappa^n)u_\kappa^n\,\mathrm{d}S  - \frac{4A'(t)}{\delta}\int_\Gamma v_\kappa^n u_\kappa^n\,\mathrm{d}S \non\\
&\quad
\leq C \big(\|\partial_t \vp_\kappa^n\|_{L^2(\Gamma)} + \| \vp_\kappa^n\|_{L^2(\Gamma)} \big) \|u_\kappa^n\|_{L^2(\Gamma)}
+C \big(\|\partial_t v_\kappa^n\|_{L^2(\Gamma)}  + \|  v_\kappa^n\|_{L^2(\Gamma)}  \big)\|u_\kappa^n\|_{L^2(\Gamma)} \non\\
&\qquad +C \|u_\kappa^n\|_{L^2(\Gamma)}^2+C\non\\
&\quad
\leq  C  \|\partial_t \vp_\kappa^n\|_{L^2(\Gamma)} ^2 +  \|\partial_t v_\kappa^n\|_{L^2(\Gamma)}^2+ C \|u_\kappa^n\|_{H^1(\Omega)}^2+ C\| \vp_\kappa^n\|_{L^2(\Gamma)}^2
+C\|v_\kappa^n\|_{L^2(\Gamma)}^2 +C,
\label{dtue}
\end{align}
where
\begin{align}
 \mathcal{Y}_\kappa^n(t)
 &=\frac{D}{2}\|\nabla u_\kappa^n(t)\|_{\bm{L}^2(\Omega)}^2 + \frac{A(t)}{2}\|u_\kappa^n(t)\|_{L^2(\Gamma)}^2 + \frac{2A(t)}{\delta}\int_\Gamma (1+\vp_\kappa^n(t))u_\kappa^n(t)\,\mathrm{d}S\non\\
 &\quad - \frac{4A(t)}{\delta}\int_\Gamma v_\kappa^n(t) u_\kappa^n(t)\,\mathrm{d}S.\non
\end{align}
Using the assumption \textbf{(H5)}, H\"{o}lder's and Young's inequalities, we find
$$
\mathcal{Y}_\kappa^n(t)\leq C_1\big(\|u_\kappa^n(t)\|_{H^1(\Omega)}^2 + \|\vp_\kappa^n(t)\|_{L^2(\Gamma)}^2 +  \|v_\kappa^n(t)\|_{L^2(\Gamma)}^2 +1\big),
$$
$$
\mathcal{Y}_\kappa^n(t)\geq C_2 \|u_\kappa^n(t)\|_{H^1(\Omega)}^2 -
C_3\big(\|u_\kappa^n(t)\|_{L^2(\Omega)}^2+ \|\vp_\kappa^n(t)\|_{L^2(\Gamma)}^2 +  \|v_\kappa^n(t)\|_{L^2(\Gamma)}^2 +1\big),
$$
where the positive constants $C_1,C_2,C_3$ depend  on $D$, $A$, $\delta$, $\Omega$ and $\Gamma$, but are independent of $n$ and $\kappa$.
In view of \eqref{ues2a}, there again exists some $C_4>0$ such that $$\widetilde{\mathcal{Y}}_\kappa^n(t)=\mathcal{Y}_\kappa^n(t)+C_4\geq 0,\qquad \forall\, t\geq 0.$$
Moreover, it follows from \eqref{dtue} that
\begin{align}
\frac{\mathrm{d}}{\mathrm{d}t} \widetilde{\mathcal{Y}}_\kappa^n(t)+ \|\partial_t u_\kappa^n\|_{L^2(\Omega)}^2
& \leq C  \widetilde{\mathcal{Y}}_\kappa^n(t) + C \big(\|\partial_t \vp_\kappa^n\|_{L^2(\Gamma)} ^2 +   \|\partial_t v_\kappa^n\|_{L^2(\Gamma)}^2 +1\big).
\label{dtue1}
\end{align}
By an argument similar to that for \eqref{es-unitime2}, \eqref{es-unitime3} and \eqref{es-unitime4}, we eventually obtain
\begin{align}
\|u_\kappa^n(t)\|_{H^1(\Omega)}^2 +\int_t^{t+1} \|\partial_t u_\kappa^n(\tau)\|_{L^2(\Omega)}^2\,\mathrm{d}\tau \leq C,\qquad \forall\,t\geq 0,\label{es-unitime5}
\end{align}
where $C>0$ depends on $\|u_0\|_{H^1(\Omega)}$, $\|\varphi_0\|_{H^2(\Gamma)}$, $\|v_0\|_{H^1(\Gamma)}$, $\|\widetilde{\mu}_0\|_{H^1(\Gamma)}$, $\langle\varphi_0\rangle_\Gamma$, $\int_\Gamma F(\varphi_0)\,\mathrm{d}S$, $\Omega$, $\Gamma$ and the structural coefficients, but not on $n$, $\kappa$ and $t$.

Combining the above uniform estimates, we are able to pass to the limit and obtain a unique global weak solution to problem \eqref{1.a}--\eqref{ini2} on the whole interval $[0,+\infty)$ with corresponding regularity properties. In particular, since
$\mu,\, \vp,\, v\in L^\infty(0,+\infty; H^1(\Gamma))$, then by Lemma \ref{lem:sep}, we can establish the uniform strict separation property
\begin{align}
\|\vp(t)\|_{L^\infty(\Gamma)}\leq 1-\sigma\quad \text{for almost all}\ t\geq 0,
\label{vp-sep4}
\end{align}
for some constant $\sigma\in (0,\sigma_0]$ independent of time. This together with the surface elliptic theory for $\vp$ leads to $\vp \in L^\infty(0,+\infty;H^3(\Gamma))$ as well as $\vp \in L^2(0,T; H^4(\Gamma))$ for any $T>0$. By interpolation, we again have  $\vp \in C([0,+\infty);C(\Gamma))$ so that the conclusion \eqref{vp-sep2} holds. Since we can verify that $q\in L^2(0,T; H^{1/2}(\Gamma))$, then by the elliptic estimate for $u$ (see e.g., \cite{GT01,La}), we also find
$$
\|u\|_{L^2(0,T;H^2(\Omega))}\leq C \|\partial_t u\|_{L^2(0,T;L^2(\Omega))}+ C\|q\|_{L^2(0,T;H^\frac12(\Gamma))}
\leq C(T),
$$
for any $T>0$.

The proof of Proposition \ref{pro:str}-(2) is complete.
\hfill $\square$

\subsubsection{Proof of Theorem \ref{thm:reg}}
We first consider the case with a general mass exchange term $q$ that is globally Lipschitz continuous. Let $(u,\vp,v,\mu,\eta)$ be the unique global weak solution to problem  \eqref{1.a}--\eqref{ini2} on $[0,T]$ as determined in Theorem \ref{thm:weak}-(1) and (2), subject to the initial data $(u_0,\vp_0,v_0)$.
Thanks to the energy identity \eqref{BEL6}, for any given $t_0\in (0,T)$,
we have
\begin{align*}
 & \int_0^{t_0} \big(\|u(\tau)\|_{H^1(\Omega)}^2+ \| \mu(\tau)\|_{H^1(\Gamma)}^2
 + \| v (\tau)\|_{H^1(\Gamma)}^2
 + \| \vp(\tau)\|_{H^2(\Gamma)}^2\big)\,\mathrm{d}\tau  \leq C_1,
 \end{align*}
where the constant $C_1>0$ depends on $\|u_0\|_{L^2(\Omega)}$, $\|\varphi_0\|_{H^1(\Gamma)}$, $\|v_0\|_{L^2(\Gamma)}$, $\langle\varphi_0\rangle_\Gamma$, $\int_\Gamma F(\varphi_0)\,\mathrm{d}S$, $\Omega$, $\Gamma$, $T$ and coefficients of the system. Then there must exist $t_1\in (0,t_0)$ such that
$$
\|u(t_1)\|_{H^1(\Omega)}^2 + \| \mu(t_1)\|_{H^1(\Gamma)}^2
 + \| v (t_1)\|_{H^1(\Gamma)}^2
 + \| \vp(t_1)\|_{H^2(\Gamma)}^2 \leq \frac{2C_1}{t_0}.
$$
Hence, taking $(u(t_1),\vp(t_1),v(t_1),\mu(t_1))$ as the initial datum, we can find a unique solution $(\widetilde{u},\widetilde{\vp},\widetilde{v},\widetilde{\mu},\widetilde{\eta})$ on $[t_1,T]$ with better regularity properties as stated in Proposition \ref{pro:str}-(1). Moreover, $\widetilde{\vp}$ satisfies the strict separation property on $[t_1,T]$. This solution indeed coincides with $(u,\vp,v,\mu,\eta)$ on $[t_1,T]$ thanks to the uniqueness of weak solutions.

Next, in the equilibrium case with $q$ given by \eqref{q1}, we use the dissipative energy identity \eqref{BELweak} instead.
For any $t_0\in(0,1]$,
\begin{align*}
 & \int_0^{t_0} \big(\| u(\tau)\|_{H^1(\Omega)}^2+ \| \mu(\tau)\|_{H^1(\Gamma)}^2
 + \| v (\tau)\|_{H^1(\Gamma)}^2
 + \| \vp(\tau)\|_{H^2(\Gamma)}^2\big)\,\mathrm{d}\tau  \leq C_2,
 \end{align*}
where the constant $C_2>0$ depends on $\|u_0\|_{L^2(\Omega)}$, $\|\varphi_0\|_{H^1(\Gamma)}$, $\|v_0\|_{L^2(\Gamma)}$, $\langle\varphi_0\rangle_\Gamma$, $\int_\Gamma F(\varphi_0)\,\mathrm{d}S$, $\Omega$, $\Gamma$ and coefficients of the system. There again exists some $t_1\in (0,t_0)$ such that
$$
\| u(t_1)\|_{H^1(\Omega)}^2+ \| \mu(t_1)\|_{H^1(\Gamma)}^2
 + \| v (t_1)\|_{H^1(\Gamma)}^2
 + \| \vp(t_1)\|_{H^2(\Gamma)}^2 \leq \frac{2C_2}{t_0}.
$$
Thus,
$(u(t_1),\vp(t_1),v(t_1),\mu(t_1))$ satisfies the additional assumptions of Proposition \ref{pro:str}-(2). The conclusion of Theorem \ref{thm:reg}-(2) follows from a similar argument with the help of those uniform-in-time estimates and the uniqueness of weak solution.

The proof of Theorem \ref{thm:reg} is complete.
\hfill $\square$

\section{Long-time Behavior of the Full Bulk-Surface Coupled System: the Equilibrium Case}
\setcounter{equation}{0}
\label{sec:LTBS}

In this section, we investigate the long-time behavior of global weak solutions to problem \eqref{1.a}--\eqref{ini2} in the equilibrium case \eqref{q1} with an asymptotically
autonomous mass exchange coefficient $A(t)$.

\subsection{The $\omega$-limit set}

For any given initial datum $(u_0,\varphi_0,v_0)$ satisfying \eqref{cini}, we denote the  corresponding unique global weak solution to problem \eqref{1.a}--\eqref{ini2}
 by  $(u,\varphi,\mu,v,\eta)$. Under the assumptions of Theorem \ref{thm:conv}, we find that $(u,\varphi,\mu,v,\eta)$ is well defined on the whole interval $[0,+\infty)$ and satisfies the conclusion of Theorem \ref{thm:reg}-(2).

The $\omega$-limit set of $(u_0,\varphi_0,v_0)$ is defined as follows
\begin{align}
& \omega(u_0,\varphi_0,v_0)\label{ome}\\
& \quad =\big\{(u_\infty,\varphi_\infty,v_\infty)\in H^1(\Omega)\times H^3(\Gamma)\times H^1(\Gamma)\ \big|\ \exists\,\{t_n\}\nearrow +\infty \ \text{such that}\ \non \\
& \qquad \quad  (u(t_n),\varphi(t_n),v(t_n))\to (u_\infty,\varphi_\infty,v_\infty)\ \text{in}\ L^2(\Omega)\times H^2(\Gamma)\times L^2(\Gamma)\ \text{as}\ t_n\to +\infty\big\}.\non
\end{align}

\begin{proposition}[Characterization of the $\omega$-limit set]
\label{pro:ome}
Suppose that the assumptions for Theorem \ref{thm:reg}-(2) are satisfied. Besides, we assume $A$ is asymptotically autonomous such that
\begin{align}
\lim_{t\to+\infty}A(t)=A_\infty,\label{conA}
\end{align}
with some constant $A_\infty\geq 0$.
Then for any initial datum $(u_0,\varphi_0,v_0)$ satisfying \eqref{cini}, we have
\begin{itemize}
\item[(1)] the $\omega$-limit set $\omega(u_0,\varphi_0,v_0)$ is  nonempty, bounded in $H^1(\Omega)\times H^3(\Gamma)\times H^1(\Gamma)$ and thus compact in $L^2(\Omega)\times H^2(\Gamma)\times L^2(\Gamma)$; there exists some constant  $\sigma_\infty\in (0,1)$ such that for every $(u_\infty,\varphi_\infty,v_\infty)\in \omega(u_0,\varphi_0,v_0)$ its component $\vp_\infty$ satisfies  $\|\vp_\infty\|_{C(\Gamma)}\leq 1-\sigma_\infty$;
\item[(2)] the total free energy $\mathcal{E}(u,\vp,v)$ is a constant on $\omega(u_0,\varphi_0,v_0)$;
\item[(3)] every
element $(u_\infty,\varphi_\infty,v_\infty)\in \omega(u_0,\varphi_0,v_0)$ satisfies the stationary problem \eqref{sta1}--\eqref{sta5}, where $u_\infty, \mu_\infty,\eta_\infty$ are three constants.
\end{itemize}
\end{proposition}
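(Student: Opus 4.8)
The plan is to establish the three assertions separately, using the uniform‑in‑time bounds and the strict separation property of Theorem~\ref{thm:reg}-(2) together with the dissipative energy equality \eqref{BELweak}.

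For (1), I would start from the bound $\|u(t)\|_{H^1(\Omega)}+\|\varphi(t)\|_{H^3(\Gamma)}+\|v(t)\|_{H^1(\Gamma)}\le C$, valid for $t\ge t_0>0$. Given any sequence $t_n\nearrow+\infty$, boundedness in $H^1(\Omega)\times H^3(\Gamma)\times H^1(\Gamma)$ and the compact embeddings $H^1(\Omega)\hookrightarrow\hookrightarrow L^2(\Omega)$, $H^3(\Gamma)\hookrightarrow\hookrightarrow H^2(\Gamma)$, $H^1(\Gamma)\hookrightarrow\hookrightarrow L^2(\Gamma)$ produce a subsequence converging in $L^2(\Omega)\times H^2(\Gamma)\times L^2(\Gamma)$, so $\omega(u_0,\varphi_0,v_0)\neq\emptyset$; weak lower semicontinuity of the stronger norms shows $\omega$ is bounded in $H^1(\Omega)\times H^3(\Gamma)\times H^1(\Gamma)$, hence precompact, and a diagonal argument gives closedness, so $\omega$ is compact in $L^2\times H^2\times L^2$. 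Since $H^2(\Gamma)\hookrightarrow C(\Gamma)$, passing to the limit in $\|\varphi(t_n)\|_{C(\Gamma)}\le 1-\sigma_{t_0}$ yields $\|\varphi_\infty\|_{C(\Gamma)}\le 1-\sigma_\infty$ with $\sigma_\infty:=\sigma_{t_0}$.

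For (2), \eqref{BELweak} with $A\ge 0$ shows $t\mapsto\mathcal{E}(u(t),\varphi(t),v(t))$ is non‑increasing, and it is bounded below by \textbf{(H1)} (so $W\ge -C$ on $[-1,1]$) together with nonnegativity of the remaining terms in $\mathcal{E}$; hence it converges to some finite $\mathcal{E}_\infty$. For $(u_\infty,\varphi_\infty,v_\infty)\in\omega$ realized by $t_n$, I would use continuity of $\mathcal{E}$ along this convergence: $\tfrac12\|u\|_{L^2(\Omega)}^2$ is continuous in $L^2(\Omega)$; $\tfrac12\|\nabla_\Gamma\varphi\|_{L^2(\Gamma)}^2$ and $\tfrac{2}{\delta}\|v-\tfrac{1+\varphi}{2}\|_{L^2(\Gamma)}^2$ are continuous in $H^2(\Gamma)\times L^2(\Gamma)$; and $\int_\Gamma W(\varphi)$ is continuous because $\varphi(t_n)$ and $\varphi_\infty$ all lie in a fixed compact interval $[-1+\sigma_\infty,1-\sigma_\infty]$ on which $W$ is bounded and Lipschitz, with $\varphi(t_n)\to\varphi_\infty$ in $C(\Gamma)$. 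Thus $\mathcal{E}(u_\infty,\varphi_\infty,v_\infty)=\mathcal{E}_\infty$ on all of $\omega$. For (3), fix $(u_\infty,\varphi_\infty,v_\infty)\in\omega$ with $t_n\nearrow+\infty$ realizing it and consider the time‑shifted solutions $u^n(\cdot):=u(t_n+\cdot)$, $\varphi^n,v^n,\mu^n,\eta^n$ on a fixed window $s\in[-1,1]$, which solve the same system with $A$ replaced by $A_n(s):=A(t_n+s)$. The uniform‑in‑time bounds and the time‑derivative estimates from the proof of Proposition~\ref{pro:str}-(2), applied from a positive time on, allow extraction via the Aubin–Lions–Simon lemma of a subsequence converging to $(u^*,\varphi^*,v^*,\mu^*,\eta^*)$, strongly enough (e.g.\ $\varphi^n\to\varphi^*$ in $C([-1,1];H^2(\Gamma))$) that $W'(\varphi^n)\to W'(\varphi^*)$ by the separation property. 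Since $\int_0^\infty\big(D\|\nabla u\|^2+\|\nabla_\Gamma\mu\|^2+\|\nabla_\Gamma\eta\|^2\big)\,d\tau<+\infty$ by \eqref{BELweak}, the integrals over $(t_n-1,t_n+1)$ tend to zero, forcing $\nabla u^*=\nabla_\Gamma\mu^*=\nabla_\Gamma\eta^*=0$, so $u^*,\mu^*,\eta^*$ are spatially constant; then $\partial_t\varphi^*=\Delta_\Gamma\mu^*=0$ and $\partial_tu^*=D\Delta u^*=0$, whence by mass conservation $\int_\Gamma v^*$ is constant in time, $\int_\Gamma q^*=0$, and since $q^*=-A_\infty(\eta^*-u^*)$ (using \eqref{conA}) is a constant, $q^*\equiv0$ and $\partial_tv^*=\Delta_\Gamma\eta^*+q^*=0$. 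Hence $u^*,\varphi^*,v^*$ are time‑independent; evaluating at $s=0$ identifies them with $u_\infty,\varphi_\infty,v_\infty$, and setting $\mu_\infty:=\mu^*$, $\eta_\infty:=\eta^*$ and passing to the limit in \eqref{w1.d}, \eqref{w1.f}, in the mass identities \eqref{mass1}--\eqref{mass2}, and in $\Delta_\Gamma\eta^*=0$ gives \eqref{sta1}--\eqref{sta5}; elliptic regularity for \eqref{sta1} under the separation property yields $\varphi_\infty\in H^3(\Gamma)$ and then $v_\infty\in H^2(\Gamma)$ from \eqref{sta4}.

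The main obstacle will be the limiting procedure in~(3): securing enough compactness for the time‑shifted sequences — in particular strong convergence of $\varphi^n$ so the singular nonlinearity $W'(\varphi^n)$ passes to the limit — and combining the vanishing of the dissipation integral with the asymptotic autonomy of $A$ and with mass conservation to conclude that the limit profile is a genuine (constant‑in‑time) stationary state rather than merely a slowly varying one.
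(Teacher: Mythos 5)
Your proposal is correct and follows essentially the same route as the paper: parts (1) and (2) are identical (uniform $H^1(\Omega)\times H^3(\Gamma)\times H^1(\Gamma)$ bounds plus compact embeddings, and monotonicity of $\mathcal{E}$ from \eqref{BELweak}), and part (3) rests on the same two ingredients the paper uses, namely the vanishing of the dissipation integrals over unit time windows and the uniform regularity and separation supplied by Theorem \ref{thm:reg}-(2). The only difference is presentational: you package part (3) as a compactness argument for the translated trajectories $t\mapsto (u,\varphi,v)(t_n+t)$ on a fixed window, whereas the paper propagates the convergence at $t=t_n$ to all of $[t_n,t_n+1]$ by estimating $\|u(t_n+\tau)-u(t_n+\tau')\|_{(H^1(\Omega))'}$ (and similarly for $\varphi$, $v$) from the vanishing $L^2$-in-time norms of the time derivatives and then interpolating with the uniform higher-order bounds --- both yield the same stationary limit relations \eqref{sta1}--\eqref{sta5}.
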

\begin{proof}
From the uniform estimate \eqref{unit-es1}, the compact embeddings $H^1(\Omega)\hookrightarrow\hookrightarrow L^2(\Omega)$, $H^k(\Gamma)\hookrightarrow\hookrightarrow H^{k-1}(\Gamma)$, $k=1,3$, we find that for any unbounded increasing sequence $\{t_n\}$, there exists a subsequence (not relabelled for simplicity) such that as $t_n\to +\infty$,
\begin{equation}
(u(t_n),\varphi(t_n),v(t_n))\to (u_\infty,\varphi_\infty,v_\infty)\ \ \ \text{strongly in}\ \ \  L^2(\Omega)\times H^2(\Gamma)\times L^2(\Gamma), \label{cc1}
\end{equation}
with its limit $(u_\infty,\varphi_\infty,v_\infty)\in H^1(\Omega)\times H^3(\Gamma)\times H^1(\Gamma)$. This easily yields that $ \omega(u_0,\varphi_0,v_0)$ is a nonempty bounded subset in $H^1(\Omega)\times H^3(\Gamma)\times H^1(\Gamma)$.
 The compactness of $\omega(u_0,\varphi_0,v_0)$ follows from the compact embedding $H^1(\Omega)\times H^3(\Gamma)\times H^1(\Gamma) \hookrightarrow\hookrightarrow L^2(\Omega)\times H^2(\Gamma)\times L^2(\Gamma)$ and the fact that it is indeed closed (cf. \cite{T} for more general results). Thanks to the strict separation property of the solution $\vp$ (for $t\geq 1$) and the Sobolev embedding theorem $H^2(\Gamma)\hookrightarrow C(\Gamma)$, there exists some $\sigma_\infty\in (0,1)$ such that $\|\vp_\infty\|_{C(\Gamma)}\leq 1-\sigma_\infty$ for all limit functions $\vp_\infty$.

Next, from the energy identity \eqref{BELweak}, we infer that the total free energy $\mathcal{E}$ is continuous and non-increasing in time. Since $\mathcal{E}$ is bounded from below (by a constant that only depends on $\Gamma$), we find that there exists some $\mathcal{E}_\infty \in \mathbb{R}$ and
\begin{align}
\lim_{t\to +\infty}\mathcal{E}(u(t),\vp(t),v(t))=\mathcal{E}_\infty.\label{decayE}
\end{align}
Hence, $\mathcal{E}$ must be a constant on $\omega(u_0,\varphi_0,v_0)$.

Given any $(u_\infty,\varphi_\infty,v_\infty)\in \omega(u_0,\varphi_0,v_0)$, we consider the unbounded increasing sequence $\{t_n\} \nearrow +\infty$ such that \eqref{cc1} holds.
Thanks to the assumption \textbf{(H5)} and \eqref{unit-es1}, after extracting a subsequence (not relabelled for simplicity), we have
\begin{equation}
(\mu(t_n),\eta(t_n))\to (\mu_\infty,\eta_\infty)\ \ \ \text{strongly in}\ \ \  L^2(\Gamma)\times L^2(\Gamma),
 \label{cc2}
\end{equation}
with some $(\mu_\infty,\eta_\infty)\in H^1(\Gamma)\times H^1(\Gamma)$.
From \eqref{1.f}, we easily have
\begin{align}
\eta_\infty= \frac{4}{\delta}\left(v_\infty-\frac{1+\varphi_\infty}{2}\right).
\label{inf-eta}
\end{align}
Thanks to the strict separation property of $\vp(t)$ and $\vp_\infty$, we can take the limit as $t_n\to +\infty$ in \eqref{1.d} to get
\begin{align}
& \mu_\infty = -\Delta_\Gamma \vp_\infty + W'(\vp_\infty)-\frac12\eta_\infty.\label{inf-mu}
\end{align}
Without loss of generality, we assume $t_{n+1}\geq t_n+1$ for all $n\in \mathbb{Z}^+$. In analogy to \eqref{BELweak}, we have the energy identity on $[t_n,t_{n+1}]$:
\begin{align}
 & \mathcal{E}(u(t_{n+1}),\varphi(t_{n+1}),v(t_{n+1}))
 + D\int_{t_n}^{t_{n+1}}\!\int_\Omega |\nabla u(\tau)|^2\,\mathrm{d}x \mathrm{d}\tau \notag\\
 &\qquad
 + \int_{t_n}^{t_{n+1}}\! \int_\Gamma \big(|\nabla_\Gamma \mu(\tau)|^2+|\nabla_\Gamma \eta(\tau)|^2\big)\,\mathrm{d}S\mathrm{d}\tau
 +\int_{t_n}^{t_{n+1}}\! A(\tau)\int_\Gamma |(\eta-u)(\tau)|^2 \,\mathrm{d}S\mathrm{d}\tau  \nonumber\\
 &\quad  =\mathcal{E}(u(t_n),\varphi(t_n),v(t_n)).
\label{BELweakn}
 \end{align}
It follows from \eqref{decayE} and \eqref{BELweakn} that
 \begin{align}
 & \int_{0}^{1}\!\int_\Omega |\nabla u(t_n+\tau)|^2\,\mathrm{d}x \mathrm{d}\tau
 + \int_{0}^{1}\! \int_\Gamma \big(|\nabla_\Gamma \mu(t_n+\tau)|^2+|\nabla_\Gamma \eta(t_n+\tau)|^2\big)\,\mathrm{d}S\mathrm{d}\tau\non\\
 &\qquad +\int_{0}^{1}\! A(t_n+\tau)\int_\Gamma |(\eta-u)(t_n+\tau)|^2 \,\mathrm{d}S\mathrm{d}\tau \non\\
 &\quad \leq \int_{t_n}^{t_{n+1}}\!\int_\Omega |\nabla u(\tau)|^2\,\mathrm{d}x \mathrm{d}\tau
 + \int_{t_n}^{t_{n+1}}\! \int_\Gamma \big(|\nabla_\Gamma \mu(\tau)|^2+|\nabla_\Gamma \eta(\tau)|^2\big)\,\mathrm{d}S\mathrm{d}\tau\non\\
&\qquad
 +\int_{t_n}^{t_{n+1}}\! A(\tau)\int_\Gamma |(\eta-u)(\tau)|^2 \,\mathrm{d}S\mathrm{d}\tau\non\\
 &\quad \to 0,\qquad \text{as}\ t_n\to +\infty.
 \label{BEL-de}
 \end{align}
On the other hand,  from \eqref{1.a}, \eqref{1.c} and \eqref{1.e} we can deduce the following estimates on time derivatives
\begin{align}
& \|\partial_t u\|_{(H^1(\Omega))'}\leq C\big(\|\nabla  u\|_{\bm{L}^2(\Omega)} + \|q\|_{L^2(\Gamma)}\big), \label{vu}\\
& \|\partial_t \vp\|_{(H^1(\Gamma))'}\leq \|\nabla_\Gamma \mu\|_{\bm{L}^2(\Gamma)}, \label{vpt}\\
& \|\partial_t v\|_{(H^1(\Gamma))'}\leq \|\nabla_\Gamma \eta\|_{\bm{L}^2(\Gamma)} + \|q\|_{L^2(\Gamma)}, \label{vt}
\end{align}
which combined with \eqref{BEL-de}, \eqref{q1} and the assumption \textbf{(H5)} yield (cf. \cite{HT01})
\begin{align}
& \int_0^1 \big( \|\partial_t u(t_n+\tau)\|_{(H^1(\Omega))'}^2 + \|\partial_t \vp(t_n+\tau)\|_{(H^1(\Gamma))'}^2\big)\,\mathrm{d}\tau \non\\
&\qquad + \int_0^1 \|\partial_t v(t_n+\tau)\|_{(H^1(\Gamma))'}^2  \,\mathrm{d}\tau\to 0, \qquad \text{as}\ t_n\to +\infty.
\label{conv-pvt}
\end{align}
As a consequence, it holds
\begin{align}
& \|u(t_n+\tau)-u(t_n+\tau')\|_{(H^1(\Omega))'}\to 0,
\label{conv-uni1}\\
& \|\vp(t_n+\tau)-\vp(t_n+\tau')\|_{(H^1(\Gamma))'}\to 0,\qquad
\|v(t_n+\tau)-v(t_n+\tau')\|_{(H^1(\Gamma))'} \to 0,
\label{conv-uni}
\end{align}
uniformly for all $\tau, \tau'\in [0,1]$.

From \eqref{conv-uni1}, \eqref{conv-uni}, the uniform boundedness of $(u(t),\vp(t),v(t))$ in $H^1(\Omega)\times H^3(\Gamma)\times H^1(\Gamma)$ (for $t\geq 1$) and the sequential strong convergence of $(u(t_n),\vp(t_n),v(t_n))$ in $L^2(\Omega)\times H^2(\Gamma)\times L^2(\Gamma)$, we obtain for all $\tau \in [0,1]$,
\begin{align*}
& \lim_{t_n\to +\infty} \big(\|u(t_n+\tau)-u_\infty\|_{L^2(\Omega)}+ \|\vp(t_n+\tau)-\vp_\infty\|_{H^2(\Gamma)}+ \|v(t_n+\tau)-v_\infty\|_{L^2(\Gamma)}\big)=0,
\end{align*}
which together with \eqref{1.f} and \eqref{inf-eta} also yields
$$
\lim_{t_n\to +\infty}\|\eta(t_n+\tau)-\eta_\infty\|_{L^2(\Gamma)}=0,\qquad \forall\, \tau \in [0,1].
$$
Besides, thanks to the strict separation property of $\vp(t)$ and $\vp_\infty$, we deduce from \eqref{1.d} and \eqref{inf-mu} that
\begin{align}
& \|\mu(t_n+\tau)-\mu_\infty\|_{L^2(\Gamma)}\non\\
&\quad=  \left\|-\Delta_\Gamma(\vp(t_n+\tau)-\vp_\infty)
+\big[W'(\vp(t_n+\tau))-W'(\vp_\infty)\big] -\frac12(\eta(t_n+\tau)-\eta_\infty)\right\|_{L^2(\Gamma)}\non\\
&\quad \leq C\|\vp(t_n+\tau)-\vp_\infty\|_{H^2(\Gamma)}+\frac12\|\eta(t_n+\tau)-\eta_\infty\|_{L^2(\Gamma)}
\to 0, \qquad \text{as}\ \ t_n\to +\infty,\non
\end{align}
for all $\tau \in [0,1]$. Hence, for any $\zeta\in L^2(\Gamma)$, using Lebesgue's dominated convergence theorem, Poincar\'{e} inequality and \eqref{BEL-de}, we find
\begin{align*}
& \left|\int_\Gamma \Big(-\Delta_\Gamma \vp_\infty + W'(\vp_\infty)-\frac{1}{|\Gamma|}\int_\Gamma W'(\vp_\infty)\,\mathrm{d}S\Big)\zeta\,\mathrm{d}S\right|\\
&\quad = \lim_{t_n\to +\infty}\left| \int_0^1 \int_\Gamma \Big[-\Delta_\Gamma \vp(t_n+\tau) + W'(\vp(t_n+\tau))-\frac{1}{|\Gamma|}\int_\Gamma W'(\vp(t_n+\tau))\,\mathrm{d}S\Big]\zeta\,\mathrm{d}S\mathrm{d}\tau\right|\\
&\quad = \lim_{t_n\to +\infty}\left| \int_0^1 \int_\Gamma \Big[ \big(\mu(t_n+\tau)- \langle \mu(t_n+\tau)\rangle_\Gamma\big) + \frac12\big(\eta(t_n+\tau)- \langle \eta(t_n+\tau)\rangle_\Gamma\big) \Big]\zeta\,\mathrm{d}S\mathrm{d}\tau\right|\\
&\quad \leq C\lim_{t_n\to +\infty}  \left(\int_0^1\|\nabla_\Gamma \mu(t_n+\tau)\|_{\bm{L}^2(\Gamma)}^2\,\mathrm{d}\tau\right)^\frac12 \|\zeta\|_{L^2(\Gamma)}\\
&\qquad + C\lim_{t_n\to +\infty}  \left(\int_0^1\|\nabla_\Gamma \eta(t_n+\tau)\|_{\bm{L}^2(\Gamma)}^2\,\mathrm{d}\tau\right)^\frac12 \|\zeta\|_{L^2(\Gamma)}\\
&\quad =0,
\end{align*}
which implies that  $\vp_\infty\in H^3(\Gamma)$ is a strong solution to the nonlocal elliptic equation \eqref{sta1} (the fact $\langle\vp_\infty\rangle_\Gamma= \langle\vp_0\rangle_\Gamma$ easily follows from the mass conservation).

Next, the pointwise convergence of $v(t_n+\tau)$ in $\Gamma\times(0,1)$ together with its uniform boundedness in $L^2(0,1;H^1(\Gamma))$ implies the weak convergence
$$
\lim_{t_n\to +\infty} \int_0^1\!\int_\Gamma \big(v(t_n+\tau)- v_\infty\big)\zeta\,\mathrm{d}S\mathrm{d}\tau=0,\qquad \forall\, \zeta\in L^2(0,1;(H^1(\Gamma))').
$$
Then
for any $\zeta\in H^1(\Omega)$, we have
\begin{align*}
& \left|\int_\Gamma
\Big(-\frac{4}{\delta}\nabla_\Gamma v_\infty + \frac{2}{\delta}\nabla_\Gamma \vp_\infty \Big)\cdot  \nabla_\Gamma \zeta \,\mathrm{d}S\right|\\
&\quad =
\lim_{t_n\to +\infty}
\left|\int_0^1 \int_\Gamma
 \Big(-\frac{4}{\delta} \nabla_\Gamma v(t_n+\tau) + \frac{2}{\delta}  \nabla_\Gamma \vp(t_n+\tau) \Big) \cdot \nabla_\Gamma \zeta  \,\mathrm{d}S\mathrm{d}\tau\right|\\
&\quad =
\lim_{t_n\to +\infty}
\left|\int_0^1 \int_\Gamma
\nabla_\Gamma\eta(t_n+\tau) \cdot  \nabla_\Gamma \zeta
  \,\mathrm{d}S\mathrm{d}\tau\right|\\
&\quad \leq \lim_{t_n\to +\infty}  \left(\int_0^1\|\nabla_\Gamma \eta(t_n+\tau)\|_{\bm{L}^2(\Gamma)}^2\,\mathrm{d}\tau\right)^\frac12 \|\nabla\zeta\|_{\bm{L}^2(\Gamma)}\\
&\quad =0.
\end{align*}
 Hence, $v_\infty\in H^1(\Gamma)$ is a weak solution to the elliptic equation \eqref{sta4}. Then the surface elliptic theory implies $v_\infty \in H^2(\Gamma)$. In a similar manner, we obtain
 $$
 \int_\Gamma \nabla_\Gamma\eta_\infty\cdot \nabla_\Gamma \zeta\,\mathrm{d}S = \int_\Gamma \nabla_\Gamma\mu_\infty\cdot \nabla_\Gamma\zeta\,\mathrm{d}S = 0,\qquad \forall\,\zeta\in H^1(\Omega),
 $$
 so that both $\eta_\infty$ and $\mu_\infty$ are constants. Integrating \eqref{1.d} and \eqref{1.f} over $\Gamma$ respectively and taking $t_n\to +\infty$ (up to a subsequence), we thus arrive at \eqref{sta3} and \eqref{sta5}. Finally, by \eqref{conA} and \eqref{BEL-de}, we can conclude $A_\infty(u_\infty-\eta_\infty)=0$.

The proof of Proposition \ref{pro:ome} is complete.
\end{proof}
\begin{remark}\label{rem:ccov}
Proposition \ref{pro:ome} gives a \emph{sequential convergence} result for every global weak solution $(u,\vp,v,\mu,\eta)$ to problem \eqref{1.a}--\eqref{ini2} as $t\to+\infty$. However, the structure of $\omega(u_0,\varphi_0,v_0)$ can be rather complicated in general (due to the non-convexity of the free energy), which prevents us to conclude the \emph{uniqueness of asymptotic limit}.

Indeed, according to \eqref{sta2} and \eqref{sta5}, we can determine the constant $u_\infty$ and $\eta_\infty$ by the integral $\int_\Gamma v_\infty \,\mathrm{d}S$. Next, from \eqref{sta4}, the function $v_\infty$ can be determined by $\vp_\infty$ and its integral $\int_\Gamma v_\infty \,\mathrm{d}S$, while the constant $\mu_\infty$ can be determined by $\vp_\infty$ and $\int_\Gamma v_\infty \,\mathrm{d}S$ via \eqref{sta3}. Hence, the key issue is to fix $\vp_\infty$ and  $\int_\Gamma v_\infty \,\mathrm{d}S$. If we assume in addition that $A_\infty>0$, then from the identity $A_\infty(u_\infty-\eta_\infty)=0$ we can obtain an additional constraint $u_\infty=\eta_\infty$. This relation together with \eqref{sta2} and \eqref{sta5} enables us to uniquely determine $\int_\Gamma v_\infty \,\mathrm{d}S$ by the following equation:
$$
\frac{M}{|\Omega|}-\frac{1}{|\Omega|} \int_\Gamma v_\infty \,\mathrm{d}S = \frac{4}{\delta|\Gamma|}\int_\Gamma v_\infty \,\mathrm{d}S -\frac{2}{\delta}-\frac{2m}{\delta|\Gamma|},
$$
where
$$
M:=\int_\Omega u_0 \,\mathrm{d} x+ \int_\Gamma v_0\,\mathrm{d}S,\qquad  m:=  \int_\Gamma \vp_0\,\mathrm{d}S.
$$
Nevertheless, since the double well potential $W$ can be non-convex (recall the assumption \textbf{(H1)}), we cannot expect the uniqueness of solution to the nonlinear nonlocal elliptic equation \eqref{sta1} (see e.g., \cite{AW07,Mi19,RH99} for a similar situation arising in the study of the classical Cahn-Hilliard equation in a Cartesian domain $\Omega\subset \mathbb{R}^d$ with $d\in\{2,3\}$). For further discussions on the connection with stationary points of the Cahn-Hilliard equation, we refer to \cite[Section 3.3]{GKRR16}.
\end{remark}

\subsection{Convergence to equilibrium}

Under additional assumptions that the potential function $W$ is real analytic and the mass exchange coefficient $A$ decays fast enough, we proceed to show that the $\omega$-limit set $\omega(u_0,\varphi_0,v_0)$ consists of a single point, namely, every bounded global weak solution converges to a single equilibrium as $t\to +\infty$.

\subsubsection{An auxiliary energy inequality}

Let $(u,\varphi,\mu,v,\eta)$ be the unique global weak solution to problem \eqref{1.a}--\eqref{ini2} corresponding to the initial datum $(u_0,\varphi_0,v_0)$ satisfying \eqref{cini}.

Integrating \eqref{1.a} over $\Omega$ and \eqref{1.e} over $\Gamma$, respectively, using \eqref{q1}, we obtain
\begin{align}
&\frac{\mathrm{d}}{\mathrm{d}t}\int_\Omega u\,\mathrm{d}x= A(t)\int_\Gamma (\eta-u)\,\mathrm{d}S,\label{mau}\\
&\frac{\mathrm{d}}{\mathrm{d}t}\int_\Gamma v\,\mathrm{d}S= -A(t)\int_\Gamma (\eta-u)\,\mathrm{d}S,\label{mav}
\end{align}
for almost all $t>0$. From \eqref{1.f}, \eqref{mav} and the mass conservation
\begin{align}
\int_\Gamma \vp(t)\,\mathrm{d}S=\int_\Gamma \vp_0\,\mathrm{d}S,\qquad \forall\, t\geq 0,
\label{mavp}
\end{align}
we also have
\begin{align}
\frac{\mathrm{d}}{\mathrm{d}t}\int_\Gamma \eta\,\mathrm{d}S= -\frac{4}{\delta}A(t)\int_\Gamma (\eta-u)\,\mathrm{d}S.\label{mae}
\end{align}

Thanks to the regularity result obtained in Theorem \ref{thm:reg}-(2), we infer from the original system \eqref{1.a}--\eqref{1.f} and \eqref{mau}, \eqref{mae} that
\begin{subequations}
	\begin{alignat}{3}
	& \partial_t (u-\langle u\rangle_\Omega) =D\Delta u - \frac{A(t)}{|\Omega|}\int_\Gamma (\eta-u)\,\mathrm{d}S, \label{m1.a}\\
    & D\partial_{\bm{n}} (u-\langle u\rangle_\Omega)=A(t)(\eta-u), \label{m1.b}\\
    & \frac{\delta}{4}\partial_t (\eta-\langle\eta\rangle_\Gamma)+\frac12\partial_t\vp =\Delta_\Gamma \eta - A(t)(\eta-u) +\frac{A(t)}{|\Gamma|}\int_\Gamma (\eta-u)\,\mathrm{d}S,
     \label{m1.e}
	\end{alignat}
\end{subequations}
where \eqref{m1.a} holds a.e. in $\Omega\times (0,+\infty)$, \eqref{m1.b} and \eqref{m1.e} holds a.e. on $\Gamma\times (0,+\infty)$.

Let us define the auxiliary energy functional
\begin{align}
\widetilde{\mathcal{E}}(u(t),\vp(t),\eta(t))
& = \frac12\|(u-\langle u\rangle_\Omega)(t)\|_{L^2(\Omega)}^2
+  \frac12\|\nabla_\Gamma \vp(t)\|_{L^2(\Gamma)}^2  \non\\
&\quad + \int_\Gamma W(\vp(t))\,\mathrm{d}S + \frac{\delta}{8}\|(\eta-\langle\eta\rangle_\Gamma)(t)\|_{L^2(\Gamma)}^2,\qquad \forall\, t\geq 0.
\label{auxE}
\end{align}
Multiplying \eqref{m1.a} by $u-\langle u\rangle_\Omega$ and integrating over $\Omega$, multiplying \eqref{1.c} by $\mu$ and \eqref{m1.e} by $\eta-\langle\eta\rangle_\Gamma$ respectively, and integrating over $\Gamma$, adding the resultants together, we obtain
\begin{align}
& \frac{\mathrm{d}}{\mathrm{d}t} \widetilde{\mathcal{E}}(u(t),\vp(t),\eta(t))
+ D\|\nabla u\|_{\bm{L}^2(\Omega)}^2 +\|\nabla_\Gamma \mu\|_{\bm{L}^2(\Gamma)}^2 + \|\nabla_\Gamma \eta\|_{\bm{L}^2(\Gamma)}^2 \non\\
&\quad = - \frac{A(t)}{|\Omega|}\int_\Gamma (\eta-u)\,\mathrm{d}S \underbrace{\int_\Omega (u-\langle u\rangle_\Omega)\,\mathrm{d}x}_{=0}\, + A(t) \int_\Gamma (\eta-u)(u-\langle u\rangle_\Omega)\,\mathrm{d}S\non\\
&\qquad \underbrace{+ \frac12\int_\Gamma \eta\partial_t\vp\,\mathrm{d}S -\frac12\int_\Gamma \partial_t\vp (\eta-\langle\eta\rangle_\Gamma) \,\mathrm{d}S}_{=0}\,
- A(t)\int_\Gamma (\eta-u)(\eta-\langle\eta\rangle_\Gamma) \,\mathrm{d}S \non\\
&\qquad + \frac{A(t)}{|\Gamma|}\int_\Gamma (\eta-u)\,\mathrm{d}S\underbrace{\int_\Gamma (\eta-\langle\eta\rangle_\Gamma) \,\mathrm{d}S}_{=0}\non\\
&\quad = A(t) \int_\Gamma (\eta-u)(u-\langle u\rangle_\Omega)\,\mathrm{d}S- A(t)\int_\Gamma (\eta-u)(\eta-\langle\eta\rangle_\Gamma) \,\mathrm{d}S.
\label{auxE1}
\end{align}
Since we are interested in the long-time behavior, so we just consider the case with positive time, without loss of generality, for $t\geq 1$. Using the uniform higher-order estimate \eqref{unit-es1} for $t\geq 1$, H\"{o}lder's inequality, the interpolation inequality \eqref{inter1} and Poincar\'{e}'s inequality, the right-hand side of \eqref{auxE1} can be estimated as follows
\begin{align}
& \left|A(t) \int_\Gamma (\eta-u)(u-\langle u\rangle_\Omega)\,\mathrm{d}S- A(t)\int_\Gamma (\eta-u)(\eta-\langle\eta\rangle_\Gamma) \,\mathrm{d}S\right|\non\\
&\quad \leq A(t)\|\eta-u\|_{L^2(\Gamma)}\big(\|u-\langle u\rangle_\Omega\|_{L^2(\Gamma)}+ \|\eta-\langle\eta\rangle_\Gamma\|_{L^2(\Gamma)}\big)\non\\
&\quad \leq CA(t)\big(\|\eta\|_{L^2(\Gamma)}+ \|u\|_{H^1(\Omega)}\big)\Big(\|\nabla u\|_{\bm{L}^2(\Omega)}^\frac12\|u-\langle u\rangle_\Omega\|_{L^2(\Omega)}^\frac12 + \|u-\langle u\rangle_\Omega\|_{L^2(\Omega)}\Big)\non\\
& \qquad + CA(t)\big(\|\eta\|_{L^2(\Gamma)}+ \|u\|_{H^1(\Omega)}\big)  \|\eta-\langle\eta\rangle_\Gamma\|_{L^2(\Gamma)}\non\\
&\quad \leq \frac{D}{2}\|\nabla u\|_{\bm{L}^2(\Omega)}^2+ \frac12 \|\nabla_\Gamma \eta\|_{\bm{L}^2(\Gamma)}^2+  \widetilde{C} \big[A(t)\big]^2,\qquad \forall\, t\geq 1,\non
\end{align}
where $\widetilde{C}>0$ depends on $\|u_0\|_{L^2(\Omega)}$, $\|\varphi_0\|_{H^1(\Gamma)}$, $\|v_0\|_{L^2(\Gamma)}$,  $\int_\Gamma F(\varphi_0)\,\mathrm{d}S$, $\langle\varphi_0\rangle_\Gamma$, $\Omega$, $\Gamma$, coefficients of the system.
As a consequence, we can deduce from \eqref{auxE1} that
\begin{align}
& \frac{\mathrm{d}}{\mathrm{d}t} \widetilde{\mathcal{E}}(u,\vp,\eta)
+ \frac{D}{2}\|\nabla u\|_{\bm{L}^2(\Omega)}^2 +\|\nabla_\Gamma \mu\|_{\bm{L}^2(\Gamma)}^2 + \frac{1}{2} \|\nabla_\Gamma \eta\|_{\bm{L}^2(\Gamma)}^2 \leq \widetilde{C} \big[A(t)\big]^2,
\label{auxE2}
\end{align}
for almost all $t\geq 1$.

In view of \eqref{auxE} and \eqref{auxE2}, we define
\begin{align}
\widehat{\mathcal{E}}(u(t),\vp(t),\eta(t))
= \widetilde{\mathcal{E}}(u(t),\vp(t),\eta(t)) + \widetilde{C} \int_t^{+\infty} \big[A(\tau)\big]^2\,\mathrm{d}\tau,\qquad \forall\, t\geq 1.
\label{aux4}
\end{align}
Then it follows that
\begin{align}
& \frac{\mathrm{d}}{\mathrm{d}t} \widehat{\mathcal{E}}(u,\vp,\eta)
+ \frac{D}{2}\|\nabla u\|_{\bm{L}^2(\Omega)}^2 +\|\nabla_\Gamma \mu\|_{\bm{L}^2(\Gamma)}^2 + \frac{1}{2} \|\nabla_\Gamma \eta\|_{\bm{L}^2(\Gamma)}^2 \leq 0,
\label{auxE5}
\end{align}
for almost all $t\geq 1$. Since $\widehat{\mathcal{E}}$ is bounded from below by a constant that only depends on $\Gamma$, there exists some $\widehat{\mathcal{E}}_\infty\in \mathbb{R}$ such that
\begin{align}
\lim_{t\to+\infty} \widehat{\mathcal{E}}(u(t),\vp(t),\eta(t))=\widehat{\mathcal{E}}_\infty.
\label{auxE6}
\end{align}
From the assumption \eqref{decayA}, we find
\begin{align}
\int_t^{+\infty} \big[A(\tau)\big]^2\,\mathrm{d}\tau \leq \frac{C}{2\alpha-1} (1+t)^{1-2\alpha},\qquad \forall\, t\geq 0. \label{decayA1}
\end{align}
This combined with \eqref{auxE6} also yields
\begin{align}
\lim_{t\to+\infty} \widetilde{\mathcal{E}}(u(t),\vp(t),\eta(t))=\widehat{\mathcal{E}}_\infty.
\label{auxE7}
\end{align}
Recalling the definition of $\omega(u_0,\vp_0,v_0)$, we can conclude  that the auxiliary energy functional $\widetilde{\mathcal{E}}(u,\vp,\eta)$ equals to the constant $\widehat{\mathcal{E}}_\infty$ on $\omega(u_0,\vp_0,v_0)$.

\subsubsection{Proof of Theorem \ref{thm:conv}}
We are in a position to prove Theorem \ref{thm:conv}. The proof is based on the well-known {\L}ojasiewicz-Simon approach \cite{LS83} for gradient-like evolution systems (see e.g., \cite{A2009,AW07,GGM2017,GGW18,HJ2001,JWZ,RH99} for applications to the Cahn-Hilliard equation and its extended systems). \medskip

\textbf{Step 1. Convergence of $\vp(t)$}. Define
$$
\widetilde{\omega}(u_0,\vp_0,v_0)
=\big\{\vp_\infty\ \big|\ (u_\infty,\vp_\infty,v_\infty)\in \omega(u_0,\vp_0,v_0)\big\}.
$$
According to Proposition \ref{pro:ome}-(3) and Lemma \ref{lem:LS}-(2), for each $\vp_\infty\in \widetilde{\omega}(u_0,\vp_0,v_0)$, there exist constants $\chi_{\vp_\infty}\in (0,1/2)$ and $C_{\vp_\infty},\,\ell_{\vp_\infty}>0$ such that
the {\L}ojasiewicz-Simon inequality \eqref{LSa} holds for
$$
\vp\in \mathcal{B}_{\vp_\infty}:= \Big\{\vp\in H^2(\Gamma)\ \Big|\ \|\vp-\vp_\infty\|_{H^2(\Gamma)}< \ell_{\vp_\infty}, \ \int_\Gamma\vp\,\mathrm{d}S=\int_\Gamma \vp_\infty\,\mathrm{d}S=\int_\Gamma \vp_0\,\mathrm{d}S\Big\}.
$$
Then the union of balls $\{\mathcal{B}_{\vp_\infty}:\,\vp_\infty\in \widetilde{\omega}(u_0,\vp_0,v_0)\}$
forms an open cover of the set $\widetilde{\omega}(u_0,\vp_0,v_0)$.  Since $\widetilde{\omega}(u_0,\vp_0,v_0)$ is compact in $H^2(\Gamma)$, we can find a finite sub-cover
$$
\widetilde{\omega}(u_0,\vp_0,v_0) \subset \mathcal{U}:= \bigcup_{i=1}^k\mathcal{B}_i,
$$
with $\mathcal{B}_i:=\mathcal{B}_{\vp_\infty^{(i)}}$ such that
$\vp_\infty^{(i)}\in \widetilde{\omega}(u_0,\vp_0,v_0), \, i=1,\cdots,k$ and the corresponding constants are denoted by $\chi^{(i)},\,C^{(i)},\,\ell^{(i)}$, respectively.

By the definition of the $\omega$-limit set $\omega(u_0,\vp_0,v_0)$, there exists a sufficient large time $t_*>1$ such that
$$
\vp(t)\in \mathcal{U},\qquad \forall\, t\geq t_*.
$$
Furthermore, recalling \eqref{auxE7}, the definition \eqref{EEE} and  the fact that $u_\infty$, $\eta_\infty$ are constants, we find
$$
E(\vp_\infty)=\widehat{\mathcal{E}}_\infty,\qquad\forall\,\vp_\infty  \in \widetilde{\omega}(u_0,\vp_0,v_0).
$$
Thus, taking
\begin{align}
\chi_*=\min_{i=1,\cdots,k}\big\{\chi^{(i)}\big\}\in \Big(0,\frac12\Big),\qquad C_*=\max_{i=1,\cdots,k}\big\{C^{(i)}\big\},
\label{chi1}
\end{align}
applying Lemma \ref{lem:LS}-(2) and Poincar\'e's inequality, we can deduce that the trajectory $\vp(t)$ satisfies
\begin{align}
 |E(\vp(t))-\widehat{\mathcal{E}}_\infty|^{1-\chi_*}
 &\leq C_*\left\|-\Delta_\Gamma \varphi(t) +W^{\prime}(\varphi(t))-\displaystyle{\frac{1}{|\Gamma|}\int_\Gamma W^{\prime}(\vp(t))\,\mathrm{d}S}\right\|_{(H^1(\Gamma))'}\non\\
& \leq C \left\| \mu(t)- \langle\mu(t)\rangle_\Gamma \right\|_{L^2(\Gamma)}
+C \left\| \eta(t)- \langle\eta(t)\rangle_\Gamma \right\|_{L^2(\Gamma)}\non\\
&\leq C \big(\left\| \nabla_\Gamma \mu(t) \right\|_{\bm{L}^2(\Gamma)}
+ \left\| \nabla_\Gamma \eta(t) \right\|_{\bm{L}^2(\Gamma)}\big),
\qquad \forall\, t\geq t_*.
\label{LSb}
\end{align}
Define
$$
\rho=\min\left\{\chi_*,\ \frac{\alpha-1}{2\alpha-1}\right\}\in \Big(0,\,\frac12\Big).
$$
Then the estimates \eqref{unit-es1}  and \eqref{LSb}  yield
\begin{align}
|E(\vp(t))-\widehat{\mathcal{E}}_\infty|
& \leq C\big(\left\| \nabla_\Gamma \mu(t) \right\|_{\bm{L}^2(\Gamma)}+ \left\| \nabla_\Gamma \eta(t) \right\|_{\bm{L}^2(\Gamma)}\big)^{\frac{1}{1-\chi_*}} \non\\
& \leq C\big(\left\| \nabla_\Gamma \mu(t) \right\|_{\bm{L}^2(\Gamma)}+ \left\| \nabla_\Gamma \eta(t) \right\|_{\bm{L}^2(\Gamma)}\big)^{\frac{1}{1-\rho}},
\qquad \forall\, t\geq t_*.
\label{LSc}
\end{align}

Define
\begin{align}
Y(t)
=\left( \frac{D}{2}\|\nabla u(t)\|_{\bm{L}^2(\Omega)}^2 +\|\nabla_\Gamma \mu(t)\|_{\bm{L}^2(\Gamma)}^2
+ \frac{1}{2} \|\nabla_\Gamma \eta(t)\|_{\bm{L}^2(\Gamma)}^2 \right)^\frac12,
\qquad \forall\, t\geq t_*.
\label{Y}
\end{align}
It follows from \eqref{auxE2} and \eqref{decayA1} that
\begin{align}
\widetilde{\mathcal{E}}(u(t),\vp(t),\eta(t))- \widehat{\mathcal{E}}_\infty \geq \int_t^{+\infty} \big[Y(\tau)\big]^2\,\mathrm{d}\tau -C (1+t)^{1-2\alpha},
\qquad \forall\, t\geq t_*.\non
\end{align}
which together with \eqref{unit-es1}, \eqref{auxE}, \eqref{LSc} and Poincar\'e's inequality yields
\begin{align}
 \int_t^{+\infty} \big[Y(\tau)\big]^2\,\mathrm{d}\tau
&\leq C\|\nabla u(t)\|_{\bm{L}^2(\Omega)}^2 + C\|\nabla_\Gamma \eta(t)\|_{\bm{L}^2(\Gamma)}^2+ |E(\vp(t))-\widehat{\mathcal{E}}_\infty|
+ C (1+t)^{1-2\alpha}\non\\
&\leq C \big[Y(t)\big]^{\frac{1}{1-\rho}}
+  C (1+t)^{1-2\alpha},
\qquad \forall\, t\geq t_*.
\label{Z1}
\end{align}
Let us introduce the auxiliary function
\begin{align}
Z(t)= Y(t)+ (1+t)^{(1-2\alpha)(1-\rho)}\in L^2(t_*,+\infty).
\label{Z}
\end{align}
Noticing that
\begin{align}
\int_t^{+\infty} \big[(1+\tau)^{(1-2\alpha)(1-\rho)}\big]^2\,\mathrm{d}\tau
\leq \int_t^{+\infty} (1+\tau)^{-2\alpha}\,\mathrm{d}\tau \leq (1+t)^{1-2\alpha},\qquad \forall\, t\geq t_*,
\non
\end{align}
we can deduce from \eqref{Z1} the following inequality:
\begin{align}
 \int_t^{+\infty} \big[Z(\tau)\big]^2\,\mathrm{d}\tau \leq C \big[Z(t)\big]^{\frac{1}{1-\rho}},
 \qquad \forall\, t\geq t_*.
 \label{Z3}
\end{align}
Recall the following elementary lemma (see e.g., \cite[Lemma 4.1]{HT01} or \cite[Lemma 7.1]{FS}):
\begin{lemma}\label{f}
Let $\rho\in(0,1/2)$. Assume that $\mathcal{Z}\geq 0$ be a measurable function on $(0,+\infty)$, $\mathcal{Z}\in L^2(0, +\infty)$ and there exist $C>0$ and
$t_*\geq 0$ such that
\begin{align}
 \int_t^{\infty} [\mathcal{Z}(\tau)]^2\,\mathrm{d}\tau\leq C[\mathcal{Z}(t)]^{\frac{1}{1-\rho}},\quad\text{for almost all}\ \ t\geq t_*.
 \nonumber
\end{align}
Then $\mathcal{Z}\in L^1(0,+\infty)$.
\end{lemma}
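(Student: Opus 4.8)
\textbf{Proof proposal for Lemma \ref{f}.}
The plan is the classical ODE-comparison argument for integral inequalities of this type (as in \cite{HT01,FS}). Set
\[
\Phi(t)=\int_t^{+\infty}[\mathcal{Z}(\tau)]^2\,\mathrm{d}\tau,\qquad t\geq 0.
\]
Since $\mathcal{Z}\in L^2(0,+\infty)$, the function $\Phi$ is well defined, nonnegative, non-increasing, absolutely continuous on $[0,+\infty)$, and $\Phi'(t)=-[\mathcal{Z}(t)]^2$ for almost all $t$. If $\Phi(t_0)=0$ for some $t_0\geq t_*$, then $\mathcal{Z}=0$ a.e. on $[t_0,+\infty)$ and the conclusion is immediate (recall $\mathcal{Z}\in L^1(0,t_0)$ by the Cauchy–Schwarz inequality on the finite-measure interval $(0,t_0)$); hence we may assume $\Phi(t)>0$ for all $t\geq 0$.

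Next I would convert the hypothesis into a differential inequality. For a.e. $t\geq t_*$, the assumption gives $\Phi(t)\leq C[\mathcal{Z}(t)]^{1/(1-\rho)}$, i.e. $\mathcal{Z}(t)\geq C^{-(1-\rho)}\,\Phi(t)^{1-\rho}$, whence
\[
-\Phi'(t)=[\mathcal{Z}(t)]^2\geq c_0\,\Phi(t)^{2(1-\rho)},\qquad c_0:=C^{-2(1-\rho)}>0.
\]
Writing $\beta:=2(1-\rho)$ and noting that $\beta>1$ and $\beta-1=1-2\rho\in(0,1)$ because $\rho\in(0,1/2)$, I divide by $\Phi(t)^\beta>0$ and integrate $\frac{\mathrm{d}}{\mathrm{d}t}\big(\tfrac{1}{\beta-1}\Phi^{1-\beta}\big)=-\Phi^{-\beta}\Phi'\geq c_0$ from $t_*$ to $t$. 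This yields $\Phi(t)^{-(\beta-1)}\geq \Phi(t_*)^{-(\beta-1)}+c_0(\beta-1)(t-t_*)$, and therefore the polynomial decay
\[
\Phi(t)\leq C_1(1+t)^{-\frac{1}{1-2\rho}},\qquad \forall\, t\geq t_*,
\]
with $C_1>0$ depending only on $C$, $\rho$, $\Phi(t_*)$ and $t_*$.

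Finally I would pass from the decay of $\Phi$ to integrability of $\mathcal{Z}$ via a dyadic decomposition. For $t\geq t_*$, the Cauchy–Schwarz inequality gives
\[
\int_{t}^{2t}\mathcal{Z}(\tau)\,\mathrm{d}\tau\leq t^{1/2}\Big(\int_{t}^{2t}[\mathcal{Z}(\tau)]^2\,\mathrm{d}\tau\Big)^{1/2}\leq t^{1/2}\,\Phi(t)^{1/2}\leq C_1^{1/2}\,t^{\,\frac12-\frac{1}{2(1-2\rho)}}=C_1^{1/2}\,t^{-\frac{\rho}{1-2\rho}}.
\]
Choosing $t=2^k$ and summing over $k$ with $2^k\geq t_*$, the exponent $-\rho/(1-2\rho)$ is strictly negative precisely because $\rho>0$, so the resulting geometric series converges; hence $\int_{t_*}^{+\infty}\mathcal{Z}(\tau)\,\mathrm{d}\tau<+\infty$. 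Combining this with $\mathcal{Z}\in L^1(0,t_*)$ (again Cauchy–Schwarz on a finite-measure set) gives $\mathcal{Z}\in L^1(0,+\infty)$. The only point requiring real care is the exponent bookkeeping: one must check that $2(1-\rho)>1$ makes the comparison ODE yield genuine decay and that the final dyadic sum converges exactly under $\rho\in(0,1/2)$; everything else is routine.
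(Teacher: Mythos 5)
The paper does not actually prove this lemma---it is recalled from \cite[Lemma 4.1]{HT01} and \cite[Lemma 7.1]{FS}---and your argument is correct and is essentially the standard proof from those references: the same ODE-comparison step giving $\Phi(t)\lesssim (1+t)^{-1/(1-2\rho)}$ followed by the same Cauchy--Schwarz-on-dyadic-blocks summation that the paper itself reproduces in the proof of Corollary \ref{cor:rate}. The only cosmetic point is that when $t_*=0$ you should start the dyadic sum at $t=1$ and treat $(0,1)$ by Cauchy--Schwarz (as you already do for $(0,t_*)$), since summing $2^{-k\rho/(1-2\rho)}$ over all integers $k$ with $2^k\geq 0$ diverges as $k\to-\infty$.
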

\noindent
Hence, in view of \eqref{Z} and \eqref{Z3}, we can apply Lemma \ref{f} to conclude that
\begin{align}
\int_{t_*}^{+\infty}Z(\tau)\,\mathrm{d}\tau<+\infty.
\label{L1Z}
\end{align}
Since $\alpha>1$, it holds
\begin{align}
\int_{t_*}^{+\infty} (1+\tau)^{(1-2\alpha)(1-\rho)} \,\mathrm{d}\tau
\leq \int_{t_*}^{+\infty} (1+\tau)^{-\alpha}\,\mathrm{d}\tau<+\infty.
\non
\end{align}
Then from \eqref{vpt}, \eqref{Y} and \eqref{L1Z}, we can conclude
\begin{align}
\int_{t_*}^{+\infty}\|\partial_t\vp(\tau)\|_{(H^1(\Gamma))'}\,\mathrm{d}\tau<+\infty.
\label{L1vpt}
\end{align}
As a consequence, $\vp(t)$ converges in $(H^1(\Gamma))'$ as $t\to+\infty$. Recalling Proposition \ref{pro:ome}, we can find a unique function $\vp_\infty\in H^3(\Gamma)$ satisfying \eqref{sta1} such that
$$
\lim_{t\to+\infty}\|\vp(t)-\vp_\infty\|_{H^2(\Gamma)}=0.
$$

\textbf{Step 2. Convergence of $v(t)$}. From \eqref{unit-es1}, \eqref{decayA}, \eqref{vt}, \eqref{Y} and \eqref{L1Z}, we obtain
\begin{align}
\int_{t_*}^{+\infty}\|\partial_t v(\tau)\|_{(H^1(\Gamma))'}\,\mathrm{d}\tau
& \leq \int_{t_*}^{+\infty}Z(\tau)\,\mathrm{d}\tau +
\int_{t_*}^{+\infty}A(\tau)\|\eta(\tau)-u(\tau)\|_{L^2(\Gamma)}\,\mathrm{d}\tau\non\\
& \leq \int_{t_*}^{+\infty}Z(\tau)\,\mathrm{d}\tau +C\int_{t_*}^{+\infty}(1+\tau)^{-\alpha}\,\mathrm{d}\tau<+\infty.
\end{align}
Similar to the argument for $\vp$, we can find a unique function $v_\infty\in H^1(\Gamma)$ such that
$$
\lim_{t\to+\infty}\|v(t)-v_\infty\|_{L^2(\Gamma)}=0.
$$
Since we have known that $v_\infty$ is a weak solution to the elliptic equation \eqref{sta4}, thanks to $\vp_\infty\in H^3(\Gamma)$, we infer from the surface elliptic theory that $v_\infty\in H^2(\Gamma)$.\medskip

\textbf{Step 3. Convergence of $u(t)$, $\mu(t)$, $\eta(t)$}. Since the asymptotic limits $\vp_\infty$ and $v_\infty$ have been uniquely determined, according to the discussions in Remark \ref{rem:ccov}, we can uniquely determine the constants $u_\infty$, $\mu_\infty$ and $\eta_\infty$ and arrive at the conclusions \eqref{conv1}, \eqref{conv2}.\medskip

The proof of Theorem \ref{thm:conv} is complete. \hfill $\square$

\subsubsection{Convergence rate}
With the decay property of $A(t)$, we are able to derive some estimates on the convergence rate by applying the energy method as in \cite{Ben,HJ2001,JWZ}.

\begin{corollary}[Convergence rate]\label{cor:rate}
Let the assumptions of Theorem \ref{thm:conv} be satisfied. We have
\begin{align}
& \|u(t)-u_\infty\|_{(H^1(\Omega))'}+ \|\vp(t)-\vp_\infty\|_{(H^1(\Gamma))'}+ \|v(t)-v_\infty\|_{(H^1(\Gamma))'}\non\\
&\quad \leq (1+t)^{-\lambda},\qquad \forall\, t\geq 1,
\label{rate0}
\end{align}
where
$$
\lambda=\min\left\{\frac{\chi_*}{1-2\chi_*},\ \alpha-1\right\}>0,
$$
and $\chi_*\in (0,1/2)$ is given as in \eqref{chi1}.
\end{corollary}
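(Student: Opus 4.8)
The plan is to refine the argument of Theorem \ref{thm:conv} by keeping track of quantitative decay rates rather than just summability. The starting point is the differential inequality \eqref{auxE2} together with the {\L}ojasiewicz–Simon estimate \eqref{LSc}. First I would revisit the auxiliary functional $H(t):=\widetilde{\mathcal{E}}(u(t),\vp(t),\eta(t))-\widehat{\mathcal{E}}_\infty$, which by construction is nonnegative (from \eqref{auxE7} and monotonicity of $\widehat{\mathcal{E}}$), tends to $0$ as $t\to+\infty$, and satisfies $\frac{\mathrm d}{\mathrm dt}H(t)\le -[Y(t)]^2+\widetilde{C}[A(t)]^2$ for a.e.\ $t\ge 1$, where $Y(t)$ is as in \eqref{Y}. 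The key point from Step 1 of the proof of Theorem \ref{thm:conv} is that, since $\vp(t)$ eventually lies in the finite sub-cover $\mathcal U$, one has the bound $|E(\vp(t))-\widehat{\mathcal{E}}_\infty|^{1-\chi_*}\le C\,Y(t)$ for $t\ge t_*$; combining this with the elementary inequality $H(t)\le C\,Y(t)^2 + |E(\vp(t))-\widehat{\mathcal{E}}_\infty| + C(1+t)^{1-2\alpha}$ (which follows from \eqref{auxE}, Poincar\'e's inequality, and \eqref{decayA1}) gives $H(t)\le C\,Y(t)^{\frac{1}{1-\chi_*}} + C(1+t)^{1-2\alpha}$, hence $[Y(t)]^2\ge c\,H(t)^{2(1-\chi_*)} - C(1+t)^{1-2\alpha}$ as long as $H(t)\ge$ the remainder term; otherwise $H(t)$ is already controlled by $(1+t)^{1-2\alpha}$.

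The heart of the argument is then a Gronwall-type comparison: feeding this lower bound for $[Y(t)]^2$ into $\frac{\mathrm d}{\mathrm dt}H(t)\le -[Y(t)]^2+\widetilde C[A(t)]^2\le -c\,H(t)^{2(1-\chi_*)}+C(1+t)^{1-2\alpha}$, one obtains a differential inequality of the form $H'\le -cH^{2-2\chi_*}+C(1+t)^{-2\alpha+1}$ with $2-2\chi_*\in(1,2)$. Solving the homogeneous part yields polynomial decay $H(t)\lesssim (1+t)^{-1/(1-2\chi_*)}$, and since $\alpha>1$ one has $2\alpha-1>1\ge \frac{1}{1-2\chi_*}$ is \emph{not} automatic, so one must take the slower of the two rates; a standard ODE comparison lemma (as in \cite[Lemma 4.1]{HJ2001} or \cite{JWZ}) then gives
\[
H(t)\le C(1+t)^{-\beta},\qquad \beta:=\min\Big\{\tfrac{1}{1-2\chi_*},\ 2\alpha-1\Big\},\qquad \forall\, t\ge t_*.
\]
From $H(t)\to 0$ with this rate I would next recover the rate for $Y$: integrating \eqref{auxE5} from $t$ to $+\infty$ gives $\int_t^{+\infty}[Y(\tau)]^2\,\mathrm d\tau\le H(t)\le C(1+t)^{-\beta}$. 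The same bootstrap used in Step 1 (writing $Z(t)=Y(t)+(1+t)^{(1-2\alpha)(1-\rho)}$ and invoking \eqref{Z3}, or more directly using $\int_t^\infty Z^2\le C H(t)+\text{remainder}$ together with the {\L}ojasiewicz estimate in the form $\int_t^\infty Z^2\le C Z(t)^{1/(1-\rho)}$) yields, via the elementary lemma on the tail integral, a pointwise rate $\int_t^{+\infty}\|\partial_t\vp(\tau)\|_{(H^1(\Gamma))'}\,\mathrm d\tau + \int_t^{+\infty}\|\partial_t v(\tau)\|_{(H^1(\Gamma))'}\,\mathrm d\tau \le C(1+t)^{-\lambda}$ with $\lambda=\min\{\tfrac{\chi_*}{1-2\chi_*},\ \alpha-1\}$; the exponent $\tfrac{\chi_*}{1-2\chi_*}$ arises precisely as half of $\tfrac{1}{1-2\chi_*}$ minus the loss from converting an $L^2$-in-time bound on $Y$ into an $L^1$-in-time bound on $\|\partial_t\vp\|$. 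Since $\|\vp(t)-\vp_\infty\|_{(H^1(\Gamma))'}\le \int_t^{+\infty}\|\partial_t\vp(\tau)\|_{(H^1(\Gamma))'}\,\mathrm d\tau$ and similarly for $v$, this gives the stated decay for $\vp$ and $v$. For $u$, I would use \eqref{mau}, the relation $u_\infty=\eta_\infty$ (valid when $A_\infty>0$) or more robustly the decay $A(t)\le C(1+t)^{-\alpha}$ together with the already-established convergence: $\|u(t)-u_\infty\|_{(H^1(\Omega))'}\le \int_t^{+\infty}\|\partial_t u(\tau)\|_{(H^1(\Omega))'}\,\mathrm d\tau$, and \eqref{vu} bounds $\|\partial_t u\|_{(H^1(\Omega))'}$ by $C(\|\nabla u\|+\|q\|_{L^2(\Gamma)})\le C Y(t)+CA(t)\|\eta-u\|_{L^2(\Gamma)}\le C Z(t)+C(1+t)^{-\alpha}$, which is integrable with the same rate.

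The main obstacle I anticipate is the careful handling of the competition between the intrinsic {\L}ojasiewicz rate $\tfrac{1}{1-2\chi_*}$ and the forcing rate $2\alpha-1$ coming from $[A(t)]^2$ in the differential inequality for $H$. One must ensure the ODE comparison lemma applies in the regime where neither term dominates uniformly, and one must verify that the threshold argument (when $H(t)$ drops below the remainder term, it stays comparably small thereafter) is consistent with the monotone-plus-forcing structure of $\widehat{\mathcal E}$; this is routine but delicate bookkeeping. A secondary technical point is that the loss of a factor of $\tfrac12$ in the exponent when passing from the $L^2$-tail estimate on $Y$ to the $L^1$-tail estimate on $\|\partial_t\vp\|_{(H^1(\Gamma))'}$ must be tracked exactly to land on $\tfrac{\chi_*}{1-2\chi_*}$ rather than a worse exponent — this is why $\lambda$ in the statement is smaller than $\beta$. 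All other ingredients (interpolation \eqref{inter1}, Poincar\'e, the uniform bounds \eqref{unit-es1}, the strict separation for $t\ge 1$) are already available from Theorems \ref{thm:weak}–\ref{thm:conv}.
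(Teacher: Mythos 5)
Your proposal is correct and follows essentially the same route as the paper: one derives $\frac{\mathrm{d}}{\mathrm{d}t}\mathcal{G}+C[\mathcal{G}]^{2(1-\chi_*)}\le C'(1+t)^{2(1-\chi_*)(1-2\alpha)}$ for the energy gap $\mathcal{G}(t)=\widehat{\mathcal{E}}(u(t),\vp(t),\eta(t))-\widehat{\mathcal{E}}_\infty$ from \eqref{auxE5} and \eqref{LSb}, applies an ODE comparison lemma (the paper cites \cite[Lemma 2.8]{Ben}) to obtain $\mathcal{G}(t)\le C(1+t)^{-\lambda_1}$ with $\lambda_1=\min\{\frac{1}{1-2\chi_*},\,2\alpha-1\}$, and then converts this into the tail bound $\int_t^{+\infty}Y(\tau)\,\mathrm{d}\tau\le C(1+t)^{-(\lambda_1-1)/2}$ by Cauchy--Schwarz on dyadic intervals, which is exactly the ``loss of a factor of $\tfrac12$ in the exponent'' you track and yields $\lambda=(\lambda_1-1)/2=\min\{\frac{\chi_*}{1-2\chi_*},\,\alpha-1\}$. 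The one imprecision is attributing the pointwise rate to the elementary tail-integral lemma (Lemma \ref{f}), which only gives $L^1$ summability without a rate; the rate itself comes from the dyadic decomposition $\int_t^{+\infty}Y=\sum_{i\ge 0}\int_{2^it}^{2^{i+1}t}Y$ as in \cite[Section 3]{Ben}, a mechanism you nonetheless describe correctly.
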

\begin{proof}
Recall \eqref{aux4}, we denote by
$$
\mathcal{G}(t)=\widehat{\mathcal{E}}(u(t),\vp(t),\eta(t)) - \widehat{\mathcal{E}}_\infty\geq 0.
$$
It follows from \eqref{auxE5} that
\begin{align}
& \frac{\mathrm{d}}{\mathrm{d}t} \mathcal{G}(t)+ \big[Y(t)\big]^2 \leq 0,
\label{auxE5a}
\end{align}
for almost all $t\geq t_*$.
On the other hand, we infer from \eqref{decayA1}, \eqref{LSb} and Poincar\'e's inequality that
\begin{align}
\big[\mathcal{G}(t)\big]^{2(1-\chi_*)} & \leq C\big[Y(t)\big]^2 + C'(1+t)^{2(1-\chi_*)(1-2\alpha)},
\label{auxE5b}
\end{align}
which together with \eqref{auxE5a} yields
\begin{align}
\frac{\mathrm{d}}{\mathrm{d}t} \mathcal{G}(t)+ C\big[\mathcal{G}(t)\big]^{2(1-\chi_*)}
\leq C'(1+t)^{2(1-\chi_*)(1-2\alpha)},
\label{auxE5c}
\end{align}
for almost all $t\geq t_*$. Applying \cite[Lemma 2.8]{Ben}, we can obtain the following energy decay
\begin{align}
 \mathcal{G}(t)\leq C(1+t)^{-\lambda_1},\qquad \forall\, t\geq t_*,
 \label{rate1}
\end{align}
where
$$
\lambda_1=\min\left\{\frac{1}{1-2\chi_*},\ 2\alpha-1\right\}>1.
$$
Then arguing as in \cite[Section 3]{Ben}, from \eqref{auxE5a} and \eqref{rate1}, we find that for any $t\geq t_*>1$, it holds
\begin{align}
\int_t^{2t} Y(\tau)\,\mathrm{d}\tau \leq t^\frac12\left(\int_t^{2t} \big[Y(\tau)\big]^2\,\mathrm{d}\tau\right)^\frac12\leq Ct^{\frac{1-\lambda_1}{2}},\qquad \forall\, t\geq t_*.\non
\end{align}
This implies
\begin{align}
\int_t^{+\infty} Y(\tau)\,\mathrm{d}\tau \leq \sum_{i=0}^{+\infty}
\int_{2^it}^{2^{i+1}t} Y(\tau)\,\mathrm{d}\tau\leq
C  \sum_{i=0}^{+\infty} (2^i t)^{-\lambda}
\leq C(1+t)^{-\lambda},\qquad \forall\, t\geq t_*,
\label{rate2}
\end{align}
where
$$
\lambda=\frac12(\lambda_1-1)>0.
$$
From \eqref{vpt} and \eqref{rate2}, we can conclude
\begin{align}
\int_{t}^{+\infty}\|\partial_t\vp(\tau)\|_{(H^1(\Gamma))'}\,\mathrm{d}\tau
\leq C(1+t)^{-\lambda},\qquad \forall\, t\geq t_*.
\label{rate3}
\end{align}
Next,  from  \eqref{unit-es1}, \eqref{decayA}, \eqref{vt} and \eqref{rate2}, we obtain
\begin{align}
\int_{t}^{+\infty}\|\partial_t v(\tau)\|_{(H^1(\Gamma))'}\,\mathrm{d}\tau
&\leq \int_{t}^{+\infty}Y(\tau)\,\mathrm{d}\tau +
\int_{t}^{+\infty}A(\tau)\|\eta(\tau)-u(\tau)\|_{L^2(\Gamma)}\,\mathrm{d}\tau\non\\
& \leq \int_{t}^{+\infty}Y(\tau)\,\mathrm{d}\tau +C\int_{t}^{+\infty}(1+\tau)^{-\alpha}\,\mathrm{d}\tau \non\\
& \leq C(1+t)^{-\lambda},\qquad \forall\, t\geq t_*.
\label{rate4}
\end{align}
We infer from \eqref{vu} that
\begin{align}
\|\partial_t u\|_{(H^1(\Omega))'}
 & \leq
C\|\nabla  u\|_{\bm{L}^2(\Omega)} + C A(t)\big(\|u\|_{L^2(\Gamma)}+ \|\eta\|_{L^2(\Gamma)}\big).
\non
\end{align}
By \eqref{unit-es1}, \eqref{decayA} and \eqref{rate2}, we again get
\begin{align}
\int_{t}^{+\infty}\|\partial_t u(\tau)\|_{(H^1(\Omega))'}\,\mathrm{d}\tau
& \leq C(1+t)^{-\lambda},\qquad \forall\, t\geq t_*.
\label{rate5}
\end{align}
Finally, from \eqref{rate3}, \eqref{rate4} and \eqref{rate5} we easily conclude \eqref{rate0}.
\end{proof}
\begin{remark}
Taking advantage of the decay rate \eqref{rate0} and the uniform-in-time estimate \eqref{unit-es1}, we can obtain decay estimate in higher-order norms by interpolation.
\end{remark}

\section{Reduced System in the Non-equilibrium Case}
\setcounter{equation}{0}
\label{sec:RdS}

In this section, we study the reduced problem \eqref{r1.a}--\eqref{rini2} in the non-equilibrium case, that is, the mass exchange term $q$ takes the specific  form of \eqref{q2}.

\subsection{Well-posedness}
In order to prove Theorem \ref{thm:rweak}, we first investigate the large cytosolic diffusion limit as $D\to +\infty$ for the full bulk-surface coupled system \eqref{1.a}--\eqref{1.f} with a cut-off approximation of the mass exchange term \eqref{q2}.

\begin{proposition}[Large cytosolic diffusion limit]\label{prop:LD}
Let $T\in (0,+\infty)$ and $\{D_n\}_{n\in\mathbb{Z}^+}$ be an increasing sequence with $D_n>0$ and $\lim_{n\to+\infty}D_n=+\infty$.
Suppose that the assumptions \textbf{(H1)}, \textbf{(H3)} are satisfied and the mass exchange term $q$ takes the specific form of \eqref{q2m} under the assumption \textbf{(H4)}. For arbitrary but fixed initial data $(u_0,\,\varphi_0,\,v_0)$ satisfying \eqref{cini}, we denote the weak solution to problem \eqref{1.a}--\eqref{ini2} on $[0,T]$ corresponding to $D=D_n$ by $(u^{D_n},\varphi^{D_n},\mu^{D_n},v^{D_n},\eta^{D_n})$. Then there exists a subsequence $\{D_n\}_{n\in\mathbb{Z}^+}$ (not relabelled for simplicity) such that as $D_n\to +\infty$, it holds
\begin{align*}
& u^{D_n} \rightharpoonup u\ \ \text{weakly in}\ \ L^2(0,T;H^1(\Omega)),\ \ \text{weakly star in}\ \ L^\infty(0,T;L^2(\Omega)),\\
& u(t)\in\mathbb{R}\ \ \text{for any}\ \ t\in[0,T],\ \ \text{and}\ \ \langle u^{D_n} \rangle_\Omega\to u\ \ \text{in}\ \  C([0,T]),\\
& \vp^{D_n} \rightharpoonup \vp\ \ \text{weakly in}\ \ L^{4}(0,T;H^2(\Gamma))\cap L^2(0,T;W^{2,p}(\Gamma))\cap H^1(0,T;(H^1(\Gamma))'),\\
& \vp^{D_n} \rightharpoonup \vp\ \ \text{weakly star in}\ \ L^{\infty}(0,T;H^1(\Gamma)),\\
& v^{D_n} \rightharpoonup v\ \ \text{weakly in}\ \ L^2(0,T;H^1(\Omega))\cap H^1(0,T;(H^1(\Gamma))'),\\
& v^{D_n} \rightharpoonup v\ \ \text{weakly star in}\ \ L^{\infty}(0,T;L^2(\Gamma)),\\
& \mu^{D_n} \rightharpoonup \mu\ \ \text{weakly in}\ \ L^2(0,T;H^1(\Omega)),\\
& \eta^{D_n} \rightharpoonup \eta\ \ \text{weakly in}\ \ L^2(0,T;H^1(\Omega)),
\end{align*}
and the limit function $(u,\vp,v,\mu,\eta)$ is a weak solution to the reduced problem \eqref{r1.a}--\eqref{rini2} on $[0,T]$ in the sense of Definition \ref{def:rweak} with the new initial datum $u|_{t=0}=\langle u_0 \rangle_\Omega$.
\end{proposition}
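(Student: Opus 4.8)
The plan is to pass to the limit $D_n\to+\infty$ in the weak formulation \eqref{w1.a}--\eqref{w1.f} of the full bulk-surface coupled system, exploiting the fact that the linear growth condition on $\widetilde{q}$ (guaranteed by \textbf{(H4)}) makes all the $\kappa$-independent, $n$-independent estimates established in the proof of Theorem \ref{thm:weak} \emph{also} independent of $D$, except for the single term $D\|\nabla u^{D_n}\|_{L^2(0,T;\bm{L}^2(\Omega))}^2$. First I would revisit the energy identity \eqref{BEL6} and the auxiliary estimates \eqref{esL5a}, \eqref{esmuh1a}, \eqref{VL2Wq}: since the right-hand side of \eqref{BEL6} is controlled via the linear growth of $\widetilde{q}$ and Young's inequality (absorbing only a small fraction $\gamma\le D/(2\widetilde C)$ of $D\|\nabla u^{D_n}\|^2$, cf. \eqref{uknL2}), one obtains bounds for $u^{D_n}$ in $L^\infty(0,T;L^2(\Omega))$, for $\varphi^{D_n}$ in $L^\infty(0,T;H^1(\Gamma))\cap L^4(0,T;H^2(\Gamma))\cap L^2(0,T;W^{2,p}(\Gamma))$, for $v^{D_n},\eta^{D_n}$ in $L^\infty(0,T;L^2(\Gamma))\cap L^2(0,T;H^1(\Gamma))$, for $\mu^{D_n}$ in $L^2(0,T;H^1(\Gamma))$, and for $F'(\varphi^{D_n})$ in $L^2(0,T;L^p(\Gamma))$, all uniformly in $D_n\ge D_1>0$. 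Moreover the energy identity gives $D_n\|\nabla u^{D_n}\|_{L^2(0,T;\bm{L}^2(\Omega))}^2\le C$, hence $\|\nabla u^{D_n}\|_{L^2(0,T;\bm{L}^2(\Omega))}^2\le C/D_n\to 0$, which is the mechanism forcing the limit $u$ to be spatially constant.

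Next I would extract the time-derivative bounds. The estimates for $\partial_t\varphi^{D_n}$ and $\partial_t v^{D_n}$ in $L^2(0,T;(H^1(\Gamma))')$ are obtained exactly as in \eqref{estvv}, uniformly in $D_n$. For $u^{D_n}$ one cannot expect $\partial_t u^{D_n}$ bounded in $L^2(0,T;(H^1(\Omega))')$ uniformly (the $D_n\nabla u^{D_n}$ term is only $O(\sqrt{D_n})$), so instead I would test \eqref{w1.a} with spatially constant functions: this shows $\tfrac{d}{dt}\langle u^{D_n}\rangle_\Omega=-|\Omega|^{-1}\int_\Gamma\widetilde q\,\mathrm dS$, and since the right-hand side is bounded in $L^2(0,T)$ by the linear growth of $\widetilde q$ together with the trace estimate \eqref{inter1} and the uniform bounds on $u^{D_n},\varphi^{D_n},v^{D_n}$, we get $\langle u^{D_n}\rangle_\Omega$ bounded in $H^1(0,T)\hookrightarrow\hookrightarrow C([0,T])$. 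By Arzelà--Ascoli (up to a subsequence) $\langle u^{D_n}\rangle_\Omega\to u$ in $C([0,T])$ for some $u\in H^1(0,T)$; combined with $\|u^{D_n}-\langle u^{D_n}\rangle_\Omega\|_{L^2(0,T;L^2(\Omega))}\le C_\Omega\|\nabla u^{D_n}\|_{L^2(0,T;\bm L^2(\Omega))}\to 0$ via Poincaré, this yields $u^{D_n}\to u$ in $L^2(0,T;L^2(\Omega))$ with $u(t)\in\mathbb R$ for every $t$, and $u|_{t=0}=\lim\langle u^{D_n}(0)\rangle_\Omega=\langle u_0\rangle_\Omega$.

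Then I would apply the Banach--Alaoglu theorem and the Aubin--Lions--Simon lemma to pass to weak/weak-star limits in the remaining variables and to get strong convergence $\varphi^{D_n}\to\varphi$ in $L^2(0,T;H^1(\Gamma))$ and $v^{D_n}\to v$ in, say, $L^2(0,T;L^2(\Gamma))$, hence a.e. convergence (along a further subsequence). The identification of the nonlinear term $F'(\varphi^{D_n})\rightharpoonup F'(\varphi)$ in $L^2(0,T;L^2(\Gamma))$ is carried out exactly as in the passage $\kappa\to 0^+$ in the proof of Theorem \ref{thm:weak}: from the $L^1$-bound \eqref{eq:uniformhatS}--\eqref{FF1} one shows $|\varphi(x,t)|<1$ a.e., then a.e.\ convergence of $F'(\varphi^{D_n})$ plus the $L^2$-bound give weak $L^2$ convergence. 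With all this in hand, passing to the limit in \eqref{w1.c}--\eqref{w1.f} is routine (the only subtlety is the product $\widetilde q^{D_n}\zeta$, handled by strong $L^2$ convergence of $u^{D_n},\varphi^{D_n},v^{D_n}$ and continuity of $\widetilde q$), and in \eqref{w1.a} tested against time-dependent spatially constant functions it produces the ODE \eqref{r1.a}, or equivalently the algebraic relation \eqref{rew1.a} after integrating in time and using mass conservation \eqref{mass1}. Finally, the continuity statements $u\in C([0,T])$, $\varphi\in C([0,T];H^1(\Gamma))$, $v\in C([0,T];L^2(\Gamma))$ and attainment of the initial data follow as in Section \ref{mei}.

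The main obstacle, as usual in such singular-limit arguments, is controlling $u^{D_n}$: the natural energy estimate only bounds $\sqrt{D_n}\,\nabla u^{D_n}$ in $L^2$, so one loses the $L^2(0,T;H^1(\Omega))$-bound on $u^{D_n}$ uniformly in the \emph{strong} norm and correspondingly cannot bound $\partial_t u^{D_n}$ in the dual space uniformly. The resolution is precisely the split $u^{D_n}=\langle u^{D_n}\rangle_\Omega+(u^{D_n}-\langle u^{D_n}\rangle_\Omega)$: the spatial mean is governed by a scalar ODE with $D_n$-uniform right-hand side and converges strongly in $C([0,T])$, while the oscillatory part is killed by Poincaré and the $O(1/\sqrt{D_n})$ decay of $\nabla u^{D_n}$. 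A secondary technical point is ensuring the identification of $F'(\varphi)$ and the separation property $|\varphi|<1$ a.e.\ survive the limit; this is inherited verbatim from the $\kappa\to 0^+$ argument because all the relevant bounds are $D_n$-uniform. The whole scheme parallels \cite[Proposition 2.6]{AK20}, the only new ingredient being the treatment of the singular potential, which is already in place from Theorem \ref{thm:weak}.
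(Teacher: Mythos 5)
Your proposal is correct and follows essentially the same route as the paper's proof: the $D_n$-uniform energy estimates from the linear growth of $\widetilde q$, the decay $\|\nabla u^{D_n}\|_{L^2(0,T;\bm{L}^2(\Omega))}^2\le C/D_n$, the $H^1(0,T)\hookrightarrow\hookrightarrow C([0,T])$ bound on $\langle u^{D_n}\rangle_\Omega$ combined with Poincar\'e's inequality to recover strong convergence of $u^{D_n}$ (and its trace) toward a spatially constant limit, Aubin--Lions for $\varphi^{D_n}$ and $v^{D_n}$, and the identification of $F'(\varphi)$ inherited from the $\kappa\to 0^+$ argument. No gaps.
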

\begin{proof}
The proof follows the argument for \cite[Porposition 2.6]{AK20}. Below we sketch the main steps and point out necessary modifications. All the convergent results stated below should be understood in the sense of subsequences (not relabelled for simplicity).

In view of \eqref{BEL6}, every weak solution $(u^{D_n},\varphi^{D_n},\mu^{D_n},v^{D_n},\eta^{D_n})$ satisfies the corresponding energy identity
\begin{align}
& \mathcal{E}(u^{D_n}(t),\varphi^{D_n}(t),v^{D_n}(t))
+ \int_0^t\left[D_n\int_\Omega |\nabla u^{D_n}|^2\,\mathrm{d}x +\int_\Gamma \big(|\nabla_\Gamma \mu^{D_n}|^2+|\nabla_\Gamma \eta^{D_n}|^2\big)\,\mathrm{d}S\right]\mathrm{d}\tau \nonumber\\
&\quad  = \mathcal{E}(u_0,\varphi_0,v_0)+ \int_0^t\!\int_\Gamma q^{D_n} (\eta^{D_n}-u^{D_n})\,\mathrm{d}S\mathrm{d}\tau,\qquad \forall\,t\in (0,T],
\label{BEL-Dn}
\end{align}
where
$$
q^{D_n}= B_1u^{D_n} -B_1\widetilde{h}(u^{D_n})v^{D_n}-B_2 v^{D_n}.
$$
Using the assumption \textbf{(H4)}, by an argument similar to that for \eqref{esL5a}, we obtain the following uniform estimate
\begin{align}
&  \|u^{D_n}(t)\|_{L^2(\Omega)}^2
+ \|\varphi^{D_n}(t)\|_{H^1(\Gamma)}^2
+ \|v^{D_n}(t)\|_{L^2(\Gamma)}^2
+ \|\eta^{D_n}(t)\|_{L^2(\Gamma)}^2 + \int_0^t  D_n\|\nabla u^{D_n}(\tau)\|_{\bm{L}^2(\Omega)}^2\,\mathrm{d}\tau  \non\\
&\qquad  +\int_0^t \Big( \|\nabla_\Gamma \mu^{D_n}(\tau)\|_{\bm{L}^2(\Gamma)}^2+\|\nabla_\Gamma \eta^{D_n}(\tau)\|_{\bm{L}^2(\Gamma)}^2+ \|\Delta_\Gamma \varphi^{D_n}(\tau)\|_{L^2(\Gamma)}^2\Big)\,\mathrm{d}\tau \non\\
&\quad \leq C(T),\qquad \forall\, t\in (0,T],
\label{esL5Dn}
\end{align}
where $C(T)>0$ depends on $\|u_0\|_{L^2(\Omega)}$, $\|\varphi_0\|_{H^1(\Gamma)}$, $\|v_0\|_{L^2(\Gamma)}$, $\int_\Gamma F(\varphi_0)\,\mathrm{d}S$, $\Omega$, $\Gamma$, $\delta$ and $T$. In particular, in view of \eqref{uknL2}, we find that the constant $C(T)$ can be chosen independent of $D_n$ whenever $D_n\geq 1$.

By a similar argument as in the proof of Theorem \ref{thm:weak}, we obtain
\begin{align}
& u^{D_n}\in L^\infty(0,T;L^2(\Omega))\cap L^2(0,T;H^1(\Omega)),\notag\\
& \varphi^{D_n} \in L^{\infty}(0,T;H^1(\Gamma))\cap L^{4}(0,T;H^2(\Gamma))\cap L^2(0,T;W^{2,p}(\Gamma)) \cap H^{1}(0,T;(H^1(\Gamma))'),\notag \\
&\mu^{D_n} \in L^{2}(0,T;H^1(\Gamma)),\notag \\
& v^{D_n},\,\eta^{D_n}  \in L^{\infty}(0,T;L^2(\Gamma))\cap L^{2}(0,T;H^1(\Gamma)) \cap H^1(0,T;(H^1(\Gamma))'),\notag\\
& F'(\varphi^{D_n})\in L^2(0,T;L^p(\Gamma)),\non
\end{align}
for any $p\geq 2$, with uniform bounds in the corresponding function spaces for $D_n\geq 1$. Besides, it holds that
$\varphi^{D_n}\in L^{\infty}(\Gamma\times (0,T))$ with $|\varphi^{D_n}(x,t)|<1$ almost everywhere on $\Gamma\times(0,T)$ and $\|\varphi^{D_n}\|_{L^\infty(0,T;L^\infty(\Gamma))}\leq 1$. However, according to \eqref{estvva}, here we lose the uniform estimate for $\partial_t u^{D_n}$ in $L^2(0,T;(H^1(\Omega))')$.

Hence, the expected weak and weak star convergence for $(u^{D_n},\varphi^{D_n},\mu^{D_n},v^{D_n},\eta^{D_n})$ easily follows from the uniform estimates mentioned above. Next, using the Aubin-Lions lemma, we obtain the strong convergence as $D_n\to +\infty$
\begin{align*}
& \vp^{D_n} \to \vp\quad \text{strongly in}\ \ L^{2}(0,T;H^1(\Gamma))
\end{align*}
and thus its a.e. convergence on $\Gamma\times(0,T)$. Therefore, $F'(\vp^{D_n})\to F'(\vp)$ a.e. on $\Gamma\times(0,T)$ as well. This combined with the uniform boundedness of $F'(\vp^{D_n})$ in $L^2(0,T;L^2(\Gamma))$  enables us to conclude that
\begin{align*}
F'(\vp^{D_n})\rightharpoonup F'(\vp)\quad \text{weakly in}\ \ L^2(0,T;L^2(\Gamma)).
\end{align*}
Concerning $u^{D_n}$, we infer from \eqref{esL5Dn} that
$$
 \int_0^T  \|\nabla u^{D_n}(t)\|_{\bm{L}^2(\Omega)}^2\,\mathrm{d}t
 \leq \frac{C(T)}{D_n}\to 0, \qquad \text{as} \ D_n\to +\infty.
$$
Furthermore, from \eqref{esL5Dn}, \eqref{q2m} and the assumption \textbf{(H4)}, we find $\langle u^{D_n}\rangle_\Omega \in L^\infty(0,T)$ is uniformly bounded and
\begin{align*}
\int_0^T\left|\frac{\mathrm{d}}{\mathrm{d}t}\langle u^{D_n}\rangle_\Omega\right|^2\,\mathrm{d}t
& \leq \frac{1}{|\Omega|^2}\int_0^T \left|\int_\Gamma q^{D_n}(t)\,\mathrm{d}S \right|^2\,\mathrm{d}t\non\\
&\leq C\int_0^T \big(\|u^{D_n}(t)\|_{L^2(\Gamma)}^2+\|v^{D_n}(t)\|_{L^2(\Gamma)}^2\big) \,\mathrm{d}t\\
&\leq C\int_0^T \big(\|u^{D_n}(t)\|_{H^1(\Omega)}^2+\|v^{D_n}(t)\|_{L^2(\Gamma)}^2\big) \,\mathrm{d}t \\
&\leq C(T),\qquad \forall\,  D_n\geq 1.
\end{align*}
Following the argument in \cite{AK20}, we can deduce that $u(t)\in \mathbb{R}$ for all $t\in[0,T]$ and  $\langle u^{D_n}\rangle_\Omega$ are uniformly bounded in $H^1(0,T)$. Thanks to the compact embedding $H^1(0,T)\hookrightarrow\hookrightarrow C([0,T])$, we get
\begin{align}
 \|\langle u^{D_n} \rangle_\Omega\to u\|_{C([0,T])}\to 0,\qquad \text{as}\ \ D_n\to +\infty.\label{ucmean}
\end{align}
By the trace theorem and Poincar\'e's inequality, we find
\begin{align}
 \int_0^T \|u^{D_n}-u\|_{H^\frac12(\Gamma)}^2 \,\mathrm{d}t
 &\leq C \int_0^T \|u^{D_n}-u\|_{H^1(\Omega)}^2 \,\mathrm{d}t\non\\
 &\leq C\int_0^T\big(\|u^{D_n}-\langle u^{D_n}\rangle\|_{H^1(\Omega)}^2+  |\Omega|| \langle u^{D_n}\rangle-u|^2\big) \,\mathrm{d}t\non\\
 &\leq C\int_0^T\big(\|\nabla u^{D_n} \|_{\bm{L}^2(\Omega)}^2+ |\langle u^{D_n}\rangle -u|^2\big) \,\mathrm{d}t\non\\
 &\to 0,\qquad \text{as}\ \ D_n\to +\infty,\non
 \end{align}
which implies that
$$
u^{D_n}\to u\quad \text{strongly in }\ \ L^2(0,T;H^\frac12(\Gamma)).
$$
For $v^{D_n}$, the Aubin-Lions lemma also yields
\begin{align*}
& v^{D_n} \to v\ \ \text{strongly in}\ \ L^2(0,T;H^\frac12(\Gamma)),\quad \text{as}\ \ D_n\to +\infty,
\end{align*}
Thus, thanks to the assumption \textbf{(H4)} and the continuous property of Nemytskii operators (see e.g., \cite[Theorem 1.27]{RoT05}), we can conclude
\begin{align*}
q^{D_n}\to B_1u -B_1\widetilde{h}(u)v-B_2 v\quad \text{strongly in}\ \ L^2(0,T;L^2(\Gamma)).
\end{align*}

Based on the above results, we are able to pass to the limit as $D_n\to +\infty$ (up to a subsequence) in the equations for $(u^{D_n},\varphi^{D_n},\mu^{D_n},v^{D_n},\eta^{D_n})$
(e.g., in the weak formulations  \eqref{w1.a}--\eqref{w1.f} with $D=D_n$).
In particular, in \eqref{w1.a}, we should take the test function $\xi\in L^2(0,T)$ to be spatially constant. Concerning the initial data, from \cite[Lemma 3.1.7]{Z04}, we can conclude
\begin{align*}
& \vp^{D_n}(0) \rightharpoonup \vp(0)\ \ \text{and}\ \ v^{D_n}(0) \rightharpoonup v(0)\quad \text{weakly in}\ \ (H^1(\Gamma))'.
\end{align*}
Since $\vp,\,v\in C([0,T];L^2(\Gamma))$, then their initial values fulfil $\vp(\cdot,0)=\vp_0(\cdot)$ and $v(\cdot,0)=v_0(\cdot)$ in $L^2(\Gamma)$.
Moreover, it easily follows from \eqref{ucmean} that $u(0)=\langle u_0 \rangle_\Omega$ is indeed a constant.

In summary, the limit function $(u,\vp,v,\mu,\eta)$ is a weak solution to the reduced problem \eqref{r1.a}--\eqref{rini2} on $[0,T]$ in the sense of Definition \ref{def:rweak} with now $u|_{t=0}=\langle u_0 \rangle_\Omega$.

The proof of Proposition \ref{prop:LD} is complete.
\end{proof}

\medskip

\textbf{Proof of Theorem \ref{thm:rweak}}. The proof consists of several steps. \smallskip

\textbf{Step 1. Existence via large cytosolic diffusion limit}.
For any given initial data $(u_0,\,\vp_0,\,v_0)$ satisfying \eqref{rcini1}--\eqref{rcini2}, we consider the full bulk-surface coupled system \eqref{1.a}--\eqref{1.b} subject to the same initial conditions, with $D>0$ and $q$ taking the modified mass exchange term \eqref{q2m} under the assumption \textbf{(H4)}. In particular, in the cut-off function $\widetilde{h}$, we assume that
\begin{align}
h_0=\frac{2M}{|\Omega|}.\label{h0}
\end{align}
Let $T>0$ be fixed. Thanks to Theorem \ref{thm:weak}-(1), for every $D>0$, problem \eqref{1.a}--\eqref{ini2} admits a weak solution on $[0,T]$ in the sense of Definition \ref{def:weak}, which is denoted by $ (u^{D},\varphi^{D},\mu^{D},v^{D},\eta^{D})$. Next, according to Proposition \ref{prop:LD}, we can find an unbounded increasing sequence $\{D_n\}_{n\in\mathbb{Z}^+}$ with $D_n>0$ such that
$\{(u^{D_n},\varphi^{D_n},\mu^{D_n},v^{D_n},\eta^{D_n})\}$ is convergent and its limit $(u,\vp,v,\mu,\eta)$ is a weak solution to the reduced problem \eqref{r1.a}--\eqref{rini2} (with the same mass exchange term \eqref{q2m}) on $[0,T]$ in the sense of Definition \ref{def:rweak} subject to the same initial data (note that here $u_0$ is a constant given by \eqref{rcini2}).

Recall the key observation in \cite[Section 3.1]{GKRR16} (see also \cite[(2.24)]{AK20}), we infer from \eqref{r1.a} and \eqref{q2m}  that
\begin{align}
\frac{\mathrm{d}}{\mathrm{d}t} u(t) =
-\frac{B_1|\Gamma|}{|\Omega|} u(t) +  \big[B_1\widetilde{h}(u(t))+B_2\big]\Big(\frac{M}{|\Omega|}-u(t)\Big).
\label{umODE}
\end{align}
Thanks to the assumption \textbf{(H4)}, the above equation implies that if $0\leq u_0\leq M/|\Omega|$, then
\begin{align}
u(t)\in \Big[0,\ \frac{M}{|\Omega|}\Big],\qquad \forall\, t\in [0,T]. \label{Lifu}
\end{align}
Hence, by our choice \eqref{h0}, we find that $(u,\vp,v,\mu,\eta)$ is indeed a weak solution to the reduced problem \eqref{r1.a}--\eqref{rini2} with the original mass exchange term \eqref{q2} on $[0,T]$. \medskip

\textbf{Step 2. Uniqueness}.
To prove  uniqueness, we adapt the argument in Section \ref{sec:uniq}.
Let $(u_i,\varphi_i,v_i,\mu_i,\eta_i)$, $i=1,2$, be two global weak solutions to problem \eqref{r1.a}--\eqref{rini2} on $[0, T]$ subject to the corresponding initial data $(u_{0i},\varphi_{0i},v_{0i})$, $i=1,2$.
Consider the difference of two solutions:
$$
u=u_1-u_2,\qquad \varphi=\varphi_1-\varphi_2,\qquad v=v_1-v_2.
$$
We note that
\begin{align}
u=-\frac{1}{|\Omega|} \int_\Gamma v\,\mathrm{d}S \qquad \text{yields} \qquad |u|\leq \frac{|\Gamma|^\frac12}{|\Omega|}\|v\|_{L^2(\Gamma)}. \label{duu}
\end{align}
Thus, it holds
\begin{align}
\|q_1-q_2\|_{L^2(\Gamma)}
&\leq C\big(\|u\|_{L^2(\Gamma)}+ \|v\|_{L^2(\Gamma)}
+ |u|\|v_1\|_{L^2(\Gamma)}+ |u_2|\|v\|_{L^2(\Gamma)}\big)
\leq C\|v\|_{L^2(\Gamma)},\non
\end{align}
where $C>0$ depends on $M$, $\Omega$, $\Gamma$ and $\|v_1\|_{L^2(0,T;L^2(\Gamma))}$.
Taking $(-\Delta_\Gamma)^{-1} (\varphi-\langle \vp\rangle_\Gamma)$ and $\delta v$ as test functions for the equations of the differences $\vp$ and $v$, respectively, by a similar argument for \eqref{uniq2}, we get
\begin{align}
&\frac12\frac{\mathrm{d}}{\mathrm{d}t}\left(\|\varphi-\langle \vp\rangle_\Gamma \|_{(H^1(\Gamma))'}^2+ \delta\|v\|_{L^2(\Gamma)}^2\right)
+ \|\nabla_\Gamma \vp\|_{\bm{L}^2(\Gamma)}^2 + 4 \|\nabla_\Gamma v\|_{\bm{L}^2(\Gamma)}^2
\non\\
&\quad = - \int_\Gamma \left(\int_0^1W''(s\vp_1+(1-s)\vp_2) \varphi \,\mathrm{d}s\right)\vp\,\mathrm{d}S
+ \langle \vp\rangle_\Gamma\int_\Gamma (W'(\vp_1)-W'(\vp_2)) \,\mathrm{d}S
 \non\\
&\qquad +\frac{1}{\delta}\int_\Gamma (2v-\vp)(\varphi-\langle \vp\rangle_\Gamma)\,\mathrm{d}S +2\int_\Gamma \nabla_\Gamma\varphi\cdot\nabla_\Gamma v\,\mathrm{d}S + \delta \int_\Gamma (q_1-q_2)v\,\mathrm{d}S\non\\
&\quad \leq  (\theta_0-\theta)\|\vp\|_{L^2(\Gamma)}^2
+ \big(\|W'(\vp_1)\|_{L^1(\Gamma)}+\|W'(\vp_2)\|_{L^1(\Gamma)}\big)|\langle \vp\rangle_\Gamma|
+ \frac{2}{\delta}\|v\|_{L^2(\Gamma)}\|\vp\|_{L^2(\Gamma)}
\non\\
&\qquad + 2\|\nabla_\Gamma\varphi\|_{\bm{L}^2(\Gamma)}\|\nabla_\Gamma v\|_{\bm{L}^2(\Gamma)} + C\|v\|_{L^2(\Gamma)}^2 \non\\
&\quad \leq   \frac34 \|\nabla_\Gamma\varphi\|_{\bm{L}^2(\Gamma)}^2
+ 2\|\nabla_\Gamma v\|_{\bm{L}^2(\Gamma)}^2
+ C\big(\|\varphi-\langle \vp\rangle_\Gamma\|_{(H^1(\Gamma))'}^2+ |\langle \vp\rangle_\Gamma|^2
+ \|v\|_{L^2(\Gamma)}^2\big)
 \non\\
&\qquad
+ \big(\|W'(\vp_1)\|_{L^1(\Gamma)}+\|W'(\vp_2)\|_{L^1(\Gamma)}\big)|\langle \vp\rangle_\Gamma|,
\label{uniq2a}
\end{align}
We recall that
$\langle \vp(t)\rangle_\Gamma= \langle\varphi_{01}-\varphi_{02}\rangle_\Gamma$ and $\|W'(\vp_i)\|_{L^1(\Gamma)}\in L^1(0,T)$ for weak solutions.
Then applying Gronwall's lemma, we find
\begin{align}
&\|\varphi(t) \|_{(H^1(\Gamma))'}^2+ \delta\|v(t)\|_{L^2(\Gamma)}^2 + \int_0^t\big( \|\nabla_\Gamma \vp(\tau)\|_{\bm{L}^2(\Gamma)}^2 +  \|\nabla_\Gamma v(\tau)\|_{\bm{L}^2(\Gamma)}^2\big)\,\mathrm{d}\tau\non\\
& \quad \leq C(T)\left(\|\varphi(0)\|_{(H^1(\Gamma))'}^2+ \delta\|v(0)\|_{L^2(\Gamma)}^2+ |\langle \vp(0)\rangle_\Gamma|\right),\qquad \forall\,t\in[0,T],
\label{conti1a}
\end{align}
which  yields the uniqueness of $(\vp,v)$. Then it follows from \eqref{duu} that $u$ is also unique. \medskip

\textbf{Step 3. Uniform-in-time estimates}. Since in the previous steps the final time $T>0$ is arbitrary, thanks to the uniqueness result, we can uniquely extend the weak solution on the finite interval to the positive half line $[0,+\infty)$.

Let us now derive uniform-in-time estimates for the global weak solution $(u,\vp,v,\mu,\eta)$. First, the estimate \eqref{Lifu} holds for all $t\geq 0$ and this implies
\begin{align}
\int_\Gamma v(t)\,\mathrm{d}S \in \left[0,  M \right],\qquad \forall\, t\geq 0. \label{Lifv}
\end{align}
Besides, we have
\begin{align}
\|\varphi\|_{L^\infty(0,+\infty;L^\infty(\Gamma))}\leq 1.\label{Lifvp}
\end{align}
Testing \eqref{rew1.c} by $\mu$ and \eqref{rew1.e} by $(2/\delta)(2v-\vp)$, adding the resultants together, we get
\begin{align}
& \frac{\mathrm{d}}{\mathrm{d}t} \Big(\frac12\|\nabla_\Gamma \vp\|_{\bm{L}^2(\Gamma)}^2 +\int_\Gamma W(\vp)\,\mathrm{d}S + \frac{2}{\delta} \|v\|_{L^2(\Gamma)}^2 -\frac{2}{\delta} \int_\Gamma \vp v\,\mathrm{d}S +\frac{1}{2\delta}\|\vp\|_{L^2(\Gamma)}^2\Big)
\non\\
& \qquad + \|\nabla_\Gamma \mu\|^2_{\bm{L}^2(\Gamma)} + \frac{4}{\delta^2}\|\nabla_\Gamma(2v-\vp)\|_{\bm{L}^2(\Gamma)}^2\non\\
&\quad = \frac{2}{\delta} \int_\Gamma (2v-\vp)\big[B_1u(1-v)-B_2v\big]\,\mathrm{d}S.
\label{unirE1}
\end{align}
Using the assumption $B_1,B_2>0$, the estimate for $u$, \eqref{Lifv} and \eqref{Lifvp}, Poincar\'e's inequality and Young's inequality, we can handle the right-hand side of \eqref{unirE1} as follows
\begin{align}
& \frac{2}{\delta} \int_\Gamma (2v-\vp)\big[B_1u(1-v)-B_2v\big]\,\mathrm{d}S\non\\
&\quad = \frac{4B_1}{\delta} u \int_\Gamma v\,\mathrm{d}S - \frac{4B_1}{\delta} u \int_\Gamma v^2 \,\mathrm{d}S - \frac{4B_2}{\delta} \int_\Gamma v^2 \,\mathrm{d}S  -\frac{2}{\delta}\int_\Gamma \vp\big[B_1u(1-v)-B_2v\big]\,\mathrm{d}S\non\\
&\quad \leq \frac{4B_1}{\delta|\Omega|} \Big(M- \int_\Gamma v\,\mathrm{d}S\Big) \int_\Gamma v\,\mathrm{d}S +
\frac{2B_1|\Gamma|}{\delta}u + \frac{2}{\delta}(B_1u+B_2)\int_\Gamma |v| \,\mathrm{d}S \non\\
&\quad \leq \frac{B_1M^2}{\delta|\Omega|} +  \frac{2B_1M|\Gamma|}{\delta|\Omega|} + \frac{|\Gamma|}{\delta}\Big(\frac{B_1M}{|\Omega|}+B_2\Big)
+ \frac{1}{\delta}\Big(\frac{B_1M}{|\Omega|}+B_2\Big)\int_\Gamma |2v-\vp| \,\mathrm{d}S\non\\
&\quad \leq C_1+ C_1 \|2v-\vp\|_{L^1(\Gamma)}\non\\
&\quad \leq C_1+ C_2 \Big(\|\nabla_\Gamma(2v-\vp)\|_{\bm{L}^2(\Gamma)}+ \Big|\int_\Gamma (2v-\vp)\,\mathrm{d}S\Big|\Big)\non\\
&\quad \leq  \frac{2}{\delta^2}\|\nabla_\Gamma(2v-\vp)\|_{\bm{L}^2(\Gamma)}^2 + C_3,\label{unirE2}
\end{align}
where the positive constants $C_1,C_2,C_3$ only depend on $M$, $B_1$, $B_2$, $\Omega$, $\Gamma$ and $\delta$.

Next, testing \eqref{rew1.c} by $(-\Delta_\Gamma)^{-1}(\vp-\langle\vp\rangle_\Gamma)$, we get
\begin{align}
& \frac12\frac{\mathrm{d}}{\mathrm{d}t} \|\vp-\langle\vp\rangle_\Gamma\|_{(H^1(\Gamma))'}^2 +\|\nabla_\Gamma \vp\|_{\bm{L}^2(\Gamma)}^2 + \int_\Gamma F'(\vp)(\vp-\langle\vp\rangle_\Gamma)\,\mathrm{d}S- \frac{2}{\delta}\int_\Gamma \vp v\,\mathrm{d}S \non\\
&\quad = \left(\theta_0-\frac{1}{\delta}\right)\int_\Gamma \vp(\vp-\langle\vp\rangle_\Gamma)\,\mathrm{d}S + \frac{2}{\delta} \langle\vp\rangle_\Gamma \int_\Gamma  v\,\mathrm{d}S\non\\
&\quad \leq \left|\theta_0-\frac{1}{\delta}\right|\|\vp-\langle\vp\rangle_\Gamma\|_{L^1(\Gamma)}
+ \frac{2M}{\delta}\non\\
&\quad \leq C_4 \|\nabla_\Gamma \vp\|_{\bm{L}^2(\Gamma)}  + \frac{2M}{\delta}\non\\
&\quad \leq \frac12 \|\nabla_\Gamma \vp\|_{\bm{L}^2(\Gamma)}^2 +C_5,
\label{unirE3}
\end{align}
where the positive constants $C_4,C_5$ only depend on $M$, $\theta_0$, $\Gamma$ and $\delta$.
By virtue of the assumption \textbf{(H1)}, we have
$$
F(r)\leq F(s)+F'(r)(r-s)-\frac{\theta}{2}(r-s)^2,\qquad \forall\,r,s\in (-1,1),
$$
which combined with the mass conservation $\langle\varphi \rangle_\Gamma=\langle\varphi_0 \rangle_\Gamma\in (-1,1)$ implies
\begin{align}
\int_\Gamma F  (\varphi)\,\mathrm{d}S
& \leq |\Gamma|F (\langle\varphi_0 \rangle_\Gamma) + \int_\Gamma  F'(\varphi) (\varphi - \langle\varphi \rangle_\Gamma)\,\mathrm{d}S  - \frac{\theta}{2} \|\varphi - \langle\varphi \rangle_\Gamma\|_{L^2(\Gamma)}^2.
\label{FFb}
\end{align}
Then we infer from \eqref{unirE3}, \eqref{FFb} and \eqref{Lifvp} that
\begin{align}
& \frac12\frac{\mathrm{d}}{\mathrm{d}t} \|\vp-\langle\vp\rangle_\Gamma\|_{(H^1(\Gamma))'}^2 +\frac12 \|\nabla_\Gamma \vp\|_{\bm{L}^2(\Gamma)}^2 + \int_\Gamma F  (\varphi)\,\mathrm{d}S - \frac{2}{\delta}\int_\Gamma \vp v\,\mathrm{d}S + \frac{\theta}{2} \|\varphi - \langle\varphi \rangle_\Gamma\|_{L^2(\Gamma)}^2\non\\
&\quad \leq C_5 + |\Gamma|F (\langle\varphi_0 \rangle_\Gamma) \leq C_6,
\label{unirE4}
\end{align}
where $C_6>0$ only depend on $M$, $\theta_0$, $\Gamma$, $\delta$ and $\langle\varphi_0 \rangle_\Gamma$.

Combining \eqref{unirE1}, \eqref{unirE2} and \eqref{unirE4}, we find the following differential inequality
\begin{align}
& \frac{\mathrm{d}}{\mathrm{d}t} \mathcal{G}(\vp,v) +  C_7\mathcal{G}(\vp,v)+ \|\nabla_\Gamma \mu\|^2_{\bm{L}^2(\Gamma)} + \frac{2}{\delta^2}\|\nabla_\Gamma(2v-\vp)\|_{\bm{L}^2(\Gamma)}^2  \leq C_8,
\label{unirE5}
\end{align}
where
\begin{align}
 \mathcal{G}(\vp,v)
 &= \frac12\|\nabla_\Gamma \vp\|_{\bm{L}^2(\Gamma)}^2
 +\int_\Gamma W(\vp)\,\mathrm{d}S
 + \frac{2}{\delta} \|v\|_{L^2(\Gamma)}^2
 -\frac{2}{\delta} \int_\Gamma \vp v\,\mathrm{d}S\non\\
 &\quad  +\frac{1}{2\delta}\|\vp\|_{L^2(\Gamma)}^2
 + \frac12\|\vp-\langle\vp\rangle_\Gamma\|_{(H^1(\Gamma))'}^2,
\label{G}
\end{align}
$C_7, C_8$ are positive constants that may depend on $M$, $B_1$, $B_2$, $\theta$, $\theta_0$, $\Omega$, $\Gamma$, $\delta$  and $ \langle\varphi_0 \rangle_\Gamma$. Then we obtain the
the dissipative estimate
\begin{align}
 &\mathcal{G}(\vp(t),v(t)) +\int_{t}^{t+1} \big(\|\nabla_\Gamma \mu(\tau)\|^2_{\bm{L}^2(\Gamma)} + \|\nabla_\Gamma(2v(\tau)-\vp(\tau))\|_{\bm{L}^2(\Gamma)}^2\,\mathrm{d}\tau\non\\
 &\quad \leq \mathcal{G}(\vp_0,v_0)e^{-C_7 t} + C_9,\qquad  \forall\, t\geq 0, \label{es-diss1}
\end{align}
where the positive constant $C_9$ only depends on $C_7, C_8$. From the assumption \textbf{(H1)} and \eqref{Lifvp}, we find that
\begin{align}
\mathcal{G}(\vp,v)\geq C_{10}\big(\|\vp\|_{H^1(\Gamma)}^2+ \|v\|_{L^2(\Gamma)}^2\big)-C_{11}, \label{es-below}
\end{align}
where $C_{10}, C_{11}$ are positive constants that may depend on $\theta$, $\Gamma$, $\delta$ and $\min_{r\in[-1,1]}F(r)$. Form \eqref{es-diss1} and \eqref{es-below} we arrive at the conclusion \eqref{es-dissA}.

The proof of Theorem \ref{thm:rweak} is complete.
\hfill $\square$.

\subsection{Existence of the global attractor}

First, we show that the unique global weak solution to the reduced system
\eqref{nr1.c}--\eqref{nr1.f} generates a dynamical system on the phase space $\mathcal{V}_{M,m}$. \medskip

  \textbf{Proof of Corollary \ref{cor:DS}}.  For any initial datum $(\vp_0,v_0)\in \mathcal{V}_{M,m}$, it follows from Theorem \ref{thm:rweak} that the reduced system
\eqref{nr1.c}--\eqref{nr1.f} admits a unique global weak solution $(\vp,v,\mu,\eta)$ in the sense of Definition \ref{def:weak} (eliminating the variable $u$ therein). Thus, the operator
$$
\mathcal{S}(t):\ \mathcal{V}_{M,m}\to \mathcal{V}_{M,m} \quad \text{such that}\quad \mathcal{S}(t)(\vp_0,v_0)=(\vp,v),\quad \forall\, t\geq 0,
$$
is well defined.
Moreover, we find
\begin{itemize}
\item[(1)] $\mathcal{S}(0)=\mathcal{I}$, i.e., the identity operator;
\item[(2)] $\mathcal{S}(t+\tau)=\mathcal{S}(t)\mathcal{S}(\tau)$ for any $t,\tau\geq 0$, thanks to the uniqueness of weak solutions for the autonomous system;
\item[(3)]  $t\to \mathcal{S}(t)(\vp_0,v_0)\in C([0,+\infty);H^1(\Gamma)\times L^2(\Gamma))$, i.e., the strong continuity in time.
\end{itemize}

   Below we verify the property \eqref{conti}. The case $t=0$ is obvious. To show the estimate for $t>0$, we need the following regularity result:
\begin{lemma}\label{lem:comp}
Let the assumptions of Theorem \ref{thm:rweak} be satisfied.
For any $t_0>0$, we have
\begin{align}
& \|\varphi\|_{ L^{\infty}(t_0,+\infty;H^3(\Gamma))}
+ \|\mu\|_{ L^\infty(t_0,+\infty;H^1(\Gamma))} +\|v\|_{L^{\infty}(t_0,+\infty;H^1(\Gamma))}\leq C(t_0),
\label{nunit-es1}
\end{align}
with some $C(t_0)>0$ and the strict separation property
\begin{align}
\|\vp(t)\|_{C(\Gamma)}\leq 1-\sigma_{t_0},\qquad \forall\,t\geq t_0,
\label{nvp-sep0}
\end{align}
with some $\sigma_{t_0}\in (0,1)$. Both constants $C(t_0)$ and $\sigma_{t_0}$ may depend on $\|\varphi_0\|_{H^1(\Gamma)}$, $\|v_0\|_{L^2(\Gamma)}$, $\int_\Gamma F(\varphi_0)\,\mathrm{d}S$, $\langle \varphi_0\rangle_\Gamma$, $\Omega$, $\Gamma$, coefficients of the system and $t_0$, but are independent of time $t$.
\end{lemma}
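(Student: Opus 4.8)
The plan is to obtain Lemma~\ref{lem:comp} by transferring the uniform-in-time regularity already established for the full bulk-surface system (Theorem~\ref{thm:reg}-(2) and Proposition~\ref{pro:str}-(2)) to the reduced system, combined with the dissipative estimate \eqref{es-dissA} from Theorem~\ref{thm:rweak}. First I would observe that, since in the reduced system $u(t)$ solves the ODE \eqref{umODE} (equivalently \eqref{r1.a} with $q$ given by \eqref{nq2}) and satisfies the uniform bound \eqref{bdd-u}, the mass exchange term $q(v)$ in \eqref{nq2} is globally Lipschitz continuous in $v$ and uniformly bounded along the evolution. Therefore all the higher-order energy estimates carried out in Section~\ref{sec:rec} for the regularized Galerkin approximations of the full system — in particular \eqref{esmvH1b}, \eqref{dtue}, and the application of the uniform Gronwall lemma — go through verbatim in the reduced setting, with $u_\kappa^n$ now replaced by its spatially-constant counterpart (so that $\nabla u$-terms simply vanish and the estimates only simplify). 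The role of the uniform-in-time lower-order bound \eqref{ues2a} is here played by the dissipative estimate \eqref{es-dissA}, which provides exactly the control on $\sup_{t\geq 0}(\|\vp(t)\|_{H^1(\Gamma)}^2+\|v(t)\|_{L^2(\Gamma)}^2)$ and the integrated bound $\int_t^{t+1}(\|\nabla_\Gamma\mu(\tau)\|_{\bm L^2(\Gamma)}^2+\|\nabla_\Gamma(2v-\vp)(\tau)\|_{\bm L^2(\Gamma)}^2)\,\mathrm d\tau$ needed to start the uniform Gronwall argument.

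The key steps, in order, are as follows. Step 1: record that $\|v\|_{L^\infty(0,+\infty;L^2(\Gamma))}$, $\|\vp\|_{L^\infty(0,+\infty;H^1(\Gamma))}$ are bounded by \eqref{es-dissA}, and that $\int_t^{t+1}\|\nabla_\Gamma\mu(\tau)\|_{\bm L^2(\Gamma)}^2\,\mathrm d\tau\le C$ for all $t\ge0$; using $\langle\vp\rangle_\Gamma=\langle\vp_0\rangle_\Gamma$ together with \eqref{SSSe}--\eqref{SSSe1} (valid with $F$ in place of $F_\kappa$ once we are past the regularization, or applied at the level of the approximation) upgrade this to $\int_t^{t+1}\|\mu(\tau)\|_{H^1(\Gamma)}^2\,\mathrm d\tau\le C$ and, via \eqref{vp-H2}, $\int_t^{t+1}\|\vp(\tau)\|_{H^2(\Gamma)}^2\,\mathrm d\tau\le C$. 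Step 2: test \eqref{nr1.c} by $\partial_t\mu$ and test the $v$-equation \eqref{nr1.e} by $-\Delta_\Gamma v$ to reproduce the differential inequality \eqref{esmvH1b} (now with all $u$-gradient terms absent and with $\|q(v)\|_{L^2(\Gamma)}\le C$ by \eqref{bdd-u}); the right-hand side is uniformly integrable over unit time windows by Step~1. Step 3: apply the uniform Gronwall lemma (\cite[Chapter III, Lemma 1.1]{T}) exactly as in \eqref{es-unitime2}--\eqref{es-unitime3b} to deduce $\|\nabla_\Gamma\mu(t)\|_{\bm L^2(\Gamma)}+\|\nabla_\Gamma v(t)\|_{\bm L^2(\Gamma)}\le C$ for $t\ge t_0/2$, hence $\|\mu(t)\|_{H^1(\Gamma)}\le C$ and (via \eqref{vp-H2} again) $\|\vp(t)\|_{H^2(\Gamma)}\le C$ for $t\ge t_0$; this gives the $v\in L^\infty(t_0,+\infty;H^1(\Gamma))$ and $\mu\in L^\infty(t_0,+\infty;H^1(\Gamma))$ parts of \eqref{nunit-es1}. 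Step 4: since $\mu,\vp,v\in L^\infty(t_0,+\infty;H^1(\Gamma))$, the elliptic equation \eqref{ellp1} (with $\eta$ expressed through \eqref{nr1.f}) together with Lemma~\ref{lem:sep}-(2) yields $\|\vp(t)\|_{W^{2,p}(\Gamma)}\le C$ for all $p<+\infty$ and the strict separation \eqref{nvp-sep0} with some $\sigma_{t_0}\in(0,1)$; the separation then promotes $W'(\vp)$ to $L^\infty(t_0,+\infty;H^2(\Gamma))$ and the surface elliptic theory applied once more to \eqref{ellp1} gives $\vp\in L^\infty(t_0,+\infty;H^3(\Gamma))$, completing \eqref{nunit-es1}.

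The main obstacle is purely bookkeeping rather than conceptual: one must be careful that the estimates of Section~\ref{sec:rec} were derived at the level of the Faedo--Galerkin/regularized approximation, so strictly speaking the cleanest route is to re-run that approximation scheme for the reduced system (which is licit because Proposition~\ref{prop:LD} already produces the reduced weak solution as a large-diffusion limit of full-system weak solutions, each of which enjoys those approximation-level bounds), pass to the limit, and only then invoke the strict separation property to close the higher regularity. Alternatively, since Theorem~\ref{thm:rweak} already guarantees uniqueness and the dissipative estimate \eqref{es-dissA}, one can work directly with the weak solution using difference quotients in time (as in the proof of Proposition~\ref{pro:str}-(1), the step establishing $\partial_t\mu\in L^2(0,T;(H^1(\Gamma))')$) to justify testing by $\partial_t\mu$. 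Either way the estimates themselves are strictly simpler than in the full-system case because there is no bulk diffusion equation to handle; the only genuinely necessary new input beyond Section~\ref{sec:rec} is the dissipative bound \eqref{es-dissA}, which replaces the decreasing-energy bound \eqref{ues2a} used in the equilibrium case of the full system.
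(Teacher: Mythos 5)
Your proposal is correct and follows essentially the same route as the paper: the paper's own proof of Lemma \ref{lem:comp} consists precisely of invoking the dissipative estimate \eqref{es-dissA} as the uniform-in-time lower-order input and then repeating the higher-order argument of Section \ref{sec:rec} (testing by $\partial_t\mu$ and $-\Delta_\Gamma v$, uniform Gronwall, then Lemma \ref{lem:sep} for separation and $H^3$-regularity), with the details omitted. Your write-up supplies exactly those omitted details, including the correct observation that the bulk terms disappear and that $q(v)$ is uniformly bounded via \eqref{bdd-u} and the $L^2$-bound on $v$.
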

\begin{proof}
Based on the uniform-in-time lower-order estimate \eqref{es-dissA}, we can prove the conclusions \eqref{nunit-es1} and \eqref{nvp-sep0} by an argument analogous to that for the full bulk-surface coupled system \eqref{1.a}--\eqref{1.f} done in Section \ref{sec:rec} and thus we omit the details here.
\end{proof}

Following the argument in \cite{GGM2017}, for any given initial datum $(\vp_0,v_0)\in \mathcal{V}_{M,m}$, we  consider the sequence $\big\{(\vp_0^{(n)},v_0^{(n)})\big\}\subset  \mathcal{V}_{M,m}$ such that
$$
\lim_{n\to +\infty}\mathrm{dist}_{\mathcal{V}_{M,m}} \big((\vp_0^{(n)},v_0^{(n)}),(\vp_0,v_0)\big)=0.
$$
Fix $\widehat{t}>0$. In light of Lemma \ref{lem:comp}, the corresponding global weak solutions denoted by
$$
(\vp(t),v(t))=\mathcal{S}(t)(\vp_0,v_0)\quad \text{and}\quad (\vp^{(n)}(t),v^{(n)}(t))=\mathcal{S}(t)(\vp_0^{(n)},v_0^{(n)})
$$
satisfy
\begin{align*}
& \|\varphi(\widehat{t})\|_{H^3(\Gamma)} +\|v(\widehat{t})\|_{H^1(\Gamma)}+  \|\varphi^{(n)}(\widehat{t})\|_{ H^3(\Gamma)} +\|v^{(n)}(\widehat{t})\|_{H^1(\Gamma)}\leq C,
\end{align*}
where $C>0$ is independent of $n$. Then by interpolation and the continuous dependence estimate \eqref{conti1a}, we have
\begin{align}
&\mathrm{dist}_{\mathcal{V}_{M,m}} \big(\big(\vp^{(n)}(\widehat{t}),v^{(n)}(\widehat{t})\big),\big(\vp(\widehat{t}),v(\widehat{t})\big)\big)\non\\
&\quad \leq C\big(\|\vp^{(n)}(\widehat{t})-\vp(\widehat{t})\|_{H^3(\Gamma)}^\frac12 \|\vp^{(n)}(\widehat{t})-\vp(\widehat{t})\|_{(H^1(\Gamma))'}^\frac12 +
\|v^{(n)}(\widehat{t})-v(\widehat{t})\|_{L^2(\Gamma)}\big)\non\\
&\quad \leq C \big(\|\vp_0^{(n)}-\vp_0\|_{(H^1(\Gamma))'}^\frac12+ \delta\|v_0^{(n)}-v_0\|_{L^2(\Gamma)}^\frac12+ |\langle \vp_0^{(n)}-\vp_0\rangle_\Gamma|^\frac14\big)\non\\
&\qquad + \big(\|\vp_0^{(n)}-\vp_0\|_{(H^1(\Gamma))'} + \delta\|v_0^{(n)}-v_0\|_{L^2(\Gamma)} + |\langle \vp_0^{(n)}-\vp_0\rangle_\Gamma|^\frac12\big)\non\\
&\quad \to 0,\qquad \text{as}\ n\to +\infty,\non
\end{align}
which yields the conclusion  \eqref{conti}.

The proof of Corollary \ref{cor:DS} is complete.
\hfill $\square$ \medskip

We are now in a position to prove Theorem \ref{thm:att}.\medskip

 \textbf{Proof of Theorem \ref{thm:att}}.
For the dynamical system $(\mathcal{S}(t),\mathcal{V}_{M,m})$, we  infer from the dissipative estimate \eqref{es-dissA} that it
admits a bounded absorbing set.
\begin{lemma}[Absorbing set]\label{lem:abs}
There exists $R_*>0$ such that the closed ball $\mathcal{B}_*\subset \mathcal{V}_{M,m}$ centered at zero with radius $R_*$ is absorbing for the semigroup $\mathcal{S}(t)$. Namely, for every bounded
set $\mathcal{B}\subset \mathcal{V}_{M,m}$, there exists a time
$t_{\mathcal{B}}>0$ such that
$$
\mathcal{S}(t)\mathcal{B}\subset \mathcal{B}_*,\qquad \forall\, t\geq t_{\mathcal{B}}.
$$
\end{lemma}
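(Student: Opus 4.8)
The statement to prove is the existence of a bounded absorbing set in $\mathcal{V}_{M,m}$ for the semigroup $\mathcal{S}(t)$ generated by the reduced system \eqref{nr1.c}--\eqref{nr1.f}. The key input is already in hand: the dissipative estimate \eqref{es-dissA} from Theorem \ref{thm:rweak}, which provides
\begin{align}
\|\vp(t)\|_{H^1(\Gamma)}^2+\|v(t)\|_{L^2(\Gamma)}^2 \leq K_1 e^{-K_2 t}+K_3, \qquad \forall\, t\geq 0,\non
\end{align}
where crucially $K_1$ depends only on the $H^1(\Gamma)\times L^2(\Gamma)$-norm of the initial data (through $\|\vp_0\|_{H^1(\Gamma)}$, $\|v_0\|_{L^2(\Gamma)}$ and $\int_\Gamma F(\vp_0)\,\mathrm{d}S$), while $K_2,K_3$ are universal constants depending only on the structural parameters $M,B_1,B_2,\theta,\theta_0,\Omega,\Gamma,\delta,\langle\vp_0\rangle_\Gamma$ and $\min_{[-1,1]}F$.

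\textbf{Step 1.} First I would set $R_* = (2K_3+1)^{1/2}$ (or any fixed number strictly larger than $\sqrt{K_3}$, adjusted so that the closed ball of that radius still lies in $\mathcal{V}_{M,m}$ after accounting for the $L^\infty$ and mean-value constraints, which are automatically preserved along the flow by \eqref{Lifvp}, \eqref{mass2} and \eqref{Lifv}) and let $\mathcal{B}_*$ be the closed ball in $\mathcal{V}_{M,m}$ centered at the origin with radius $R_*$. Note that membership in $\mathcal{V}_{M,m}$ of $\mathcal{S}(t)(\vp_0,v_0)$ is guaranteed by Corollary \ref{cor:DS}, so only the norm bound needs verification.

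\textbf{Step 2.} Given a bounded set $\mathcal{B}\subset\mathcal{V}_{M,m}$, there is a constant $R_\mathcal{B}>0$ with $\|\vp_0\|_{H^1(\Gamma)}^2+\|v_0\|_{L^2(\Gamma)}^2\leq R_\mathcal{B}^2$ for all $(\vp_0,v_0)\in\mathcal{B}$; since $\|\vp_0\|_{L^\infty(\Gamma)}\leq 1$ and $F$ is continuous on $[-1,1]$, the quantity $\int_\Gamma F(\vp_0)\,\mathrm{d}S$ is bounded by $|\Gamma|\max_{[-1,1]}F$ uniformly on $\mathcal{B}$, hence $K_1$ in \eqref{es-dissA} is bounded by a constant $\widehat{K}_1=\widehat{K}_1(R_\mathcal{B})$ independent of the particular initial datum in $\mathcal{B}$. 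Then for $(\vp(t),v(t))=\mathcal{S}(t)(\vp_0,v_0)$ we have $\|\vp(t)\|_{H^1(\Gamma)}^2+\|v(t)\|_{L^2(\Gamma)}^2\leq \widehat{K}_1 e^{-K_2 t}+K_3$, and choosing
\begin{align}
t_\mathcal{B}=\frac{1}{K_2}\ln^{+}\!\big(\widehat{K}_1\big)\non
\end{align}
(with $\ln^{+}=\max\{\ln,0\}$) gives $\widehat{K}_1 e^{-K_2 t}\leq 1$ for all $t\geq t_\mathcal{B}$, so that $\|\vp(t)\|_{H^1(\Gamma)}^2+\|v(t)\|_{L^2(\Gamma)}^2\leq K_3+1\leq R_*^2$ for all $t\geq t_\mathcal{B}$. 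This shows $\mathcal{S}(t)\mathcal{B}\subset\mathcal{B}_*$ for $t\geq t_\mathcal{B}$, which is exactly the absorbing property.

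\textbf{Main obstacle.} There is essentially no analytic obstacle here, since the hard work was already done in establishing \eqref{es-dissA}; the only point requiring a little care is bookkeeping the dependence of $K_1$ versus $K_2,K_3$ on the data — one must make sure that the absorbing radius $R_*$ is chosen from the data-independent constants $K_2,K_3$ alone (so that it is genuinely universal), while the absorbing time $t_\mathcal{B}$ is allowed to depend on the bounded set $\mathcal{B}$. This is precisely the structure the dissipative estimate was designed to deliver, so the proof amounts to reading off the exponential decay and invoking it. I would close by remarking that $\mathcal{B}_*$ is, in particular, bounded in $\mathcal{V}_{M,m}$, which together with the asymptotic compactness furnished by Lemma \ref{lem:comp} will later yield the global attractor.
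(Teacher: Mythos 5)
Your proof is correct and is precisely the argument the paper intends: Lemma \ref{lem:abs} is stated there as a direct consequence of the dissipative estimate \eqref{es-dissA}, and your bookkeeping of which constants depend on the initial data ($K_1$, controlled on a bounded set via $\|\vp_0\|_{H^1(\Gamma)}$, $\|v_0\|_{L^2(\Gamma)}$ and $\int_\Gamma F(\vp_0)\,\mathrm{d}S\leq |\Gamma|\max_{[-1,1]}|F|$) versus which are structural ($K_2,K_3$) is the whole content. The only point worth a half-sentence in the write-up is that $K_2,K_3$ also depend on $\langle\vp_0\rangle_\Gamma$, which ranges over $[-m,m]$ within $\mathcal{V}_{M,m}$; since $F$ is continuous on $[-1,1]$ and $m<1$, these constants can be taken uniform over that range, so your data-independent radius $R_*$ is indeed available.
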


  Next, thanks to the compact embeddings $H^3(\Gamma)\hookrightarrow\hookrightarrow H^1(\Gamma) \hookrightarrow\hookrightarrow L^2(\Gamma)$, Lemma \ref{lem:comp} implies that for every bounded
set $\mathcal{B}\subset \mathcal{V}_{M,m}$,
$$
\bigcup_{t\geq 1} \mathcal{S}(t)\mathcal{B}\quad\text{is relatively compact in}\ \mathcal{V}_{M,m}.
$$

Hence, from the classical theory of infinite dimensional dynamical systems (see for instance, \cite[Theorem I.1.1]{T}), we can conclude that the dynamical system $(\mathcal{S}(t),\mathcal{V}_{M,m})$ has a global attractor $\mathcal{A}_{M,m}$, which is bounded in $H^3(\Gamma)\times H^1(\Gamma)$. By its definition, the global attractor is unique. Moreover, because of the continuity property  of $\mathcal{S}(t)$,  $\mathcal{A}_{M,m}$ is connected.

The proof of Theorem \ref{thm:att} is complete.
\hfill $\square$

\section*{Acknowledgments}
\noindent The research of Wu was partially supported by National Natural Science Foundation of China under Grant number 12071084.

\section*{Declarations}
\noindent \textbf{Competing interests}. The authors declare that they have no conflict of interest.

\noindent \textbf{Use of AI tools declaration}. The authors declare they have not used Artificial Intelligence (AI) tools in the creation of this article.


\end{document}